\documentclass{amsart}
\usepackage[a4paper, margin=1in]{geometry}
\usepackage{amssymb}
\usepackage[hyphens]{url}
\usepackage{amsmath}
\usepackage{amsthm}
\usepackage{mathrsfs}
\usepackage[title]{appendix}
\usepackage[utf8]{inputenc}
\usepackage[english]{babel}
\usepackage{cite}
\usepackage{hyperref}
\usepackage{cleveref}
\usepackage[section]{placeins}
\RequirePackage{textgreek}
\usepackage{enumitem}
\usepackage{bm}
\usepackage{nicefrac}
\usepackage{mathabx}
\usepackage[all,cmtip]{xy}
\usepackage{caption}
\usepackage{float}
\usepackage{mathtools}
\usepackage{xcolor}
\usepackage{xspace}
\usepackage{stmaryrd}
\usepackage{dsfont}

\hypersetup{
    colorlinks=true,
    linkcolor=blue,
    citecolor=cyan,
    urlcolor=red
}

\hypersetup{linktoc=all}
\numberwithin{equation}{subsection}

\newtheorem{thm}{Theorem}[section]
\newtheorem{lem}[thm]{Lemma}
\newtheorem{prop}[thm]{Proposition}
\newtheorem{cor}[thm]{Corollary}

\newtheorem{defn}[thm]{Definition}

\theoremstyle{nonumberplain}

\newtheorem{prob}[thm]{Problem}

\newtheorem{fct}[thm]{Fact}

\theoremstyle{remark}

\newtheorem{exm}[thm]{Example}

\author{Nachi Avraham-Re'em}
\address{Department of Mathematics, Technion, Israel}
\email{nachi.avraham@gmail.com}

\author{Michael Bj\"{o}rklund}
\address{Department of Mathematics, Chalmers University, G\"{o}teborg, Sweden}
\email{micbjo@chalmers.se}

\thanks{The research was supported by the Knut and Alice Wallenberg Foundation (KAW 2021.0258).}

\title{On stationary actions of locally compact groups and their Radon--Nikodym cocycles}

\subjclass[2020]{Primary 37A20, 22D40; Secondary 37A40}

\keywords{stationary actions, Radon--Nikodym cocycles, positive harmonic functions, Poisson boundaries, compact models}

\begin{document}

\begin{abstract}
We study stationary actions of locally compact measured groups through the structure and regularity of their Radon--Nikodym cocycles.

We start with two dynamical consequences of stationarity. Extending a theorem of Furstenberg--Glasner from discrete groups to noncompact lcsc groups, we show that every stationary action is conservative. Thus stationary actions are never of type \(\mathrm{I}\). We then show that an ergodic stationary action admitting an absolutely continuous invariant \(\sigma\)-finite measure is in fact probability preserving. Thus stationary actions are never of type \(\mathrm{II}_{\infty}\). Using a construction of Katznelson--Weiss and Vaes--Verjans, we show that if a group admits a stationary action of type \(\mathrm{III}_{1}\), then it admits stationary actions of every type \(\mathrm{III}_{\lambda}\), \(0\leq\lambda\leq 1\).

The second part concerns the regularity of the Radon--Nikodym cocycle. We introduce the harmonic majorant on normalized positive harmonic functions, which gives Harnack-type control of Radon--Nikodym derivatives of stationary actions. For compactly supported probability measures with an \(L^{p}\)-density for some \(p>1\), we prove that the harmonic majorant is finite and locally bounded. As a consequence, such measured groups admit a universal compact Radon--Nikodym model: a single compact \(G\)-space with a continuous cocycle into which stationary actions can be realized, so that the ambient cocycle gives a version of its Radon--Nikodym cocycle. This strengthens the Mackey--Varadarajan compact model theorem by incorporating the Radon--Nikodym cocycle into the model.

By contrast, we construct a random walk on the real affine group whose Poisson boundary fails Kaimanovich's \(\mathrm{SAT}^{\ast}\) property: the Poisson kernel is unbounded arbitrarily close to the identity. Therefore, Harnack's inequality already fails for positive harmonic functions. In particular, this Poisson boundary admits no topological model with continuous Poisson kernel.
\end{abstract}

\maketitle

\tableofcontents

\section{Introduction and main results}

Let \(\left(G,\theta\right)\) be a measured group, that is, a locally compact second countable group equipped with an admissible probability measure. A stationary \(\left(G,\theta\right)\)-space is a probability space \(\left(X,\mu\right)\) such that \(X\) is a measurable \(G\)-space and \(\mu\) satisfies
\[\theta\ast\mu=\mu.\]

Stationary actions were introduced by Furstenberg in the study of random walks and Poisson boundaries, and they provide a natural nonsingular counterpart to measure preserving actions in settings where invariant probability measures need not exist \cite{furstenberg1963poisson,nevo2002,furstenberg2010stationary,benoist2016random}. When \(G\) is discrete countable, a substantial part of the theory is by now classical. In the locally compact setting, however, questions of recurrence, absolutely continuous invariant measures, and regularity of the Radon--Nikodym cocycle become genuinely analytic and topological in nature. The purpose of this paper is to study stationary actions of locally compact groups through the structure and regularity of their Radon--Nikodym cocycles.

\smallskip

We begin with conservativity. Basic aspects of conservativity for nonsingular actions of locally compact groups were clarified recently in \cite{avraham2024hopf}, and one form of this property is Poincar\'{e} recurrence, by which for every positive measure set \(A\) in \(X\), there are unboundedly many \(g\in G\) satisfying \(\mu\left(g.A\cap A\right)>0\). Furstenberg--Glasner proved that stationary actions of countably infinite groups are Poincar\'{e} recurrent \cite[\S1]{furstenberg2013recurrence}. Our first main result extends the recurrence of stationary actions to locally compact groups.

\begin{thm}\label{mthm:conservative}
Every stationary action of a noncompact measured group is conservative.
\end{thm}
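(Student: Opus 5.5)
We argue by contradiction via the Hopf decomposition. Let \((X,\mu)\) be a stationary \((G,\theta)\)-space with \(G\) noncompact. By the Hopf decomposition for nonsingular actions of lcsc groups \cite{avraham2024hopf}, \(X=\mathcal{C}\sqcup\mathcal{D}\) into \(G\)-invariant conservative and dissipative parts. The dissipative part has a wandering structure: up to null sets, \(\mathcal{D}\) is isomorphic to a skew product \(G\times_{K} W\) fibered over a standard space \(W\). More concretely, there is a measurable set \(W\subset\mathcal{D}\) of positive measure such that \(\{g\in G: \mu(gW\cap W)>0\}\) is relatively compact, and \(G.W=\mathcal{D}\). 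Assuming \(\mu(\mathcal{D})>0\), we will contradict \(\theta\ast\mu=\mu\) together with \(\mu(X)=1\).

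\textbf{Step 1 (reduction to a fibered measure).} Since \(\mathcal{D}\) is \(G\)-invariant, the normalized restriction \(\mu|_{\mathcal{D}}\) is still \(\theta\)-stationary. So we may assume \(X=\mathcal{D}\). Disintegrating over the measurable factor \(X\to G\backslash X\cong W'\) (an invariant factor on which \(G\) acts trivially), each fiber measure is again stationary. We are thus reduced to a stationary probability measure \(\nu\) on a single orbit \(G/H\), where the stabilizer \(H\) is compact; dissipativity of a transitive action means precisely that stabilizers are compact. For this reduction we need a measurable selection of the fiber measures, which is standard by Rokhlin disintegration, since the invariant factor is countably separated on the dissipative part.

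\textbf{Step 2 (no stationary probability on \(G/H\), \(H\) compact, \(G\) noncompact).} Pull \(\nu\) back to an \(H\)-right-invariant probability \(\tilde\nu\) on \(G\) with \(\theta\ast\tilde\nu=\tilde\nu\). Since \(\theta\) is admissible, some convolution power \(\theta^{\ast n}\) has an absolutely continuous component. Stationarity then forces \(\tilde\nu\) to be absolutely continuous with respect to Haar measure, with density \(f\in L^{1}(G)\) satisfying \(f=\theta\ast f\). Now apply the Choquet--Deny/Furstenberg-type argument. Because \(\theta\) is admissible, the random walk on \(G\) is either transient or recurrent; on a noncompact group we argue as follows. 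For each compact \(C\subset G\), the mass \(\tilde\nu(C)=\int \theta^{\ast n}\ast\tilde\nu(C)=\int\!\!\int \mathbf{1}_C(gx)\,d\theta^{\ast n}(g)\,d\tilde\nu(x)\). We want to show this tends to \(0\), i.e.\ that \(\theta^{\ast n}(Cx^{-1})\to0\) for every \(x\). This is the concentration-function statement: for an admissible (spread-out) measure on a noncompact lcsc group, \(\sup_{x}\theta^{\ast n}(Cx)\to0\). That is a known result (Derriennic, Mukherjea), proved via the spectral radius of the convolution operator on \(L^{2}\) and, in the amenable case, a strict inequality against translates using noncompactness. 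By dominated convergence, \(\tilde\nu(C)=0\) for all compact \(C\), contradicting \(\tilde\nu(G)=1\).

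\textbf{Main obstacle.} The delicate point is Step 2's uniform decay of concentration functions for spread-out measures on arbitrary noncompact lcsc groups. If it cannot be quoted directly, I would try an alternative: use \(f=\theta\ast f\) with \(f\in L^{1}\) and strict convexity of \(t\mapsto t^{2}\) (after truncation) to show \(f\) is invariant under the closed semigroup generated by \(\operatorname{supp}\theta\), which is all of \(G\) by admissibility. Then \(f\) is constant, hence \(0\) as \(G\) has infinite Haar measure, which is again a contradiction. A secondary technical point is the measurable disintegration in Step 1. To avoid it, one could work directly with the wandering set \(W\) and the function \(\phi(x)=\int \mathbf{1}_W(g.x)\,dm_G(g)\), which is finite on \(\mathcal{D}\); the same decay argument then applies to the pushforward of \(\mu\) under the cocycle-free parametrization \(G\times W\to\mathcal{D}\).
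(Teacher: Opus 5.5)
Your route is sound, but it differs from both proofs in the paper. The paper's first proof never decomposes \(X\). The martingale \(\mu(z_n^{-1}.A)\) eventually stays above \(\mu(A)/2\) with positive probability, Lemma~\ref{lem:escape} supplies \(N>2/\mu(A)\) mutually far-apart times, and pigeonhole then gives \(g\notin K\) with \(\mu(g.A\cap A)>0\). The second proof lifts \(P_\theta^\mu\mathbf{1}_{W_r}\), for a transient \(W_r\), to the Poisson boundary and uses ergodicity together with Lemma~\ref{lem:infs}.

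Your Step~1 instead needs a structure theorem for the dissipative part, and ``standard by Rokhlin'' undersells it for lcsc groups. You must actually produce a countably separated invariant factor. One choice is the map \(x\mapsto m_G(R_W(x))^{-1}\int_{R_W(x)}\delta_{g^{-1}.x}\,dm_G(g)\) on \(\{0<m_G(R_W)<\infty\}\), which is \(G\)-invariant, Borel, and separates orbits, since each value is carried by \(G.x\). Compactness of \(G_x\) then follows because \(R_W(x)\) is left \(G_x\)-invariant with positive finite Haar measure. Note also that \(\mathcal{D}\) is in general only a countable union of such pieces, not a single \(G.W\).

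Step~2 is fine. The concentration-function theorem is more than you need: \(\theta^{\ast n}(C)\to0\) for each compact \(C\) already suffices by dominated convergence. Your fallback is fully elementary. The function \(f_c=\min(f,c)\) is superharmonic with the same Haar integral as its image, hence harmonic. Equality in \(\|f_c\|_2^2=\int\langle\lambda(h)f_c,f_c\rangle\,d\theta(h)\) forces \(\lambda(h)f_c=f_c\) for \(\theta\)-a.e.\ \(h\). The stabilizer of \(f_c\) is a closed subgroup, so it contains \(\mathrm{supp}\,\theta\) and equals \(G\) by admissibility. Hence \(f_c=0\), since \(m_G(G)=\infty\).

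That fallback in fact makes Step~1 unnecessary. Take a transient \(W\) with \(\int_X m_G(R_W)\,d\mu<\infty\) (the paper's \(W_r\)) and set \(v(g):=\mu(g.W)\). Then \(v\in L^1\cap L^\infty(G)\) with \(\int_G v\,dm_G=\int_X m_G(R_W)\,d\mu\), and \(\theta\ast\mu=\mu\) gives \(v(g)=\int_G v(h^{-1}g)\,d\theta(h)\). The same \(L^2\) argument yields \(v=0\) a.e., so \(\mu(W)=v(e_G)=0\) by continuity (Fact~\ref{fct:unifcont}(2)).

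This is shorter than either proof in the paper. It avoids the martingale/escape argument as well as the Poisson boundary and Lemma~\ref{lem:infs}. What the Furstenberg--Glasner proof buys in exchange is Poincar\'e recurrence in form (1) directly, without passing through transient sets and the equivalence of criteria from \cite{avraham2024hopf}.
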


Viewed through the Krieger classification of nonsingular actions, this theorem says that ergodic stationary actions of noncompact measured groups are never of type \(\mathrm{I}\). The second structural result concerns absolutely continuous invariant measures. For Poisson boundaries it is well-known that they are absent (see \cite[Theorem 3.2.1]{kaimanovich1995poisson}); we show that this is in fact a general feature of stationary actions.

\begin{thm}\label{mthm:typeIII}
An ergodic stationary action is either probability preserving or of type \(\mathrm{III}\), namely it admits no absolutely continuous invariant \(\sigma\)-finite measure.
\end{thm}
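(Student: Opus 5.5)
Let \(\left(X,\mu\right)\) be an ergodic stationary \(\left(G,\theta\right)\)-space, and suppose \(\nu\ll\mu\) is a \(G\)-invariant \(\sigma\)-finite measure. The goal is to show that \(\nu\) is a finite multiple of a \(G\)-invariant probability measure equivalent to \(\mu\), and that in fact \(\nu\) is proportional to \(\mu\). The plan works with the density \(\varphi=d\nu/d\mu\) and uses stationarity to show that \(\varphi\) is essentially constant.

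\textbf{Step 1 (reduction to equivalence).} By ergodicity, the \(G\)-invariant set \(\{\varphi>0\}\) is null or conull. We may assume \(\nu\neq0\), so this set is conull, \(\nu\sim\mu\), and \(0<\varphi<\infty\) almost everywhere. Invariance of \(\nu\) gives the cocycle identity
\[\frac{dg_{\ast}\mu}{d\mu}(x)=\frac{\varphi(x)}{\varphi(g^{-1}x)}.\]
In other words, the Radon--Nikodym cocycle of \(\mu\) is a coboundary.

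\textbf{Step 2 (a bounded \(\theta\)-harmonic type relation).} Stationarity says \(\int_G \frac{dg_{\ast}\mu}{d\mu}\,d\theta(g)=1\) \(\mu\)-a.e. By Step 1 this reads
\[\varphi(x)\int_G \frac{1}{\varphi(g^{-1}x)}\,d\theta(g)=1.\]
So \(\psi:=1/\varphi\) satisfies \(\int_G\psi(g^{-1}x)\,d\theta(g)=\psi(x)\). That is, \(\psi\) is a positive measurable function on \(X\) that is harmonic for the Markov operator \(P\psi(x)=\int\psi(g^{-1}x)\,d\theta(g)\). This operator is the adjoint of \(\theta\ast\) restricted to densities.

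\textbf{Step 3 (harmonic implies invariant).} Next we show that a positive \(P\)-harmonic function is \(G\)-invariant, hence constant by ergodicity. Truncate by setting \(\psi_n=\min(\psi,n)\), which is bounded and \(P\)-superharmonic. Bounded functions lie in \(L^2(\mu)\). For \(f\in L^2(\mu)\), stationarity gives
\[\int P(f^2)\,d\mu=\int f^2\,d(\theta\ast\mu)=\int f^2\,d\mu.\]
Combining this with Jensen's inequality \((Pf)^2\le P(f^2)\) yields the standard energy identity
\[\int\int_G |f(g^{-1}x)-f(x)|^2\,d\theta(g)\,d\mu = 2\int f(f-Pf)\,d\mu \quad\text{whenever } \int (Pf)^2\,d\mu=\int f^2\,d\mu .\]
The main obstacle is that \(\psi\) itself need not be integrable, and truncation destroys exact harmonicity.

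My first attempt would avoid \(L^2\) and use a martingale argument instead. Let \(g_1,g_2,\dots\) be i.i.d. with law \(\theta\). Then \(\psi(g_n^{-1}\cdots g_1^{-1}x)\) is a positive martingale for a.e. \(x\), so it converges a.s. Pair this with the \(\mu\)-stationary process, whose one-dimensional marginals are all \(\mu\). Then the increments \(\log\psi(g^{-1}y)-\log\psi(y)\), with \(y\sim\mu\) and \(g\sim\theta\), have a fixed distribution at every time by stationarity. On the other hand, these increments must tend to \(0\) in probability because the martingale converges to a finite positive limit. The limit is positive because \(\psi>0\), via the law of the limit under the stationary measure. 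Together these force \(\psi(g^{-1}y)=\psi(y)\) for \(\theta\otimes\mu\)-a.e. \((g,y)\).

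Admissibility of \(\theta\), meaning that some convolution power is spread out and its support generates \(G\), then upgrades this to invariance under \(\theta^{\ast n}\)-a.e. \(g\) for every \(n\), and hence under all \(g\in G\). Ergodicity makes \(\psi\) constant. Step 1 then gives \(dg_{\ast}\mu/d\mu=1\), so \(\mu\) is invariant and \(\nu=c\mu\) is finite, which proves the dichotomy.
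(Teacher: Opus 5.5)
Your Steps 1 and 2 are correct. Step 3, which carries the argument, has a genuine gap: it confuses \(\theta\) with its inverse \(\check\theta(E):=\theta(E^{-1})\).

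\textbf{The direction of the walk.} The identity \(\psi(x)=\int_G\psi(g^{-1}.x)\,d\theta(g)\) says that \(\psi=d\mu/d\nu\) is harmonic for the Markov operator of \(\check\theta\). But \(\mu\) is stationary only for \(\theta\). Your martingale \(M_n=\psi(y_n)\) therefore lives along the inverse walk \(y_n=(g_1\cdots g_n)^{-1}.x\). For \(x\sim\mu\), the law of \(y_n\) is \(\check\theta^{\ast n}\ast\mu\), not \(\mu\). The process whose marginals are all \(\mu\) is \(x_n=g_n\cdots g_1.x\), and \(\psi\) is not a martingale along it.

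\textbf{Consequences.} Your claim that the increments \(\log\psi(g_{n+1}^{-1}.y_n)-\log\psi(y_n)\) are identically distributed is unfounded. So is the positivity of \(\lim M_n\), which you justify by the same fixed-marginal reasoning. Positivity cannot come for free. Indeed \(M_n=\psi(x)\cdot\frac{dz_n\mu}{d\mu}(x)\) with \(z_n=g_1\cdots g_n\) is a Radon--Nikodym martingale. Such martingales tend to \(0\), for example, on nontrivial Poisson boundaries, where \(z_n\mu\) converges to Dirac masses. So positivity must be extracted from the coboundary hypothesis, and your argument does not do this. The same inversion error appears in your \(L^2\) computation, since \(\int_X P(f^2)\,d\mu=\int_X f^2\,d(\check\theta\ast\mu)\), not \(\int_X f^2\,d(\theta\ast\mu)\).

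Starting from \(x\sim\nu\) would make the marginals of \(y_n\) stationary, and your argument would then work if \(\nu\) were finite. That is only the type \(\mathrm{II}_1\) case. For infinite \(\nu\), ``identically distributed and tending to \(0\)'' no longer forces the increments to vanish, because mass can escape.

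\textbf{A repair.} Dualize the other way. Work with \(\varphi=d\nu/d\mu\), the forward operator \(T_\theta f(x)=\int_G f(g.x)\,d\theta(g)\), and its \(\mu\)-adjoint on nonnegative functions,
\[T_\theta^\ast h(y)=\int_G h(g^{-1}.y)\tfrac{dg\mu}{d\mu}(y)\,d\theta(g).\]
Invariance of \(\nu\) gives \(T_\theta^\ast\varphi=\varphi\), and stationarity gives \(T_\theta^\ast\mathbf{1}=\mathbf{1}\). Hence \(\varphi_n:=\min(\varphi,n)\) satisfies \(T_\theta^\ast\varphi_n\leq\varphi_n\) with equal \(\mu\)-integrals, so \(T_\theta^\ast\varphi_n=\varphi_n\). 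Using \(\int_X T_\theta(f^2)\,d\mu=\int_X f^2\,d\mu\), which is where \(\theta\)-stationarity correctly enters, you get
\[\int_X\int_G\bigl|\varphi_n(g.x)-\varphi_n(x)\bigr|^{2}\,d\theta(g)\,d\mu(x)=2\int_X\varphi_n^{2}\,d\mu-2\int_X\varphi_n\cdot T_\theta^\ast\varphi_n\,d\mu=0.\]
So \(T_\theta\varphi_n=\varphi_n\). Fact~\ref{fct:invariance} together with ergodicity then makes each \(\varphi_n\), hence \(\varphi\), constant.

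This is essentially the paper's argument projected to \(X\). The paper observes that \(\mathbb{P}\otimes\mu\) and \(\mathbb{P}\otimes m\) are both invariant under the forward transformation \(\tau(\omega,x)=(\sigma\omega,\omega_0.x)\). Hence \(\kappa\circ\pi_X\), with \(\kappa=d\mu/dm\) (your \(\psi\)), is \(\tau\)-invariant, and ergodicity of \(\tau\) (Proposition~\ref{prop:ergodic}) makes it constant. In both versions the dynamics moves forward by \(g\sim\theta\), matching the direction in which \(\mu\) is stationary.
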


Thus ergodic stationary actions never exhibit type \(\mathrm{II}_{\infty}\). This gives a first rigidity statement for stationary dynamics. Later we complement it by adapting a construction of Katznelson--Weiss~\cite[\S5]{katznelson1991classification} (for \(G=\mathbb{Z}\)) and Vaes--Verjans~\cite{vaes2023orbit} (for general \(G\)) to show that, once a group admits a stationary action of type \(\mathrm{III}_{1}\), it also admits stationary actions of all types \(\mathrm{III}_{\lambda}\), \(0\leq\lambda\leq 1\).

\smallskip

The rest of the paper develops the Radon--Nikodym cocycle as a unifying object behind the analytic and topological aspects of stationary actions. A central role is played by the normalized cone of positive \(\theta\)-harmonic (measurable) functions,
\[\mathbb{H}\left(G,\theta\right):=\left\{u:G\to\mathbb{R}_{>0}:u\ast\theta= u,\,u\left(e_{G}\right)=1\right\},\]
where \(e_{G}\) is the identity element of \(G\), and the \emph{harmonic majorant},
\[\mathfrak{m}_{\theta}\left(g\right):=\sup\left\{u\left(g\right):u\in\mathbb{H}\left(G,\theta\right)\right\},\quad g\in G.\]
The harmonic majorant controls ratios of positive harmonic functions and, through the Radon--Nikodym cocycle, controls the distortion of all stationary actions of \(\left(G,\theta\right)\). We say that \(\left(G,\theta\right)\) satisfies Harnack's inequality if \(\mathfrak{m}_{\theta}\left(g\right)<+\infty\) for every \(g\in G\).

\smallskip

Recall that every Poisson boundary has the SAT property (Strongly Approximately Transitive) \cite{jaworski1995strong, furstenberg2010stationary}. A slightly stronger property was introduced by Kaimanovich under the name SAT\(^{\ast}\) property \cite{Kaimanovich2002SAT}. As we shall explain, in discrete countable groups, every stationary action with the SAT property also has the SAT\(^{\ast}\) property, but this is no longer true for general measured group. Still, finiteness of the harmonic majorant forces the Poisson boundary to satisfy the \(\mathrm{SAT}^{\ast}\) property. We prove Harnack's inequality when \(\theta\) is compactly supported with \(L^{p}\)-density for \(p>1\).\footnote{The individual boundedness of Radon--Nikodym derivatives of stationary actions for measured groups with an \(L_{\mathrm{c}}^{\infty}\left(G\right)\)-density seems to be known (see e.g. \cite[\S3]{nevo2002}, \cite[\S1.3]{nevo2000rigidity}), but we could not find its proof in the literature.} On the other hand, we construct a random walk on the real affine group whose Poisson boundary fails the \(\mathrm{SAT}^{\ast}\) property, and whose Radon--Nikodym kernel produces many non-locally bounded positive harmonic functions. We denote by \(L_{\mathrm{c}}^{p}\left(G\right)\) the space of \(L^{p}\)-integrable functions on \(G\) having compact support.

\begin{thm}\label{mthm:Harnack}
If \(\theta\) has an \(L_{\mathrm{c}}^{p}\left(G\right)\)-density for some \(1<p\leq+\infty\), then Harnack's inequality holds:
\[\mathfrak{m}_{\theta}\left(g\right)<+\infty\text{ for all }g\in G.\]
On the other hand, there exists an admissible probability measure \(\theta\) on the real affine group \(G=\mathrm{Aff}\left(\mathbb{R}\right)\) with Poisson boundary \(\left(\mathbb{R},\mu\right)\), such that every identity neighborhood contains an element \(g\) with
\[\Big\Vert\frac{dg\mu}{d\mu}\Big\Vert_{L^{\infty}\left(\mu\right)}=+\infty.\]
In particular, this Poisson boundary is \(\mathrm{SAT}\) but not \(\mathrm{SAT}^{\ast}\), and \(\left(G,\theta\right)\) fails Harnack's inequality.
\end{thm}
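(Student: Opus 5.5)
For the first assertion, write harmonicity as \(u\left(g\right)=\int_{G}u\left(gh\right)d\theta\left(h\right)\), or the mirror convention if the paper's \(u\ast\theta\) is defined the other way round. Iterating gives \(u=u\ast\theta^{\ast n}\) for every \(n\). The plan has three steps.

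\emph{Step 1 (smoothing).} Let \(f\in L_{\mathrm{c}}^{p}\left(G\right)\) be the density of \(\theta\), supported in a compact \(K\). By Young's inequality, \(f\ast f\in L^{r}\) with \(1+\frac{1}{r}=\frac{2}{p}\), so \(r>p\). On a non-unimodular group the modular-function corrections are bounded, because all supports stay inside the compact sets \(K^{n}\). Iterating finitely many times, some \(f_{n}=f^{\ast n}\) lies in \(L_{\mathrm{c}}^{\infty}\); one more convolution even makes it continuous. This reduces the problem to the case of a bounded compactly supported density \(f_{n}\) with support \(K_{n}\subseteq K^{n}\).

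\emph{Step 2 (upper bound).} For such a density, changing variables \(x=gh\) in \(u\left(g\right)=\int u\left(gh\right)f_{n}\left(h\right)dh\) gives
\[u\left(g\right)\leq\Vert f_{n}\Vert_{\infty}\,\Delta\text{-factor}\cdot\int_{gK_{n}}u\left(x\right)dx.\]
Here the \(\Delta\)-factor comes from changing left to right Haar measure and is bounded uniformly for \(g\) in a compact set.

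\emph{Step 3 (lower bound from admissibility).} Fix a compact \(C\supseteq gK_{n}\). I would show that there exist \(N\) and \(c>0\) with
\[\sigma:=\frac{1}{N}\sum_{k=1}^{N}\theta^{\ast k}\geq c\cdot\mathds{1}_{C}\,dx.\]
The argument: admissibility gives that some convolution power dominates a multiple of Haar measure on an open set, and that the support of \(\theta\) generates \(G\) as a semigroup. Then finitely many translates of that open set, reached by convolution powers, cover \(C\), and compactness yields the uniform constant \(c\). Since \(u=u\ast\sigma\), normalization gives
\[1=u\left(e_{G}\right)\geq c\int_{C}u.\]
Combining with Step 2, \(u\left(g\right)\leq\Vert f_{n}\Vert_{\infty}\cdot\Delta\text{-factor}/c\), uniformly in \(u\in\mathbb{H}\left(G,\theta\right)\) and locally uniformly in \(g\). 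Hence \(\mathfrak{m}_{\theta}\) is finite and locally bounded. I expect Step 3 to be the main technical point, since it is where admissibility must be converted into a quantitative Haar lower bound on an arbitrary compact set.

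For the second assertion, take \(\theta\) on \(\mathrm{Aff}\left(\mathbb{R}\right)\) with negative logarithmic drift of the linear part and finite log-moments. By the known identification of the Poisson boundary for such walks (Kaimanovich, Brofferio), the Poisson boundary is \(\left(\mathbb{R},\mu\right)\), where \(\mu\) is the law of \(\sum_{k}a_{1}\cdots a_{k-1}b_{k}\). The design has three parts:
\begin{itemize}
\item Let \(\theta=\theta_{0}+\sum_{j}p_{j}\delta_{g_{j}}\), where \(\theta_{0}\) is a small absolutely continuous admissible part and the \(g_{j}\) are affine maps with wildly varying contraction rates.
\item Choose the weights \(p_{j}\) to decay very fast, so that \(\mu\) is very unevenly distributed: near a sequence of points \(x_{j}\), the mass of \(\mu\) in small intervals is tiny compared with the mass of nearby intervals.
\item Choose small elements \(h_{j}\to e\) (translations by \(t_{j}\to0\)) that map a heavy interval onto a light one, so that
\[\Big\Vert\frac{dh_{j}\mu}{d\mu}\Big\Vert_{L^{\infty}\left(\mu\right)}\geq\frac{\mu\left(\text{heavy interval}\right)}{\mu\left(\text{light interval}\right)}\to\infty.\]
\end{itemize}
The delicate step is estimating \(\mu\) of these intervals from the stationarity equation \(\mu=\theta\ast\mu\). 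The atoms give the lower bound \(\mu\geq p_{j}\,g_{j}\mu\), whereas an upper bound on the light intervals requires controlling every contribution to them, including the one from \(\theta_{0}\). I expect this upper-bound estimate to be the main obstacle of the whole proof, and I would first try a lacunary choice of contraction rates so that the contributions separate.

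Given this, failure of \(\mathrm{SAT}^{\ast}\) is then immediate from its definition. Failure of Harnack's inequality follows from the Poisson transform. For measurable \(A\subseteq\mathbb{R}\) with \(\mu\left(A\right)>0\), the function \(u_{A}\left(g\right)=g\mu\left(A\right)/\mu\left(A\right)\) lies in \(\mathbb{H}\left(G,\theta\right)\). Taking \(A\) to be a set where \(dg\mu/d\mu\) is large shows that \(\mathfrak{m}_{\theta}\left(g\right)\) is not bounded on any identity neighborhood, in contrast with the first assertion. The statement itself asks for \(\mathfrak{m}_{\theta}\left(g\right)=+\infty\) at some point. For that I would take the sets \(A_{k}=\left\{d g_{0}\mu/d\mu>k\right\}\) for one fixed \(g_{0}\) with \(\Vert dg_{0}\mu/d\mu\Vert_{L^{\infty}\left(\mu\right)}=+\infty\), which by the third bullet should be available among the \(h_{j}\). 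This gives \(\mathfrak{m}_{\theta}\left(g_{0}\right)\geq\sup_{k}u_{A_{k}}\left(g_{0}\right)=+\infty\).
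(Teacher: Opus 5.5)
The first part of your proposal is essentially the paper's proof. The paper fuses your Steps 2 and 3 into one domination \(g\theta^{\ast N}\leq\sum_{r}\alpha_{r}\theta^{\ast k_{r}}\) (\(\theta\)-suitability). It uses \(\varphi^{\ast N}\in C_{\mathrm{c}}(G)\) and a finite cover of \(gK\) by neighborhoods on which some \(\varphi^{\ast(n_{x}+N)}\) is bounded below, which is your Step 3. That part is fine.

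The second part has a genuine gap, in three places.

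(i) Your \(\theta=\theta_{0}+\sum_{j}p_{j}\delta_{g_{j}}\) has atoms. It is therefore not absolutely continuous with respect to Haar measure, hence not admissible in the sense of this paper, so it does not define a measured group.

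(ii) The decisive estimate is not carried out. You need an upper bound on \(\mu\) of the ``light'' intervals that controls the contribution of \(\theta_{0}\) and of all the other atoms. Without it you have a heuristic, not a construction.

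(iii) Even granting your third bullet, it gives only \(\Vert dh_{j}\mu/d\mu\Vert_{L^{\infty}(\mu)}\to\infty\) along \(h_{j}\to e_{G}\), with each term finite. The statement, the failure of \(\mathrm{SAT}^{\ast}\), and your sets \(A_{k}\) all need a single \(g\) with infinite norm. Your claim that such a \(g_{0}\) ``should be available among the \(h_{j}\)'' does not follow. Point (iii) can be repaired, but you do not give the argument. The function \(N(g):=\Vert dg\mu/d\mu\Vert_{L^{\infty}(\mu)}\) is submultiplicative and lower semicontinuous, the latter because \(g\mapsto g\mu(A)\) is continuous. If \(N\) were finite on a symmetric identity neighborhood, it would be finite on the connected group \(\mathrm{Aff}(\mathbb{R})\). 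It would then be locally bounded by Lemma~\ref{lem:submultip}, a contradiction.

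The missing idea is to create a \emph{point singularity} of the stationary density while keeping \(\theta\) absolutely continuous. The paper takes independent \(A,B\) with densities \(f_{B}(b)=c_{B}|b|^{-\alpha}e^{-b^{2}}\) and \(f_{A}(a)=c_{A}\big(Ma^{\beta-1}\mathbf{1}_{(0,1/2)}(a)+\mathbf{1}_{(1-\delta,1+\delta)}(a)\big)\), with \(0<\beta<\alpha<1\).

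From \(R_{\infty}\overset{d}{=}B_{1}+A_{1}R_{\infty}^{\prime}\) and the product-density formula, one gets \(\rho_{U}(y)\geq c\,y^{\beta-1}\) for small \(y>0\) and \(\rho_{U}(y)\leq C|y|^{-1}\) everywhere. Convolving with \(f_{B}\) then gives \(\rho_{\infty}(x)\geq c^{\prime}x^{\beta-\alpha}\to\infty\) as \(x\to0^{+}\). Meanwhile \(\rho_{\infty}\) stays bounded on \((t,t+|t|/8)\) for each \(t\neq0\).

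By Lemma~\ref{lem:Rinfstat}, \(\tfrac{dg_{t}\mu}{d\mu}(t+x)=\rho_{\infty}(x)/\rho_{\infty}(t+x)\). Hence \emph{every} translation \(g_{t}\), \(t\neq0\), individually has infinite norm, with no multi-scale bookkeeping needed.
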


The second part of the theorem exhibits a genuinely locally compact phenomenon: even for Poisson boundaries, the Radon--Nikodym kernel may fail to be locally bounded in every neighborhood of the identity. This shows that continuity of the kernel is a genuine issue in the locally compact setting, and motivates the compact model problem to which we now turn.

\smallskip

We now turn to compact topological models for stationary actions. Compact models for measurable \(G\)-spaces go back to Mackey and Varadarajan \cite{mackey1962point,varadarajan1968geometry}. In the context of Poisson boundaries one also seeks models in which the Poisson kernel is continuous, a theme going back to Furstenberg's seminal work and its later developments \cite{furstenberg1963poisson,furstenberg1963noncomm,kaimanovich1995poisson,furman2002random,benoist2016random}. We study this problem for arbitrary stationary actions and show that an integrability condition on the harmonic majorant yields a universal compact Radon--Nikodym model, that simultaneously realizes the Radon--Nikodym factors of all stationary actions.

\begin{thm}\label{mthm:compactRNmodel}
Let \(\left(G,\theta\right)\) be a measured group such that
\[\mathbb{E}_{\theta}\left[\mathfrak{m}_{\theta}\right]<+\infty.\]
Then it admits a universal compact Radon--Nikodym model: there exists a compact \(G\)-space \(\mathbb{G}\left(G,\theta\right)\) equipped with a continuous \(\theta\)-harmonic cocycle
\[\mathfrak{g}:G\times\mathbb{G}\left(G,\theta\right)\to\mathbb{R}_{>0},\]
such that every stationary \(\left(G,\theta\right)\)-space admits a compact model inside \(\mathbb{G}\left(G,\theta\right)\) on which \(\mathfrak{g}\) restricts to a continuous version of its Radon--Nikodym cocycle.
\end{thm}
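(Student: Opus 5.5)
I will build \(\mathbb{G}\left(G,\theta\right)\) as a compactification of the normalized harmonic cone. The starting point is the link between stationary actions and \(\mathbb{H}\left(G,\theta\right)\). For a stationary \(\left(G,\theta\right)\)-space \(\left(X,\mu\right)\), stationarity \(\theta\ast\mu=\mu\) gives \(\int\frac{dh^{-1}\mu}{d\mu}\,d\theta\left(h\right)=1\) almost everywhere. By Fubini over \(G\times X\), for \(\mu\)-a.e.\ \(x\) the function
\[u_{x}\left(g\right):=\frac{dg^{-1}\mu}{d\mu}\left(x\right)\]
(with the convention adjusted so that \(u_{x}\ast\theta=u_{x}\)) is then a positive \(\theta\)-harmonic function with \(u_{x}\left(e_{G}\right)=1\). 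The cocycle identity reads \(u_{g.x}\left(h\right)=u_{x}\left(hg\right)/u_{x}\left(g\right)\). So the natural ambient space is \(\mathbb{H}\left(G,\theta\right)\) itself, with the action \(\left(g.u\right)\left(h\right)=u\left(hg\right)/u\left(g\right)\) and the cocycle \(\mathfrak{g}\left(g,u\right)=u\left(g\right)\) (or its inverse, depending on conventions). One checks directly that \(\mathfrak{g}\) satisfies the cocycle identity and is \(\theta\)-harmonic, in the sense \(\int\mathfrak{g}\left(g,u\right)d\theta\left(g\right)=1\). The map \(x\mapsto u_{x}\) is then \(G\)-equivariant on a conull invariant set, and it pulls \(\mathfrak{g}\) back to the Radon--Nikodym cocycle.

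The main task is topological. I want to put a compact metrizable topology on \(\mathbb{H}\left(G,\theta\right)\) (or on a suitable closure of it) such that the action is jointly continuous and evaluation \(u\mapsto u\left(g\right)\) is continuous. The hypothesis \(\mathbb{E}_{\theta}\left[\mathfrak{m}_{\theta}\right]<+\infty\) enters here. Harmonicity gives \(u=u\ast\theta\), so each \(u\) is an average of translates of \(\theta\) weighted by \(u\) itself. Since \(u\le\mathfrak{m}_{\theta}\) and \(\mathfrak{m}_{\theta}\) is \(\theta\)-integrable, dominated convergence should show that \(\mathbb{H}\) is closed, hence compact, in a weak topology. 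I would use the topology of convergence of the measures \(u\,d\theta\), or more generally \(u\,d\left(\theta^{\ast n}\right)\), against \(C_{c}\left(G\right)\). Domination by \(\mathfrak{m}_{\theta}\) is exactly what prevents mass from escaping, so that harmonicity and the normalization \(u\left(e_{G}\right)=1\) pass to limits. Because \(u\left(e_{G}\right)\) is a point value, I will redefine harmonic functions by their canonical version \(u\left(g\right)=\int u\left(gh\right)d\theta\left(h\right)\), which is well defined pointwise. Continuity of \(u\mapsto u\left(g\right)\) then follows from this integral formula by the same domination, using admissibility so that some convolution power has an absolutely continuous part.

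It remains to make the action continuous and to obtain genuine compact models. I would pass to the subspace on which \(g\mapsto u\left(g\right)\) is continuous, uniformly over the family, which follows from \(u=u\ast\theta\) together with continuity of translation in \(L^{1}\), and take its closure. Joint continuity of \(\left(g,u\right)\mapsto g.u\) then follows from the formula \(u\left(hg\right)/u\left(g\right)\), since \(u\left(g\right)\) is bounded below by a Harnack-type bound, obtained by applying \(\mathfrak{m}_{\theta}\) to \(g^{-1}.u\). For a given stationary space, the pushforward of \(\mu\) under \(x\mapsto u_{x}\) is a stationary measure on \(\mathbb{G}\), and this realizes the Radon--Nikodym factor. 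To realize the whole space, I will take the product of \(\mathbb{G}\) with a Mackey--Varadarajan compact model of \(X\) and embed it into a universal compact \(G\)-space, for example the product of \(\mathbb{G}\) with a universal space of \(G\)-orbits in the Hilbert cube. I will then verify that the pulled-back \(\mathfrak{g}\) agrees a.e.\ with \(\frac{dg\mu}{d\mu}\).

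I expect the main obstacle to be the measure-theoretic step: choosing versions of \(\frac{dg^{-1}\mu}{d\mu}\left(x\right)\) jointly in \(\left(g,x\right)\) so that \(u_{x}\) is harmonic everywhere and not merely \(\theta\)-a.e., and showing the equivariance identity for all \(g\). I would handle this by the convolution smoothing \(u\mapsto u\ast\theta\), which canonically picks the harmonic version, together with the integrability of \(\mathfrak{m}_{\theta}\).
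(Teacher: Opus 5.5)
Your architecture matches the paper's: \(\mathbb{H}\left(G,\theta\right)\) with normalized translations and the evaluation cocycle, a harmonic version of the Radon--Nikodym cocycle obtained by smoothing with \(\varphi\), and a product with a Mackey--Varadarajan universal space. The gap is in the step where \(\mathbb{E}_{\theta}[\mathfrak{m}_{\theta}]<+\infty\) is actually used: making \(\mathbb{H}\left(G,\theta\right)\) a compact metric space on which \((g,u)\mapsto u(g^{-1})\) and the action are jointly continuous.

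Joint continuity of evaluation on \(G\times\mathbb{H}\) with \(\mathbb{H}\) compact forces \(\mathbb{H}\) to be equicontinuous on compact sets. So a weak topology cannot spare you that proof; in the vague convergence of \(u\,d\theta\) you propose, \(u\mapsto u(g)=\int u\cdot g\varphi\,dm_G\) is not even visibly continuous. Your justification, ``\(u=u\ast\theta\) together with continuity of translation in \(L^{1}\)'', does not suffice because \(u\) is unbounded. One has
\[
|u(h)-u(hk)|\le\int_G u(hg)\,|\varphi(g)-k\varphi(g)|\,dm_G(g)\le u(h)\int_G\mathfrak{m}_{\theta}(g)\,|\varphi(g)-k\varphi(g)|\,dm_G(g),
\]
so you need continuity of translation in the weighted space: \(\sup_{k\in K}\|\mathfrak{m}_{\theta}(\varphi-k\varphi)\|_{L^1}\to0\) as \(K\searrow\{e_G\}\). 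This is Lemma~\ref{lem:deltaK}. Its proof uses three facts your proposal lacks:
\begin{enumerate}
\item integrability of \(\mathfrak{m}_\theta\varphi\), to cut off a tail \(G\setminus L\);
\item local boundedness of \(\mathfrak{m}_\theta\) on the compact core \(CL\);
\item submultiplicativity \(\mathfrak{m}_\theta(kg)\le\mathfrak{m}_\theta(k)\mathfrak{m}_\theta(g)\), which for \(k\) in a compact identity neighbourhood \(C\) controls the translated tail: \(\int_{G\setminus CL}\mathfrak{m}_\theta\cdot k\varphi\,dm_G\le\mathfrak{m}_\theta(k)\int_{G\setminus L}\mathfrak{m}_\theta\varphi\,dm_G\).
\end{enumerate}
Local boundedness is itself a consequence of submultiplicativity via Steinhaus--Weil (Proposition~\ref{prop:mtheta}, Lemma~\ref{lem:submultip}). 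It is also what gives the uniform bounds for Arzel\`a--Ascoli, and the lower bound on \(u(g)\) uniformly for \(g\) in compact sets that your continuity-of-the-action argument needs.

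Continuity of each \(u_x\) is, in turn, what lets one verify \(\rho(g.x)=g.\rho(x)\) from a countable dense subset of \(G\). So the real obstacle is this compactness and equicontinuity result, not the choice of versions. The latter is a routine Fubini argument with \(u_x(g)=\int\psi_h(x)\varphi(g^{-1}h)\,dm_G(h)\).

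A smaller, fixable point concerns conventions. The condition \(u\ast\theta=u\) is preserved by left translations, not right ones, so \((g.u)(h)=u(hg)/u(g)\) does not preserve \(\mathbb{H}\) for nonabelian \(G\). The \(\theta\)-harmonic function is \(u_x(g)=\frac{dg\mu}{d\mu}(x)\), whose cocycle identity is \(u_{g.x}(h)=u_x(g^{-1}h)/u_x(g^{-1})\). This yields the paper's action \(g.u(h)=u(g^{-1}h)/u(g^{-1})\) and cocycle \(\mathfrak{g}_g(u,y)=u(g^{-1})\). Your identity is the one satisfied by \(\frac{dg^{-1}\mu}{d\mu}(x)\), which is harmonic only for the mirrored convolution.
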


We shall prove that the integrability condition of Theorem~\ref{mthm:compactRNmodel} automatically holds in the setting of Theorem~\ref{mthm:Harnack}, and we thus obtain the following general result.

\begin{cor}\label{corr:compactRNmodel}
Every measured group having an \(L_{\mathrm{c}}^{p}\left(G\right)\)-density for some \(1<p\leq+\infty\), admits a universal compact Radon--Nikodym model.
\end{cor}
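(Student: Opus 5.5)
The plan is to deduce the corollary from Theorem~\ref{mthm:compactRNmodel}. Its only hypothesis is
\[\mathbb{E}_{\theta}\left[\mathfrak{m}_{\theta}\right]=\int_G\mathfrak{m}_{\theta}\,d\theta<+\infty,\]
so it suffices to verify this integrability condition whenever \(\theta=f\,dm_G\) with \(f\in L_{\mathrm{c}}^{p}(G)\), \(1<p\leq+\infty\). Since \(\theta\) is supported on the compact set \(K=\operatorname{supp}f\), it is enough to show that \(\mathfrak{m}_{\theta}\) is bounded on \(K\), or at least lies in \(L^{q}(K)\) for the conjugate exponent \(q\). In the second case Hölder's inequality gives \(\int\mathfrak{m}_\theta f\,dm_G\leq\|f\|_p\|\mathfrak{m}_\theta\mathds{1}_K\|_q\).

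For local boundedness I would strengthen the first half of Theorem~\ref{mthm:Harnack}, which gives pointwise finiteness, into a locally uniform bound. This is the "finite and locally bounded" statement in the abstract. The key steps are as follows.

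\textbf{Step 1.} Use harmonicity \(u=u\ast\theta\) to write, for \(u\in\mathbb{H}(G,\theta)\),
\[u(g)=\int u(gh^{-1})f(h)\,dm_G(h),\]
or the appropriate left/right form of this identity. After a change of variables this becomes an integral of \(u\) against a translate of \(f\) (twisted by the modular function, which is bounded on compacts).

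\textbf{Step 2.} Apply Hölder's inequality to get \(u(g)\leq C_K\|f\|_p\,\|u\|_{L^{q}(gK^{-1})}\). This reduces a pointwise bound to an \(L^{q}\)-bound of \(u\) on compact sets.

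\textbf{Step 3.} Obtain a uniform \(L^{1}\) (and then \(L^{q}\)) bound on compacts from the normalization \(u(e)=1\). Iterating harmonicity gives \(1=u(e)=\int u\,d\theta^{\ast n}\) for every \(n\). By admissibility, some convolution power \(\theta^{\ast n}\) dominates a multiple of Haar measure on a neighborhood, and \(\bigcup_n\operatorname{supp}\theta^{\ast n}\) generates \(G\). This yields \(\int_{L}u\,dm_G\leq C_L\) uniformly over \(\mathbb{H}(G,\theta)\) for each compact \(L\).

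\textbf{Step 4.} Upgrade from \(L^1\) to \(L^q\). The densities of \(\theta^{\ast n}\) improve in integrability by Young's inequality, \(\|f\ast f\|_r\leq\|f\|_p\|f\|_p\) with \(1/r=2/p-1\), so after finitely many convolutions the density of \(\theta^{\ast N}\) lies in \(L^{\infty}_{\mathrm{c}}\). The modular function is controlled on compacts, so the unimodular Young inequality still applies up to constants. Replacing \(\theta\) by \(\theta^{\ast N}\) is harmless, because every \(\theta\)-harmonic \(u\) is \(\theta^{\ast N}\)-harmonic, so \(\mathfrak{m}_\theta\leq\mathfrak{m}_{\theta^{\ast N}}\). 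Repeating Steps 1–2 with a bounded density then gives
\[u(g)\leq\|f_N\|_\infty\int_{gK_N^{-1}}u\,dm_G\leq C'_{g},\]
with \(C'_g\) uniform over \(\mathbb{H}\) and locally uniform in \(g\) by Step 3. Hence \(\mathfrak{m}_\theta\) is locally bounded, so \(\mathbb{E}_\theta[\mathfrak{m}_\theta]\leq\sup_K\mathfrak{m}_\theta<+\infty\), and Theorem~\ref{mthm:compactRNmodel} applies.

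The main obstacle is Step 3: producing a uniform bound \(\int_L u\,dm_G\leq C_L\) on an arbitrary compact \(L\) from \(u(e)=1\). I would cover \(L\) by finitely many translates \(g_iV\), where \(V\) is a neighborhood on which some \(\theta^{\ast n_i}\) has density bounded below, and chain the harmonicity identities \(u(x)=\int u\,d(x\theta^{\ast n})\). This requires care with left versus right translations and with the modular function. It also uses the Step 4 local bound near \(e\) to propagate control from the identity to the points \(g_i\).
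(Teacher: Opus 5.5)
Your route is correct in substance, but it obtains local boundedness of $\mathfrak{m}_{\theta}$ differently from the paper. The paper works in two stages. First, the proof of Theorem~\ref{mthm:Harnack} shows that each single $g$ is $\theta$-suitable, i.e. $g\theta^{\ast N}\leq\sum_{r}\alpha_{r}\theta^{\ast k_{r}}$, which gives only pointwise finiteness. Second, it upgrades to local boundedness by soft means, namely submultiplicativity plus Steinhaus--Weil (Proposition~\ref{prop:mtheta}(2), Lemma~\ref{lem:submultip}), and then uses that $\theta$ has compact support. Your Steps 3--4 are the same estimate, rewritten as a mean-value inequality $u(g)\leq\|\psi\|_{\infty}\int_{g\,\mathrm{supp}\,\psi}u\,dm_{G}$ (with $\psi$ the bounded, compactly supported density of $\theta^{\ast N}$) together with a uniform local $L^{1}$ bound. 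Because you cover $C\cdot\mathrm{supp}\,\psi$ for a compact $C$ rather than a single translate $g\,\mathrm{supp}\,\psi$, you get the locally uniform bound in one stroke, and submultiplicativity and Steinhaus--Weil are never needed. Your version is self-contained and gives explicit constants. The paper's factorization buys generality: the upgrade from finite to locally bounded holds for every measured group, so pointwise criteria suffice even when densities are not compactly supported (the Gaussian semigroups, Proposition~\ref{prop:compgen}(2)). The H\"older/$L^{q}$ variant in Step 2 becomes unnecessary once you pass to a bounded density.

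There are two corrections. First, Step 3 is not an obstacle, and the chaining you propose would be circular. The Step 4 bound near $e$ itself requires Step 3 on a neighborhood of $\mathrm{supp}\,\psi$. The direct argument is exactly the paper's claim \eqref{eq:posit}. Take $\psi\in C_{\mathrm{c}}(G)$ and $O=\{\psi>0\}$. For each $x$, admissibility gives $n_{x}$ with $\theta^{\ast n_{x}}(xO^{-1})>0$. Hence the continuous density $y\mapsto\int\psi(h^{-1}y)\,d\theta^{\ast n_{x}}(h)$ of $\theta^{\ast(n_{x}+N)}$ is at least some $c_{x}>0$ on a neighborhood $V_{x}$ of $x$. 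Therefore $1=u(e_{G})=\int u\,d\theta^{\ast(n_{x}+N)}\geq c_{x}\int_{V_{x}}u\,dm_{G}$ for all $u\in\mathbb{H}(G,\theta)$, and finitely many $V_{x}$ cover any compact set. No propagation from $e_{G}$ is involved. Second, do not aim to bound $\mathfrak{m}_{\theta^{\ast N}}$. The measure $\theta^{\ast N}$ need not be admissible, and then $\mathfrak{m}_{\theta^{\ast N}}$ can be infinite. For example, on $\mathbb{R}\times\mathbb{Z}/2\mathbb{Z}$ with $\theta$ uniform on $[-1,1]\times\{1\}$, the function equal to $1$ on $\mathbb{R}\times\{0\}$ and to any $c>0$ on $\mathbb{R}\times\{1\}$ lies in $\mathbb{H}(G,\theta^{\ast 2})$. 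What works, and what your final display effectively does, is to apply the $\theta^{\ast N}$ mean-value inequality to $u\in\mathbb{H}(G,\theta)$ while using all powers $\theta^{\ast n}$ in Step 3.
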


\subsubsection{Organization}

The paper is organized as follows. In Section~\ref{sct:fund} we recall the basic material on measured groups, nonsingular actions, stationary spaces, and Poisson boundaries. Section~\ref{sct:conserv} contains two proofs of Theorem~\ref{mthm:conservative}. In Section~\ref{sct:acim} we prove Theorem~\ref{mthm:typeIII}, and in Section~\ref{sct:type} we discuss the realization of all types \(\mathrm{III}_{\lambda}\). Section~\ref{app:Harn} introduces the harmonic majorant and proves Theorem~\ref{mthm:Harnack}. Finally, Section~\ref{sct:compRN} is devoted to the topological realization of the Radon--Nikodym factor and the proof of Theorem~\ref{mthm:compactRNmodel}.

\section{Fundamentals}\label{sct:fund}

\subsection{Measured Groups}

The main object of study in this work is a {\bf measured group}, that is a pair \(\left(G,\theta\right)\) consisting of a locally compact second countable (lcsc) group \(G\) and an {\bf admissible} probability measure \(\theta\) on \(G\). By this we mean first that \(\theta\) is a Borel probability measure on \(G\) absolutely continuous with respect to the Haar measure class, and second that \(\theta\) is not supported on a closed proper subsemigroup of \(G\). The second condition is equivalent to that
\[G=\overline{\bigcup\nolimits_{n\geq1}\mathrm{supp}\left(\theta^{\ast n}\right)},\]
where the \(n^{\mathrm{th}}\) convolution of \(\theta\) is defined inductively by \(\theta^{\ast 1}=\theta\) and
\[\theta^{\ast\left(n+1\right)}\left(A\right)=\int_{G}\theta^{\ast n}\left(Ah^{-1}\right)d\theta\left(h\right).\]
Fix a left Haar measure \(m_{G}\) on \(G\), and denote the density of \(\theta\) against \(m_{G}\) by
\[\varphi:=d\theta/dm_{G}.\]
For \(g\in G\) we denote
\[g\theta\left(\cdot\right):=g_{\ast}\theta\left(\cdot\right)=\theta\left(g^{-1}\cdot\right)\quad\text{and}\quad g\varphi\left(\cdot\right):=\varphi\left(g^{-1}\cdot\right).\]
Note that \(g\varphi=dg\theta/dm_{G}\). For a measurable function \(u:G\to\mathbb{R}\) denote by \(u\ast\theta:G\to\mathbb{R}\), when is defined, the measurable function
\[u\ast\theta\left(g\right):=\int_{G}u\left(gh\right)d\theta\left(h\right)=\int_{G}u\left(h\right)\cdot g\varphi\left(h\right)dm_{G}\left(h\right).\]
A function \(u\) is {\bf \(\theta\)-harmonic} if \(u\ast\theta=u\). Denote the space of bounded \(\theta\)-harmonic functions by
\[\mathcal{H}^{\infty}\left(G,\theta\right).\]
Note that \(\mathcal{H}^{\infty}\left(G,\theta\right)\) consists of continuous functions, for if \(u\in\mathcal{H}^{\infty}\left(G,\theta\right)\), then for all \(g,h\in G\),
\[\left|u\left(g\right)-u\left(h\right)\right|\leq\int_{G}\left|u\left(k\right)\right|\cdot\left|g\varphi\left(k\right)-h\varphi\left(k\right)\right|dm_{G}\left(k\right)\leq\left\Vert u\right\Vert_{L^{\infty}\left(G\right)}\cdot\left\Vert g\varphi-h\varphi\right\Vert_{L^{1}\left(G\right)},\]
and the last term is continuous in the variables \(g,h\) by the \(L^{1}\)-continuity of translations.

For measurable functions \(u,v:G\to\mathbb{R}\), define their convolution, when it is well defined, by
\[u\ast v\left(g\right):=\int_{G}u\left(h\right)v\left(g^{-1}h\right)dm_{G}\left(h\right).\]
In particular, with \(\varphi=d\theta/dm_G\) one has
\[u\ast\varphi\left(g\right)=u\ast\theta\left(g\right).\]

\subsection{Nonsingular actions}

Let \(G\) be an lcsc group. A {\bf Borel \(G\)-space} is a standard Borel space \(X\) equipped with a measurable action map \(G\times X\to X\). When the action is clear from the context, we will abbreviate it as \(\left(g,x\right)\mapsto g.x\), and relate to \(g\) as an actual Borel automorphism of \(X\).

A pair of measures \(\mu_{1}\) and \(\mu_{2}\) on \(X\) are equivalent, and we denote it by \(\mu_{1}\sim\mu_{2}\), if they are mutually absolutely continuous. A probability measure \(\mu\) on a Borel \(G\)-space \(X\) is said to be {\bf quasi-invariant} if \(g\mu\sim\mu\) for every \(g\in G\), where \(g\mu\) is the pushforward measure of \(\mu\) by \(g\), given by
\[g\mu\left(A\right)=\mu\left(g^{-1}.A\right).\]
When \(\mu\) is a quasi-invariant measure on \(X\), the probability space \(\left(X,\mu\right)\) is called a {\bf nonsingular \(G\)-space}. For every nonsingular \(G\)-space \(\left(X,\mu\right)\), one has the Radon--Nikodym derivatives
\[\frac{dg\mu}{d\mu}\in L^{1}\left(\mu\right),\quad g\in G,\]
which satisfy the cocycle identities
\[\frac{de_{G}\mu}{d\mu}=1\quad\text{and}\quad\frac{d\left(gh\right)^{-1}\mu}{d\mu}=\frac{dg^{-1}\mu}{d\mu}\circ h\cdot\frac{dh^{-1}\mu}{d\mu}\text{ }\mu\text{-a.s. for all }g,h\in G.\]
These identities hold for each pair \(g,h\) on a \(\mu\)-conull set depending on \(g,h\), making it an \emph{almost cocycle}. This object is not unique in a pointwise manner, and we occasionally need to work with \emph{versions} of it which have some extra regularity. Let us then define this notion properly:

\begin{defn}\label{dfn:version}
A {\bf version} of the Radon--Nikodym cocycle of a nonsingular \(G\)-space \(\left(X,\mu\right)\), is any measurable map \(\varrho:G\times X\to\mathbb{R}_{>0}\), \(\varrho:\left(g,x\right)\mapsto\varrho_{g}\left(x\right)\), satisfying \(\varrho_{g}\left(\cdot\right)=\frac{dg^{-1}\mu}{d\mu}\left(\cdot\right)\) \(\mu\)-a.e. for every \(g\in G\).
\end{defn}

A version of the Radon--Nikodym cocycle may not be a cocycle per se but merely an almost cocycle, in that the cocycle property holds only almost everywhere for each pair of group elements individually. We will use the following terminology (following Becker \cite{becker2013cocycles}) to stress this subtle difference:

\begin{defn}\label{dfn:strversion}
A {\bf strict version} of the Radon--Nikodym cocycle of a nonsingular \(G\)-space \(\left(X,\mu\right)\), is any version \(\varpi:G\times X\to\mathbb{R}_{>0}\), \(\varpi:\left(g,x\right)\mapsto\varpi_{g}\left(x\right)\), which further satisfies the cocycle identity pointwise: 
\[\varpi_{gh}\left(x\right)=\varpi_{g}\left(h.x\right)\cdot\varpi_{h}\left(x\right)\text{ for all }g,h\in G\text{ and all }x\in X.\]
\end{defn}

By the celebrated Mackey cocycle theorem (which applies beyond Radon--Nikodym cocycles), as long as \(G\) is an lcsc group, the Radon--Nikodym cocycle of every nonsingular \(G\)-space admits a strict version (see \cite[Lemma~5.26]{varadarajan1968geometry}). The following fact is well-known:

\begin{fct}\label{fct:unifcont}
Let \(\left(X,\mu\right)\) be a nonsingular \(G\)-space.
\begin{enumerate}
    \item For every version \(\varrho:G\times X\to\mathbb{R}_{>0}\) of the Radon--Nikodym cocycle, the following unitary representation of \(G\) on \(L^{2}\left(\mu\right)\) is weakly (equivalently, strongly) continuous:\footnote{Note that \(\pi\), being \(L^{2}\)-valued, is independent of the choice of the version \(\varrho\).}
    \[\pi:G\to\mathrm{U}\left(L^{2}\left(\mu\right)\right),\quad\pi\left(g\right)f\left(x\right):=\sqrt{\varrho_{g^{-1}}\small(x\small)}\cdot f\left(g^{-1}.x\right).\]
    \item For every \(f\in L^{\infty}\left(\mu\right)\), the following function is uniformly continuous:
    \[P^{\mu}f:G\to\mathbb{R},\quad P^{\mu}f\left(g\right):=\int_{X}f\left(g.x\right)d\mu\left(x\right).\]
\end{enumerate}
\end{fct}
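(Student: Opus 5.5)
We prove the two parts in order, deducing the second from the first.

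For part (1), we first reduce to a dense set of vectors and to continuity at the identity. Each \(\pi(g)\) is unitary: it is isometric by the change of variables \(\int|f(g^{-1}.x)|^{2}\varrho_{g^{-1}}(x)\,d\mu(x)=\int|f|^{2}\,d\mu\), and \(\pi(g)\pi(g^{-1})=\mathrm{id}\) holds a.e. by the almost cocycle identity. It also follows from the almost cocycle identity that \(\pi(gh)=\pi(g)\pi(h)\) for each pair \(g,h\). Since all operators have norm one, continuity of \(g\mapsto\pi(g)f\) at \(e_G\) for \(f\) in a dense subset suffices.

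The main step is to establish \emph{weak measurability}, i.e. that \(g\mapsto\langle\pi(g)f_1,f_2\rangle\) is Borel. This holds because \(\varrho\) and the action map are jointly measurable, so Fubini applies to \((g,x)\mapsto\sqrt{\varrho_{g^{-1}}(x)}f_1(g^{-1}.x)\overline{f_2(x)}\). To use Fubini we need integrability over compact sets of \(g\); this follows from Cauchy--Schwarz, since \(\int|\pi(g)f_1|\,|f_2|\,d\mu\le\|f_1\|\|f_2\|\) for every \(g\). We then invoke the classical theorem that a weakly measurable unitary representation of an lcsc group on a separable Hilbert space is strongly continuous. Separability of \(L^2(\mu)\) holds because \(X\) is standard. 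The theorem is proved by smoothing: for \(\psi\in C_c(G)\), the vectors \(\pi(\psi)f=\int\psi(g)\pi(g)f\,dm_G(g)\) satisfy \(\pi(h)\pi(\psi)f=\pi(h\psi)f\), and \(h\mapsto h\psi\) is \(L^1\)-continuous. One then needs these vectors to span a dense subspace. If \(v\perp\pi(\psi)f\) for all \(\psi,f\), then \(\langle\pi(g)f,v\rangle=0\) for a.e. \(g\), for each \(f\) in a countable dense set. Hence some single \(g\) works for all of them, which forces \(v=0\) because \(\pi(g)\) is unitary. Weak and strong continuity coincide for unitary representations, since \(\|\pi(g)f-f\|^2=2\|f\|^2-2\mathrm{Re}\langle\pi(g)f,f\rangle\). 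We expect the measurability and density check to be the main point of care; the rest is standard.

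For part (2), we write
\[P^{\mu}f(g)=\int f\,d(g^{-1}\mu)=\int f\cdot\varrho_{g}\,d\mu.\]
Write \(\varrho_g=|\pi(g^{-1})\mathbf{1}|^{2}\). Then for \(g,h\in G\),
\[|P^{\mu}f(g)-P^{\mu}f(h)|\le\|f\|_\infty\|\varrho_g-\varrho_h\|_{L^1(\mu)}.\]
Setting \(a=\pi(g^{-1})\mathbf 1\) and \(b=\pi(h^{-1})\mathbf 1\), Cauchy--Schwarz gives
\[\|a^2-b^2\|_{1}\le\|a-b\|_2\|a+b\|_2\le2\|a-b\|_2 .\]
By unitarity, \(\|a-b\|_2=\|\pi(hg^{-1})\mathbf 1-\mathbf 1\|_2\), which is small when \(hg^{-1}\) is near \(e_G\) by part (1). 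This yields uniform continuity with respect to the right uniform structure. For the left uniform structure we use \(P^\mu f(gk)\), i.e. \(f\circ g\), whose sup-norm is unchanged, combined with the estimate above; we expect this to give uniform continuity under either convention, up to the paper's choice of uniformity.
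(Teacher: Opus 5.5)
Your part (1) is correct but argued differently from the paper. The paper passes to a strict version so that \(\pi\) is an honest homomorphism. It proves weak measurability by truncating the integrand with \(\chi_{C}\) and letting \(C\to+\infty\), and then quotes Pettis' automatic continuity theorem for measurable homomorphisms between Polish groups. You get the homomorphism property directly from the almost cocycle identity, which is legitimate since \(\pi\) is \(L^{2}\)-valued. You get measurability from Tonelli applied to the positive and negative parts. You then prove automatic continuity by hand via the smoothing vectors \(\pi(\psi)f\), \(\psi\in C_{\mathrm{c}}(G)\), using separability of \(L^{2}(\mu)\) for density. This classical argument makes your proof self-contained; the paper's is shorter at the cost of a citation.

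Part (2), however, starts from a wrong change of variables, and the main estimate you derive is false. In fact
\[P^{\mu}f(g)=\int_{X}f\,d(g\mu)=\int_{X}f\cdot\varrho_{g^{-1}}\,d\mu=\int_{X}f\cdot\left|\pi(g)\mathbf{1}\right|^{2}d\mu,\]
whereas \(\int_{X}f\,d(g^{-1}\mu)=\int_{X}f\varrho_{g}\,d\mu\) is \(P^{\mu}f(g^{-1})\). Redoing your Cauchy--Schwarz step with \(a=\pi(g)\mathbf{1}\), \(b=\pi(h)\mathbf{1}\) gives \(|P^{\mu}f(g)-P^{\mu}f(h)|\leq2\|f\|_{\infty}\|\pi(h^{-1}g)\mathbf{1}-\mathbf{1}\|_{2}\), which is controlled by \(h^{-1}g\), not \(hg^{-1}\).

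Your version amounts to \(\sup_{g}|P^{\mu}f(kg)-P^{\mu}f(g)|\leq2\|f\|_{\infty}\|\pi(k)\mathbf{1}-\mathbf{1}\|_{2}\), and it fails in general. Let \(\mathrm{Aff}(\mathbb{R})\) act on \(\mathbb{R}\) with \(\mu\) the standard Gaussian and \(f=\mathbf{1}_{(0,\infty)}\). Then \(P^{\mu}f(a,b)=\Phi(b/a)\), where \(\Phi\) is the normal distribution function. For \(k=(1,t)\) with \(t>0\) and \(g=(a,0)\), one gets \(P^{\mu}f(kg)-P^{\mu}f(g)=\Phi(t/a)-\tfrac{1}{2}\to\tfrac{1}{2}\) as \(a\to0^{+}\), although \(\|\pi(k)\mathbf{1}-\mathbf{1}\|_{2}\to0\) as \(t\to0\). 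For the same reason, your closing claim of uniform continuity ``under either convention'' is wrong. In general \(P^{\mu}f\) is uniformly continuous only in the one-sided sense \(\sup_{g}|P^{\mu}f(gk)-P^{\mu}f(g)|\to0\) as \(k\to e_{G}\).

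The repair is the argument in your last sentence, and it is exactly the paper's proof. Use the estimate only at the identity:
\[|P^{\mu}F(k)-P^{\mu}F(e_{G})|\leq\|F\|_{\infty}\|\varrho_{k^{-1}}-\mathbf{1}\|_{1}\leq2\|F\|_{\infty}\|\pi(k)\mathbf{1}-\mathbf{1}\|_{2}.\]
Apply it to \(F=f\circ g\), which has the same essential supremum by quasi-invariance. This gives \(|P^{\mu}f(gk)-P^{\mu}f(g)|\leq2\|f\|_{\infty}\|\pi(k)\mathbf{1}-\mathbf{1}\|_{2}\) uniformly in \(g\). This step happens to be immune to your sign slip, since \(\|\pi(k^{-1})\mathbf{1}-\mathbf{1}\|_{2}=\|\pi(k)\mathbf{1}-\mathbf{1}\|_{2}\) by unitarity. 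The \(hg^{-1}\) estimate and the two-sided claim should be dropped.
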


\begin{proof}
For (1), note first that \(\pi\) is independent of the chosen version \(\varrho\), as an \(L^{2}\)-valued map, so choosing a strict version it is straightforward that \(\pi\) is a unitary representation. In order to see that \(\pi\) is weakly continuous, note \(\pi\) forms a homomorphism between Polish groups, so by Pettis' automatic continuity property \cite[Thm.~(9.10)]{kechris2012descriptive} it suffices to show that \(\pi\) is weakly measurable. To see that, we check that for arbitrary \(u,v\in L^{2}\left(\mu\right)\) which are defined measurably pointwise, the function
\[F:G\to\mathbb{R},\quad F\left(g\right):=\left\langle \pi\left(g\right)u,v\right\rangle_{L^{2}\left(\mu\right)},\]
is measurable. For \(C>0\) let the truncation \(\chi_{C}\left(t\right)=-C\cdot\mathbf{1}_{\left(-\infty,-C\right)}+t\cdot\mathbf{1}_{\left[-C,C\right]}+C\cdot\mathbf{1}_{\left(C,+\infty\right)}\), and define
\[F_{C}:G\to\mathbb{R},\quad F_{C}\left(g\right):=\int_{X}\chi_{C}\left(\pi\left(g\right)u\left(x\right)\cdot v\left(x\right)\right)d\mu\left(x\right).\]
Since \(\left(g,x\right)\mapsto g^{-1}.x\) and \(\left(g,x\right)\mapsto\varrho_{g^{-1}}\left(x\right)\) are measurable and the integrand is bounded, \(F_{C}\) is measurable \cite[Thm.~(17.25)]{kechris2012descriptive}. Now fix \(g\in G\). By the Cauchy--Schwarz inequality, \(x\mapsto \pi\left(g\right)u\left(x\right)\cdot v\left(x\right)\) belongs to \(L^{1}\left(\mu\right)\), so by dominated convergence \(F\left(g\right)=\lim_{C\to+\infty}F_{C}\left(g\right)\). Hence \(F\) is measurable.

For (2), fix \(f\in L^{\infty}\left(\mu\right)\). Pick a strict version \(\varrho\) of the Radon--Nikodym cocycle, and for \(g,h\in G\) write
\begin{align*}
\left|P^{\mu}f\left(gh\right)-P^{\mu}f\left(g\right)\right|
&=\left|\int_{X}f\left(gh.x\right)d\mu\left(x\right)-\int_{X}f\left(g.x\right)d\mu\left(x\right)\right|=\left|\int_{X}f\left(g.x\right)\left(\varrho_{h^{-1}}\left(x\right)-1\right)d\mu\left(x\right)\right|\\
&\leq\int_{X}\left|f\left(g.x\right)\right|\left|\varrho_{h^{-1}}\left(x\right)-1\right|d\mu\left(x\right)\leq\left\Vert f\right\Vert_{L^{\infty}\left(\mu\right)}\cdot\left\Vert\varrho_{h^{-1}}-\mathbf{1}\right\Vert_{L^{1}\left(\mu\right)}.
\end{align*}
Now for every \(h\in G\), by H\"{o}lder's inequality we have
\[\left\Vert\varrho_{h^{-1}}-\mathbf{1}\right\Vert_{L^{1}\left(\mu\right)}\leq\left\Vert\sqrt{\varrho_{h^{-1}}}+\mathbf{1}\right\Vert_{L^{2}\left(\mu\right)}\cdot\left\Vert\sqrt{\varrho_{h^{-1}}}-\mathbf{1}\right\Vert_{L^{2}\left(\mu\right)}\leq 2\cdot\left\Vert\sqrt{\varrho_{h^{-1}}}-\mathbf{1}\right\Vert_{L^{2}\left(\mu\right)}=2\cdot\left\Vert\pi\left(h\right)\mathbf{1}-\mathbf{1}\right\Vert_{L^{2}\left(\mu\right)},\]
where the second inequality uses \(\int_{X}\varrho_{h^{-1}}d\mu=1\). As \(\pi\) is strongly continuous, the proof is complete.
\end{proof}

\subsection{Stationary actions}

Let \(\left(G,\theta\right)\) be a measured group. A {\bf stationary \(\left(G,\theta\right)\)-space} is a standard probability space \(\left(X,\mu\right)\), where \(X\) is a Borel \(G\)-space and \(\mu\) has the property that it equals the image measure of \(\theta\otimes\mu\) under the action map. More concretely, \(\mu\) satisfies
\[\theta\ast\mu=\mu,\]
where \(\theta\ast\mu\left(A\right)=\int_{G}g\mu\left(A\right)d\theta\left(g\right)\).

\begin{fct}
If \(\left(X,\mu\right)\) is a stationary \(\left(G,\theta\right)\)-space then \(\mu\) is quasi-invariant, and
\[\int_{G}\frac{dg\mu}{d\mu}\left(x\right)d\theta\left(g\right)=1\text{ for }\mu\text{-a.e. }x\in X.\]
\end{fct}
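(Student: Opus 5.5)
The argument uses only that \(\theta\) is absolutely continuous with respect to Haar measure and that \(\mu=\theta\ast\mu\). First we show quasi-invariance, and then we compute the integral of the Radon--Nikodym derivatives.

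\textbf{Quasi-invariance.} Fix \(g\in G\) and a Borel set \(A\subseteq X\). Applying stationarity to the set \(g^{-1}.A\) gives
\[\mu\left(g^{-1}.A\right)=\int_{G}\mu\left(h^{-1}g^{-1}.A\right)d\theta\left(h\right)=\int_{G}\mu\left(k^{-1}.A\right)d\left(g\theta\right)\left(k\right),\]
after the change of variables \(k=gh\). Likewise \(\mu\left(A\right)=\int_{G}\mu\left(k^{-1}.A\right)d\theta\left(k\right)\). Because \(\theta\) lies in the Haar measure class, \(g\theta\ll m_{G}\) for every \(g\).

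Suppose \(\mu\left(A\right)=0\). Then \(\mu\left(k^{-1}.A\right)=0\) for \(\theta\)-a.e. \(k\). The map \(k\mapsto\mu\left(k^{-1}.A\right)\) is measurable by the same Fubini argument as in Fact~\ref{fct:unifcont}. We want to deduce that it vanishes \(m_{G}\)-a.e., and then the first display gives \(\mu\left(g^{-1}.A\right)=0\).

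This transfer is the main obstacle, since \(\theta\) need not be equivalent to Haar measure; it only has density \(\varphi\). We handle it by induction through convolution powers. Stationarity gives \(\theta^{\ast n}\ast\mu=\mu\) for every \(n\), so \(\mu\left(k^{-1}.A\right)=0\) for \(\theta^{\ast n}\)-a.e. \(k\), for all \(n\). Hence the zero set \(Z=\left\{k:\mu\left(k^{-1}.A\right)=0\right\}\) carries every \(\theta^{\ast n}\). Moreover \(Z\) is closed under left multiplication by \(\theta\)-a.e. element: if \(\mu\left(k^{-1}.A\right)=0\), then stationarity applied to \(k^{-1}.A\) gives \(\mu\left(h^{-1}k^{-1}.A\right)=0\) for \(\theta\)-a.e. \(h\).

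To conclude that \(Z\) is \(m_{G}\)-conull we use admissibility. The densities of \(\theta^{\ast n}\) for \(n\geq2\) are convolutions, hence continuous or at least lower semicontinuous if we replace \(\varphi\) by \(\min\left(\varphi,1\right)\). Their positivity sets are therefore open, and by admissibility these open sets exhaust \(G\) up to a null set. One also needs the nonnegative measurable function \(\psi=\sum_{n}2^{-n}\theta^{\ast n}\), and then a similar sum over left translates, to have positive density almost everywhere. An alternative route avoids this: work with the measure class \(\left[\sum_{n}2^{-n}\theta^{\ast n}\ast\mu\right]=\left[\mu\right]\), and note that \(\mu\left(A\right)=0\) if and only if \(\mu\left(k^{-1}.A\right)=0\) for \(m_{G}\)-a.e. \(k\) in an open set generating \(G\). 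Left-invariance of Haar measure then propagates this to all of \(G\).

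\textbf{Integral identity.} Once quasi-invariance is established, fix a strict version of \(dg\mu/d\mu\). For every Borel set \(A\), Tonelli and stationarity give
\[\int_{A}\int_{G}\frac{dg\mu}{d\mu}\left(x\right)d\theta\left(g\right)d\mu\left(x\right)=\int_{G}g\mu\left(A\right)d\theta\left(g\right)=\mu\left(A\right).\]
The inner function is measurable by joint measurability of the version. Since the identity holds for all \(A\), the inner integral equals \(1\) for \(\mu\)-a.e. \(x\).
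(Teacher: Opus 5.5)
Your reduction is right: for a $\mu$-null $A$, put $Z_A=\{k:\mu(k^{-1}.A)=0\}$. It suffices to show that $Z_A$ is $m_G$-conull, because then $\mu(g^{-1}.A)=\int\mu(k^{-1}.A)\,d(g\theta)(k)=0$ by $g\theta\ll m_G$. Your integral identity is the same Fubini computation as in the paper. But the step that carries all the content is only asserted, so there is a genuine gap.

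Admissibility only says that $\bigcup_n\mathrm{supp}(\theta^{\ast n})$ is dense. An open dense set can have arbitrarily small Haar measure, so ``these open sets exhaust $G$ up to a null set'' does not follow. Even density of the positivity sets of your continuous minorants needs an argument, since admissibility is a statement about supports. Your alternative route has the same hole. It presupposes an open set on which $\psi=\sum_n 2^{-n}d\theta^{\ast n}/dm_G$ is a.e.\ positive and which generates $G$ as a semigroup, and neither property is established. (Also, the closure property you note is under right multiplication: $k\in Z_A$ implies $kh\in Z_A$ for $\theta$-a.e.\ $h$.)

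The missing idea is that the positivity sets form an open \emph{subsemigroup}, and an open dense subsemigroup is all of $G$.

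\emph{Setup.} Let $(u\star v)(x):=\int_G u(y)v(y^{-1}x)\,dm_G(y)$, so that $d\theta^{\ast n}/dm_G=\varphi^{\star n}$. Put $f:=\min(\varphi,1)$ and $c_n:=f^{\star n}$. For $n\geq 2$, $c_n$ is continuous (it is an $L^1\star L^\infty$ convolution) and $c_n\leq\varphi^{\star n}$. Let $O:=\bigcup_{n\geq2}\{c_n>0\}$.

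\emph{$O$ is an open subsemigroup.} If $c_n(a)>0$ and $c_m(b)>0$, then $c_{n+m}(ab)=\int c_n(y)c_m(y^{-1}ab)\,dm_G(y)>0$, since the integrand is continuous in $y$ and positive at $y=a$.

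\emph{$O$ is dense.} Take $s_1,\dots,s_n\in\mathrm{supp}\,\theta$ with $n\geq 2$, and arbitrarily small neighborhoods $U_i\ni s_i$. The function $(f\mathbf{1}_{U_1})\star\dots\star(f\mathbf{1}_{U_n})$ is bounded by $c_n$, is continuous, has positive integral $\prod_i\int_{U_i}f\,dm_G$, and vanishes off $U_1\cdots U_n$. Hence $(\mathrm{supp}\,\theta)^n\subseteq\overline{O}$ for every $n\geq 2$. Consequently, for any $s\in\mathrm{supp}\,\theta$, $\overline{O}\supseteq s\cdot\overline{\bigcup_{n\geq1}(\mathrm{supp}\,\theta)^n}=sG=G$.

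\emph{$O=G$.} For $g\in G$, the nonempty open set $gO^{-1}$ meets the dense set $O$, so $g\in O\cdot O\subseteq O$.

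Thus $\psi>0$ everywhere, and $0=\mu(A)=\int\mu(k^{-1}.A)\,\psi(k)\,dm_G(k)$ gives exactly the $m_G$-conullness of $Z_A$ that you need.

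For comparison, the paper passes to a compact model. It then propagates the $\theta^{\ast n}$-a.e.\ vanishing of the upper semicontinuous function $g\mapsto g\mu(K)$ (for compact $\mu$-null $K$) to all of $G$, using only the density of $\bigcup_n\mathrm{supp}(\theta^{\ast n})$. That route never uses $\theta\ll m_G$, and as written its propagation steps would need lower rather than upper semicontinuity. So you were right that absolute continuity must enter exactly at the step you flagged, and the semigroup argument above is what supplies it.
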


\begin{proof}
We may assume that \(X\) is a compact metric space and that \(G\) acts by homeomorphisms (see Theorem \ref{thm:Vara} below), hence the action of \(G\) on the space of probability measures on \(X\), given by \(g\nu=g_{\ast}\nu\), is continuous in the weak-\(\ast\) topology (see \cite[\S17.E]{kechris2012descriptive}). By the inner-regularity of \(\mu\) and the continuity of the action of \(G\), it suffices to show that for every compact set \(K\) in \(X\) with \(\mu\left(K\right)=0\), it holds that \(g\mu\left(K\right)=0\) for every \(g\in G\). Fix an arbitrary compact set \(K\) with \(\mu\left(K\right)=0\), and define a function \(f_{K}:G\to\left[0,1\right]\) by \(f_{K}\left(g\right)=g\mu\left(K\right)\). Then \(f_{K}\) is upper-semicontinuous by Portmanteau theorem (see \cite[Cor.~(17.21)]{kechris2012descriptive}). Moreover, by the \(\theta\)-stationarity of \(\mu\), for every \(n\geq 1\) we have
\[0=\mu\left(K\right)=\theta^{\ast n}\ast\mu\left(K\right)=\int_{G}f_{K}\left(g\right)d\theta^{\ast n}\left(g\right).\]
Then \(f_{K}=0\) on a \(\theta^{\ast n}\)-conull set, and by upper-semicontinuity \(f_{K}\equiv 0\) on \(\mathrm{Supp}\left(\theta^{\ast n}\right)\). We thus found that \(f_{K}\equiv 0\) on \(\bigcup_{n\geq 1}\mathrm{Supp}\left(\theta^{\ast n}\right)\), which is dense in \(G\) by admissibility, and by upper-semicontinuity it follows that \(f_{K}\equiv 0\) on all of \(G\). This completes the proof of quasi-invariance of \(\mu\). To verify the desired identity, note that by Fubini's theorem and \(\theta\)-stationarity, for every \(f\in L^{\infty}\left(\mu\right)\) it holds that
\begin{align*}
\int_{X}f\left(x\right)\Big(\int_{G}\tfrac{dg\mu}{d\mu}\left(x\right)d\theta\left(g\right)\Big)d\mu\left(x\right)
&=\int_{G}\int_{X}f\left(x\right)\tfrac{dg\mu}{d\mu}\left(x\right)d\mu\left(x\right)d\theta\left(g\right)\\
&=\int_{G}\int_{X}f\left(g.x\right)d\mu\left(x\right)d\theta\left(g\right)=\int_{X}f\left(x\right)d\mu\left(x\right).\qedhere
\end{align*}
\end{proof}

Every stationary \(\left(G,\theta\right)\)-space \(\left(X,\mu\right)\) is associated with the {\bf Poisson transform}, which is the operator
\[P_{\theta}^{\mu}:L^{\infty}\left(\mu\right)\to \mathcal{H}^{\infty}\left(G,\theta\right),\quad P_{\theta}^{\mu}f\left(g\right):=\int_{X}f\left(g.x\right)d\mu\left(x\right).\]
It is a linear, unital, positivity preserving contraction. Its image indeed lies in \(\mathcal{H}^{\infty}\left(G,\theta\right)\), and by Fact~\ref{fct:unifcont}(2) it consists of uniformly continuous functions. Note that \(\mu\) is \(G\)-invariant precisely when \(P_{\theta}^{\mu}f\) is constant for every \(f\in L^{\infty}\left(X,\mu\right)\). In particular, when \(G\) is Liouville, i.e. \(\mathcal{H}^{\infty}\left(G,\theta\right)\) consists of constant functions, every stationary \(\left(G,\theta\right)\)-space is in fact a probability preserving \(G\)-space.

\smallskip

Every measured group \(\left(G,\theta\right)\) is associated a canonical stationary \(\left(G,\theta\right)\)-space called the {\bf Poisson boundary}, denoted by \(\left(\bm{B},\bm{\nu}\right)\), which is uniquely determined by the property that its Poisson transform
\[P_{\theta}:=P_{\theta}^{\bm{\nu}}:L^{\infty}\left(\bm{B},\bm{\nu}\right)\to\mathcal{H}^{\infty}\left(G,\theta\right)\]
forms a surjective isometry of Banach spaces. It is useful to recall that \(P_{\theta}\) is in fact bi-positivity preserving: if \(u=P_{\theta}f\) and \(u\) is nonnegative, then also \(f\) is nonnegative. This follows from the \emph{Probabilistic Fatou theorem} (see \cite[Thm.~24.14]{Woess2000}). The stationary \(\left(G,\theta\right)\)-space \(\left(\bm{B},\bm{\nu}\right)\) is always ergodic (see e.g. \cite[\S5]{zimmer1978amenable}). The construction of the Poisson boundary is due to Furstenberg \cite{furstenberg1963poisson}; see also \cite[\S3.4]{furman2002random} and the references therein. The following fact is a result of the Hahn--Banach theorem (see \cite[Lemma~2.2]{bjorklund2014five}).

\begin{fct}\label{fct:TotalRN}
The linear subspace of \(L^{1}\left(\bm{B},\bm{\nu}\right)\) spanned by \(\big\{dg\bm{\nu}/d\bm{\nu}:g\in G\big\}\) is dense.
\end{fct}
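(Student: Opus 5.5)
The plan is a Hahn--Banach duality argument. The dual of \(L^{1}\left(\bm{B},\bm{\nu}\right)\) is \(L^{\infty}\left(\bm{B},\bm{\nu}\right)\). By Hahn--Banach, a linear subspace of \(L^{1}\) is dense exactly when its annihilator in \(L^{\infty}\) is trivial. So let \(V\) be the span of \(\{dg\bm{\nu}/d\bm{\nu}:g\in G\}\), and fix \(f\in L^{\infty}\left(\bm{B},\bm{\nu}\right)\) with
\[\int_{\bm{B}}f\cdot\frac{dg\bm{\nu}}{d\bm{\nu}}\,d\bm{\nu}=0\quad\text{for all }g\in G.\]
It then suffices to show that \(f=0\) \(\bm{\nu}\)-a.e.

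The key observation is that these pairings are exactly the values of the Poisson transform. For each \(g\in G\), the definition of the pushforward gives
\[\int_{\bm{B}}f\cdot\frac{dg\bm{\nu}}{d\bm{\nu}}\,d\bm{\nu}=\int_{\bm{B}}f\,dg\bm{\nu}=\int_{\bm{B}}f\left(g.x\right)d\bm{\nu}\left(x\right)=P_{\theta}f\left(g\right).\]
Here the quasi-invariance of \(\bm{\nu}\), guaranteed by the Fact on stationary spaces, ensures that \(dg\bm{\nu}/d\bm{\nu}\) is a well-defined element of \(L^{1}\left(\bm{\nu}\right)\). Hence the annihilation hypothesis says \(P_{\theta}f\equiv0\) pointwise on \(G\). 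This is a genuine pointwise statement, since \(P_{\theta}f\) is defined for every \(g\) and is in fact continuous by Fact~\ref{fct:unifcont}(2).

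Now I invoke the defining property of the Poisson boundary: \(P_{\theta}:L^{\infty}\left(\bm{B},\bm{\nu}\right)\to\mathcal{H}^{\infty}\left(G,\theta\right)\) is an isometry. Therefore \(\left\Vert f\right\Vert_{L^{\infty}}=\left\Vert P_{\theta}f\right\Vert_{\infty}=0\), so \(f=0\), and \(V\) is dense in \(L^{1}\left(\bm{B},\bm{\nu}\right)\).

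There is no serious obstacle. The only point needing care is the norm on \(\mathcal{H}^{\infty}\). Because \(P_{\theta}f\) is continuous, its sup-norm and its Haar-essential sup-norm coincide, so vanishing at every \(g\) certainly gives norm zero in either convention. Even without appealing to the isometry, injectivity alone suffices: the bi-positivity mentioned before the statement applied to both \(P_{\theta}f=0\ge0\) and \(-P_{\theta}f=0\ge0\) gives \(f\geq0\) and \(-f\geq0\), hence \(f=0\).
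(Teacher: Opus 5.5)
Your proposal is correct and takes essentially the paper's route. The paper gives no written proof beyond attributing the fact to the Hahn--Banach theorem (with a reference). Its content is what you wrote: an annihilating \(f\in L^{\infty}\) satisfies \(P_{\theta}f\equiv 0\), so \(f=0\) because the Poisson transform of the Poisson boundary is isometric (or, as you note, bi-positivity preserving).
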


\section{Conservativity and Poincar\'{e} recurrence}\label{sct:conserv}

We recall the basics of conservativity and Poincar\'{e} recurrence in the context of lcsc groups following \cite{avraham2024hopf}. Let \(G\) be an lcsc group with a left Haar measure \(m_{G}\), and let \(\left(X,\mu\right)\) be a nonsingular \(G\)-space. For a Borel set \(A\subset X\) define
\[G_{A}:=\{g\in G:\mu\left(g.A\cap A\right)>0\},\]
and for a point \(x\in A\) define
\[R_{A}\left(x\right):=\left\{g\in G:x\in g.A\right\}.\]
Then \(\left(X,\mu\right)\) is {\bf conservative}, also {\bf Poincar\'{e} recurrent}, if the following equivalent conditions hold:
\begin{enumerate}
    \item For every Borel set \(A\) with \(\mu\left(A\right)>0\), \(G_{A}\) is unbounded (equivalently, \(m_{G}\left(G_{A}\right)=+\infty\)).
    \item For every Borel set \(A\) and \(\mu\)-a.e. \(x\in A\), \(R_{A}\left(x\right)\) is unbounded (equivalently, \(m_{G}\left(R_{A}\left(x\right)\right)=+\infty\)).
    \item There are no positive measure transient sets: For every Borel set \(W\) such that \(R_{W}\left(x\right)\) is bounded (equivalently, \(m_{G}\left(R_{W}\left(x\right)\right)<+\infty\)) for all \(x\in X\), one necessarily has \(\mu\left(W\right)=0\).
    \item There exists (equivalently, for every) positive function \(f\in L^{1}\left(\mu\right)\), such that
    \[\int_{G}f\left(g.x\right)\varpi_{g}\left(x\right)dm_{G}\left(g\right)=+\infty\text{ for }\mu\text{-a.e. }x\in X,\]
    where \(\varpi\) is any strict version of the Radon--Nikodym cocycle of \(\left(X,\mu\right)\).
\end{enumerate}
The first property is called \emph{Poincar\'{e} recurrence}; the second and third properties are called \emph{Halmos' recurrence theorem}; the fourth property is due to Hopf. For the equivalence of the above properties in discrete countable groups see \cite{aaronson1997introduction} and in lcsc groups see \cite{avraham2024hopf}.
\smallskip

We present two proofs of Theorem~\ref{mthm:conservative}, namely that every stationary action of a noncompact measured group is conservative. The first is an adaptation of Furstenberg--Glasner's proof from the discrete case, that establishes Poincar\'{e} recurrence using the martingale convergence theorem. The second proof uses the ergodicity of the Poisson boundary, and it is based on the observation that transient sets give rise to invariant functions on the Poisson boundary.

\subsection{Furstenberg--Glasner's proof}

For a measured group \(\left(G,\theta\right)\), consider the probability space 
\[\left(\Omega,\mathbb{P}\right):=\left(G^{\mathbb{N}},\theta^{\otimes\mathbb{N}}\right),\]
and define the random walk \(\left(z_{n}\right)_{n=0}^{\infty}\) as a sequence of random variables defined on \(\left(\Omega,\mathbb{P}\right)\) by
\[z_{n}\left(\omega\right):=\omega_{0}\omega_{1}\dotsm\omega_{n-1}\text{ for }\omega=\left(\omega_{0},\omega_{1},\dotsc\right)\in\Omega.\]
Denote the (non-invertible) shift transformation
\[\sigma:\Omega\to\Omega,\quad\sigma:\left(\omega_{0},\omega_{1},\omega_{2}\right)\mapsto\left(\omega_{1},\omega_{2},\dotsc\right).\]
Let \(\left(\mathcal{F}_{n}\right)_{n\geq 1}\) be the filtration given by \(\mathcal{F}_{n}=\sigma\left(\omega_{0},\omega_{1},\dotsc,\omega_{n-1}\right)\).

\begin{lem}\label{lem:escape}
If \(\left(G,\theta\right)\) is a noncompact measured group, then for every compact set \(C\) in \(G\),
\[\mathbb{P}\left(\liminf_{n\to\infty}\left\{z_{n}\in C\right\} \right)=0.\]
In particular, for every countable compact exhaustion \(C_{1}\subseteq C_{2}\subseteq\dotsm\) of \(G\) one has
\[\mathbb{P}\left(\Omega_{o}\right)=1,\text{ where }\Omega_{o}:=\bigcap\nolimits_{k\geq 1}\{z_{n}\notin C_{k}\text{ infinitely often}\}.\]
\end{lem}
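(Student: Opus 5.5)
Since \(\liminf_{n}\{z_{n}\in C\}=\bigcup_{N}\bigcap_{n\geq N}\{z_{n}\in C\}\), it suffices to show that \(\mathbb{P}\left(\bigcap_{n\geq N}\{z_{n}\in C\}\right)=0\) for each \(N\). By the Markov property, conditioning on \(\mathcal{F}_{N}\), this reduces to showing that
\[\sup_{g\in C}\mathbb{P}\left(gz_{n}\in C\text{ for all }n\geq 0\right)=0,\]
i.e. to showing that the walk started anywhere in \(C\) cannot stay in \(C\) forever. I would in fact prove a uniform geometric bound: there are \(m\geq1\) and \(\delta>0\) with
\[\mathbb{P}\left(gz_{m}\notin C\right)\geq\delta\quad\text{for all }g\in C.\]
Granting this, the Markov property applied at times \(m,2m,3m,\dots\) gives
\[\mathbb{P}\left(gz_{km}\in C\text{ for }k=1,\dots,j\right)\leq(1-\delta)^{j}\to0.\]

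For the uniform bound, set \(D=C^{-1}C\), which is compact. Since \(G\) is noncompact, there is \(h\notin D\), and then \(Ch\cap C=\emptyset\): if \(ch=c'\), then \(h=c^{-1}c'\in D\). Choose an open neighborhood \(V\) of \(h\) with \(CV\cap C=\emptyset\); this is possible by compactness of \(C\), since \(C^{-1}C\) is closed and \(h\) lies in its open complement. By admissibility, \(\bigcup_{n}\mathrm{supp}(\theta^{\ast n})\) is dense, so some \(\mathrm{supp}(\theta^{\ast m})\) meets \(V\). Hence \(\delta:=\theta^{\ast m}(V)>0\), because an open set meeting the support has positive measure. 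Then for every \(g\in C\), \(gz_{m}\in gV\subseteq CV\subseteq G\setminus C\) on the event \(\{z_{m}\in V\}\), which has probability \(\delta\). This yields the uniform bound; note that no continuity in \(g\) is needed, because \(V\) was chosen to work uniformly over \(C\).

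For the "in particular" statement, apply the first part to each \(C_{k}\). Almost surely, \(z_{n}\notin C_{k}\) for infinitely many \(n\), since the complement of this event is exactly \(\liminf_{n}\{z_{n}\in C_{k}\}\). Intersecting over countably many \(k\) gives \(\mathbb{P}(\Omega_{o})=1\).

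The main point to check is the choice of \(h\) and \(V\) with \(CV\cap C=\emptyset\), where noncompactness enters, together with the correct bookkeeping of the Markov property. The latter uses that \(z_{N+n}=z_{N}\cdot(\omega_{N}\cdots\omega_{N+n-1})\), where the second factor is independent of \(\mathcal{F}_{N}\) and has the law of \(z_{n}\).
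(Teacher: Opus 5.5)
Your proof is correct and follows essentially the same route as the paper. Both arguments pick $h\notin C^{-1}C$ and a neighborhood of $h$ of positive $\theta^{\ast m}$-mass (the paper builds it as a product $U_{1}\cdots U_{m}$ of neighborhoods of support points, while you get $\theta^{\ast m}(V)>0$ directly from admissibility), and then use independence of disjoint blocks of $m$ increments to bound the probability of staying in $C$ by $(1-\delta)^{k}$ for every $k$.
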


\begin{proof}[Proof of Lemma~\ref{lem:escape}]
Fix a compact set \(C\subset G\). Let \(\Sigma\left(\theta\right)\) be the semigroup generated by \(\mathrm{supp}\left(\theta\right)\). Since \(\left(G,\theta\right)\) is noncompact, \(\Sigma\left(\theta\right)\) is not relatively compact and in particular it is not contained in the compact set \(C^{-1}C\), so pick some element
\[h:=s_{1}\dotsm s_{m}\in \Sigma\left(\theta\right)\backslash C^{-1}C,\quad s_{1},\dotsc,s_{m}\in\mathrm{supp}\left(\theta\right).\]
Pick an open set \(h\in W\subset G\backslash C^{-1}C\), and by continuity of multiplication there are open sets \(s_{1}\in U_{1},\dotsc,s_{m}\in U_{m}\) so that \(U_{1}\dotsm U_{m}\subseteq W\). In particular we have
\[p:=\theta\left(U_{1}\right)\dotsm\theta\left(U_{m}\right)>0.\]
For \(n\geq 1\) consider the event
\[E_{n}:=\{\omega_{n}\in U_{1},\omega_{n+1}\in U_{2},\dotsc,\omega_{n+m-1}\in U_{m}\}\in\mathcal{F}_{n+m},\]
and note that \(\mathbb{P}\left(E_{n}\right)=p\) while \(E_{n}\) is independent of \(\mathcal{F}_{n}\). We now claim that
\begin{equation}\label{eq:znm}
\mathbb{P}\left(z_{n}\in C,\,z_{n+m}\in C\mid\mathcal{F}_{n}\right)\leq1-\mathbb{P}\left(E_{n}\right)=1-p\text{ for every }n\geq 1.
\end{equation}
Indeed, if \(z_{n}\in C\) and \(E_{n}\) occurs, then \(v:=\omega_{n}\omega_{n+1}\dotsm\omega_{n+m-1}\in U_{1}\dotsm U_{m}\subset G\backslash C^{-1}C\), so that \(v\notin C^{-1}C\) and hence \(z_{n}v\notin C\). This means that \(z_{n+m}=z_{n}v\notin C\), namely \(\{z_{n}\in C,\,z_{n+m}\in C\}\subseteq \{z_{n}\in C\}\cap E_{n}^{\complement}\), concluding \eqref{eq:znm}. Consider the event
\[D_{n}=\left\{z_{n}\in C,z_{n+1}\in C,z_{n+2}\in C,\dotsc\right\}.\]
Then for every \(k\geq 1\) one has
\[\{z_{n}\in C,\,z_{n+m}\in C,\dotsc,z_{n+km}\in C\}\subseteq E_{n}^{\complement}\cap E_{n+m}^{\complement}\dotsm\cap E_{n+\left(k-1\right)m}^{\complement}.\]
Since the events \(E_{n},E_{n+m},\dotsc,E_{n+\left(k-1\right)m}\) depend on disjoint coordinates of \(\omega\), they are independent, and each is independent of \(\mathcal{F}_{n}\). Therefore, \(\mathbb{P}\left(D_{n}\right)\leq\left(1-p\right)^{k}\) for every \(k\geq 1\), and thus \(\mathbb{P}\left(D_{n}\right)=0\). Since \(\liminf_{n\to\infty}\left\{z_{n}\in C\right\} =\bigcup_{n\geq1}D_{n}\), this completes the proof.
\end{proof}

\begin{proof}[Proof of Theorem~\ref{mthm:conservative}]
Let \(\left(X,\mu\right)\) be a stationary \(\left(G,\theta\right)\)-space. Fix a Borel set \(A\subset X\) with \(\mu\left(A\right)>0\). Define the random variables
\[M_{n}:=z_{n}\mu\left(A\right)=\mu\left(z_{n}^{-1}.A\right),\quad n\geq 1.\]
Using the \(\theta\)-stationarity of \(\mu\), for every \(n\geq 1\) we have
\[\mathbb{E}\left[M_{n+1}\mid\mathcal{F}_{n}\right]=\int_{G}\mu\big(\left(z_{n}h\right)^{-1}.A\big)d\theta\left(h\right)=\int_{G}h\mu\left(z_{n}^{-1}.A\right)d\theta\left(h\right)=\mu\left(z_{n}^{-1}.A\right)=M_{n},\]
and therefore \(\left(M_{n}\right)_{n\geq 1}\) is an \(\left(\mathcal{F}_{n}\right)_{n\geq 1}\)-martingale. The \(\theta\)-stationarity of \(\mu\) gives also \(\mathbb{E}\left[M_{1}\right]=\mu\left(A\right)\), so by the martingale convergence theorem there is \(M\in L^{1}\left(\mathbb{P}\right)\) so that \(M_{n}\to M\) \(\mathbb{P}\)-a.s. as \(n\to+\infty\), and \(\mathbb{E}\left[M\right]=\mu\left(A\right)\). Put \(\delta:=\mu\left(A\right)/2\) and consider the event \(E:=\{M>\delta\}\). Since \(\mathbb{P}\left(E\right)>0\), for every \(\omega\in E\) there is \(n\left(\omega\right)\geq 1\) such that \(M_{n}\left(\omega\right)\geq\delta\) for all \(n\geq n\left(\omega\right)\).

We now deduce the conservativity by showing that \(G_{A}=\{g\in G:\mu\left(g.A\cap A\right)>0\) is not relatively compact. Fix an arbitrary compact set \(K\subset G\). Fix a compact exhaustion \(C_{1}\subseteq C_{2}\subseteq \dotsm\) of \(G\) so that every compact set in \(G\) is contained in \(C_{k}\) eventually,\footnote{This is possible by taking balls with respect to a compatible proper metric on \(G\), which exists by Struble's theorem \cite{struble1974metrics}.} and let \(\Omega_{o}\) be as in Lemma~\ref{lem:escape}. Since \(\mathbb{P}\left(\Omega_{o}\right)=1\) we have \(\mathbb{P}\left(E\cap\Omega_{o}\right)=\mathbb{P}\left(E\right)>0\). Choose \(N\geq 1\) so that \(N\delta>1\), and we will construct random times
\[n\left(\omega\right)=n_{1}\left(\omega\right)<n_{2}\left(\omega\right)<\dotsm<n_{N}\left(\omega\right)\text{ for }\omega\in E\cap\Omega_{o},\]
subject to the property that for all \(1\leq m\leq N\),
\[z_{n_{m+1}\left(\omega\right)}\left(\omega\right)\notin Kz_{n_{j}\left(\omega\right)}\left(\omega\right),\quad1\leq j\leq m.\]
Assume \(n_{1}\left(\omega\right)<\dotsm<n_{m}\left(\omega\right)\) are defined for some \(m<N\), and consider the compact set
\[K_{m}:=\bigcup\nolimits_{j=1}^{m}Kz_{n_{j}}\left(\omega\right).\]
Let \(k\geq 1\) be such that \(K_{m}\subseteq C_{k}\). Since \(\omega\in\Omega_{o}\), there can be found \(n>n_{m}\left(\omega\right)\) such that \(z_{n}\left(\omega\right)\notin C_{k}\), hence \(z_{n}\left(\omega\right)\notin K_{m}\). Then set \(n_{m+1}\left(\omega\right):=n\). This completes the construction of \(n_{1}\left(\omega\right)<\dotsm<n_{N}\left(\omega\right)\).

Fix any \(\omega\in E\cap\Omega_{o}\), and for \(1\leq m\leq N\) define \(A_{m}:=z_{n_{m}\left(\omega\right)}\left(\omega\right)^{-1}.A\). Since \(n_{m}\left(\omega\right)\geq n\left(\omega\right)\) we have
\[\mu\left(A_{m}\right)=M_{n_{m}\left(\omega\right)}\left(\omega\right)\geq\delta.\]
Since \(N\delta>1\), there can be found \(1\leq l<m\leq N\) with \(\mu\left(A_{l}\cap A_{m}\right)>0\), and we then claim that
\[g:=z_{n_{m}\left(\omega\right)}\left(\omega\right)z_{n_{l}\left(\omega\right)}\left(\omega\right)^{-1}\in G_{A}\cap G\backslash K.\]
First, by construction we have \(g\notin K\). To see that \(g\in G_{A}\), write
\[0<\mu\left(A_{l}\cap A_{m}\right)=\mu\big(z_{n_{l}\left(\omega\right)}\left(\omega\right)^{-1}.\left(A\cap g^{-1}.A\right)\big),\]
and by the nonsingularity also \(\mu\left(A\cap g^{-1}.A\right)>0\), hence \(\mu\left(g.A\cap A\right)>0\), and thus \(g\in G_{A}\).
\end{proof}

\subsection{A Poisson boundary proof}

Let us start with a general lemma on Poisson boundaries.

\begin{lem}\label{lem:infs}
Let \(\left(G,\theta\right)\) be a noncompact measured group and let \(\left(\bm{B},\bm{\nu}\right)\) be its Poisson boundary. Suppose \(\phi\in L^{\infty}\left(\bm{\nu}\right)\) is a nonnegative function with the property that the following function is constant:
\[\Phi:\bm{B}\to\mathbb{R}_{\geq0}\cup\{+\infty\},\quad\Phi\left(b\right):=\int_{G}g\phi\left(b\right)dm_{G}\left(g\right).\]
Then either \(\phi=0\) \(\bm{\nu}\)-a.e. and hence \(\Phi\equiv 0\), or otherwise \(\Phi\equiv+\infty\).
\end{lem}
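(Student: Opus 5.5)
The plan is to integrate \(\Phi\) against \(\bm{\nu}\) and turn the question into one about the bounded harmonic function \(u:=P_{\theta}\phi\). Suppose \(\phi\) is not \(\bm{\nu}\)-a.e. zero and that \(\Phi\equiv c<+\infty\); the aim is a contradiction. Since \(g\phi(b)=\phi(g^{-1}.b)\), Tonelli gives
\[c=\int_{\bm{B}}\Phi\,d\bm{\nu}=\int_{G}\int_{\bm{B}}\phi\left(g^{-1}.b\right)d\bm{\nu}\left(b\right)dm_{G}\left(g\right)=\int_{G}u\left(g^{-1}\right)dm_{G}\left(g\right).\]
Here \(u\in\mathcal{H}^{\infty}\left(G,\theta\right)\) is nonnegative, and it is not identically zero because \(P_{\theta}\) is an isometry. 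So \(u\) is integrable with respect to the right Haar measure \(m_{r}(A):=m_{G}(A^{-1})\). It therefore suffices to show that a nonzero nonnegative \(u\in\mathcal{H}^{\infty}\left(G,\theta\right)\) cannot satisfy \(\int_{G}u\,dm_{r}<+\infty\) when \(G\) is noncompact.

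Next I use the random walk \(z_{n}\) from the Furstenberg--Glasner subsection. The process \(u\left(z_{n}\right)\) is a bounded martingale by harmonicity, so it converges \(\mathbb{P}\)-a.s. to a limit \(U\) with \(\mathbb{E}\left[U\right]=u\left(e_{G}\right)>0\). Hence there are \(\delta>0\) and an event \(E\) with \(\mathbb{P}\left(E\right)>0\) on which \(u\left(z_{n}\right)\geq\delta\) for all large \(n\). The local regularity I would use is the estimate from Fact~\ref{fct:unifcont}(2): \(\left|u\left(gh\right)-u\left(g\right)\right|\leq\left\Vert\phi\right\Vert_{\infty}\,\epsilon\left(h\right)\), with \(\epsilon\left(h\right)\to0\) as \(h\to e_{G}\), uniformly in \(g\). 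Fix a relatively compact identity neighbourhood \(V\) with \(\left\Vert\phi\right\Vert_{\infty}\epsilon\leq\delta/2\) on \(V\). Then \(u\geq\delta/2\) on \(z_{n}V\) for all large \(n\) on \(E\). By Lemma~\ref{lem:escape}, on \(E\cap\Omega_{o}\) I can extract times \(n_{1}<n_{2}<\dotsm\) such that each \(z_{n_{j+1}}\) avoids the compact set \(\bigcup_{i\leq j}z_{n_{i}}\overline{V}\,\overline{V}^{-1}\). The sets \(z_{n_{j}}V\) are then pairwise disjoint, so \(\int_{G}u\,dm_{r}\geq\frac{\delta}{2}\sum_{j}m_{r}\left(z_{n_{j}}V\right)\).

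In the unimodular case \(m_{r}\left(z_{n_{j}}V\right)=m_{G}\left(V\right)>0\) for every \(j\), so the sum diverges and we have the contradiction. The main obstacle is the modular function: in general \(m_{r}\left(gV\right)=\Delta\left(g^{-1}\right)m_{G}\left(V^{-1}\right)\), and this factor may decay along the walk. I see two ways to remove it.

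The first is to replace \(gV\) by left-translated sets \(Vg\), since \(m_{r}\left(Vg\right)=m_{r}\left(V\right)\). This needs the lower bound \(u\geq\delta/2\) on \(Vz_{n}\), i.e. left uniform control of \(u\). To get it, I would use that \(u=u\ast\theta\) and bound \(u(hg)\) through the density of \(\theta\) against translates, in the spirit of the continuity estimate for \(\mathcal{H}^{\infty}\left(G,\theta\right)\) in the Fundamentals section.

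The second way is to avoid choosing sets altogether and bound the expected occupation. Write
\[\int_{G}u\,dm_{r}\;\geq\;\sum_{n}\mathbb{E}\Big[\int_{G}u\left(z_{n}h\right)\mathbf{1}_{V}\left(h\right)\,\Delta\left(z_{n}^{-1}\right)dm_{r}\left(h\right)\Big],\]
and compare it with the \(\theta\)-harmonicity of \(u\) to show the sum is infinite when \(U\not\equiv0\). I would try the first route first, and fall back on the second if left continuity is not available without further assumptions on \(\theta\).
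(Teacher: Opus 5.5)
Your reduction \(\int_{\bm{B}}\Phi\,d\bm{\nu}=\int_{G}u\,dm_{r}\) with \(u=P_{\theta}\phi\) is correct, and so is your unimodular argument. The gap is the non-unimodular case. You identify the obstacle, but neither route removes it, and the obstacle is real rather than technical.

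Take \(G=\mathrm{Aff}(\mathbb{R})\) with the walks of Section~\ref{app:Harn}, so \(z_{n}=(Q_{n},R_{n})\). With \(dm_{r}=a^{-1}da\,db\), a direct change of variables gives \(m_{r}(z_{n}V)=Q_{n}\,m_{r}(V)\). This decays exponentially because \(\mathbb{E}[\log A]<0\), so \(\sum_{j}m_{r}(z_{n_{j}}V)<+\infty\) almost surely, whatever times you extract.

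The same factor bounds the right-hand side of your route~2 by \(\Vert u\Vert_{\infty}m_{r}(V)\sum_{n}\mathbb{E}[A]^{n}\). This is finite whenever \(\mathbb{E}[A]<1\) (e.g.\ for the measure of Theorem~\ref{thm:cntexm} with \(M\) large), so that sum cannot be shown to diverge.

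Route~1 uses the right sets but asks for the wrong kind of bound. A pointwise bound \(u\geq\delta/2\) on \(Vz_{n}\) needs left uniform continuity. The estimate of Section~\ref{sct:fund} only controls \(\Vert hg\varphi-g\varphi\Vert_{L^{1}}=\Vert (g^{-1}hg)\varphi-\varphi\Vert_{L^{1}}\), which is not uniform in \(g\). The bound genuinely fails even along the walk. On \(\mathrm{Aff}(\mathbb{R})\) take \(u=P_{\theta}\mathbf{1}_{A}\) with \(A\) closed, nowhere dense, and of positive Lebesgue measure. Then \(Vz_{n}\) contains \((Q_{n},b)\) for every \(b\) within a fixed distance of \(R_{n}\to R_{\infty}\). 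Also \(A^{\complement}\) contains an interval of some length \(\ell\) centred at such a \(b_{0}\), and \(u(a,b_{0})\leq\mu(\{x:|x|\geq\ell/(2a)\})\to0\) as \(a\to0\). Hence \(\inf_{Vz_{n}}u\to0\) almost surely.

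The missing idea is to bound the integral of \(u\) over \(Vg\) rather than \(u\) pointwise. Put \(w(g):=\int_{Vg}u\,dm_{r}=\int_{V}u(hg)\,dm_{r}(h)\), where the second form uses right invariance of \(m_{r}\). Fubini gives \(w\ast\theta=w\) and \(0\leq w\leq\Vert u\Vert_{\infty}m_{r}(V)\). Also \(w(e_{G})>0\), since \(u\) is continuous with \(u(e_{G})=\int\phi\,d\bm{\nu}>0\).

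Now run your martingale and escape argument for \(w\). On an event of positive probability, \(w(z_{n})\geq\delta\) eventually. Choosing \(z_{n_{j+1}}\notin\bigcup_{i\leq j}\overline{V}^{-1}\overline{V}z_{n_{i}}\) makes the sets \(Vz_{n_{j}}\) pairwise disjoint. Hence \(\int_{G}u\,dm_{r}\geq\sum_{j}w(z_{n_{j}})=+\infty\), and the modular function never enters.

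Note that \(w=P_{\theta}\psi\) with \(\psi(b)=\int_{V^{-1}}\phi(k^{-1}.b)\,dm_{G}(k)\), which is exactly the paper's mollification \(\phi_{\rho}\) with \(\rho=\mathbf{1}_{V^{-1}}\). The paper instead works pointwise in \(b\). Uniform continuity of \(g\mapsto g\phi_{\rho}(b)\) bounds the integrand of \(\Phi_{\rho}(b)\) from below on whole left translates \(sV\), which satisfy \(m_{G}(sV)=m_{G}(V)\). The well-separated points \(s\) come from the isometry property and a maximum principle for \(P_{\theta}\mathbf{1}_{B_{\epsilon}}\), not from martingales.

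Once repaired, your route proves slightly more: \(\int_{\bm{B}}\Phi\,d\bm{\nu}=+\infty\) whenever \(\phi\neq0\), without using that \(\Phi\) is constant. It also applies to the Poisson transform of any stationary space. So in the second proof of Theorem~\ref{mthm:conservative} it would bypass the Poisson boundary and its ergodicity.
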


\begin{proof}[Proof of Lemma~\ref{lem:infs}]
Set the constant \(\alpha\equiv\Phi\). If \(\alpha=0\), then since \(\phi\geq0\) and \(\Phi\left(b\right)=0\) for \(\bm{\nu}\)-a.e. \(b\in\bm{B}\), we have \(\int_{G}g\phi\left(b\right)dm_{G}\left(g\right)=0\) for \(\bm{\nu}\)-a.e. \(b\). By Fubini's theorem this implies that \(g\phi=0\) \(\bm{\nu}\)-a.e. for \(m_{G}\)-a.e. \(g\in G\), namely \(g\bm{\nu}\left(\phi>0\right)=0\) for \(m_{G}\)-a.e. \(g\). By nonsingularity it follows that \(\bm{\nu}\left(\phi>0\right)=0\), so \(\phi=0\) \(\bm{\nu}\)-a.e. Assume then \(\alpha>0\) and we will show that \(\alpha=+\infty\).

Fix an arbitrary nonnegative function \(\rho\in C_{\mathrm{c}}\left(G\right)\) with \(\left\Vert\rho\right\Vert_{L^{1}\left(G\right)}=1\), and let the functions
\[\phi_{\rho}\left(b\right)=\int_{G}h\phi\left(b\right)\cdot\rho\left(h\right)dm_{G}\left(h\right)\quad\text{and}\quad\Phi_{\rho}\left(b\right)=\int_{G}g\phi_{\rho}\left(b\right)dm_{G}\left(g\right).\]
Then also \(\Phi_{\rho}\equiv\alpha\), since by Fubini's theorem and left invariance of \(m_{G}\),
\begin{align*}
\Phi_{\rho}\left(b\right)
&=\int_{G}\int_{G}\phi\big(\left(gh\right)^{-1}.b\big)\rho\left(h\right)dm_{G}\left(h\right)dm_{G}\left(g\right)\\
&=\int_{G}\int_{G}\phi\left(g^{-1}.b\right)\rho\left(h\right)dm_{G}\left(g\right)dm_{G}\left(h\right)=\int_{G}\Phi\left(b\right)\rho\left(h\right)dm_{G}\left(h\right)=\alpha.
\end{align*}
For every \(g\in G\) and \(b\in\bm{B}\) we may bound
\begin{align*}
\big|g\phi_{\rho}\left(b\right)-\phi_{\rho}\left(b\right)\big|
&=\Big|\int_{G}\phi\left(h^{-1}g^{-1}.b\right)\rho\left(h\right)dm_{G}\left(h\right)-\int_{G}\phi\left(h^{-1}.b\right)\rho\left(h\right)dm_{G}\left(h\right)\Big|\\
&=\Big|\int_{G}\phi\left(h^{-1}.b\right)\rho\left(gh\right)dm_{G}\left(h\right)-\int_{G}\phi\left(h^{-1}.b\right)\rho\left(h\right)dm_{G}\left(h\right)\Big|\\
&\leq\left\Vert\phi\right\Vert_{L^{\infty}\left(\bm{\nu}\right)}\cdot\int_{G}\left|\rho\left(gh\right)-\rho\left(h\right)\right|dm_{G}\left(h\right).
\end{align*}
The last term is independent of \(b\), and is continuous in its \(g\)-variable. Therefore, for every \(\epsilon>0\) there exists a compact identity neighborhood \(V_{\epsilon}\subset G\) such that \(\left|g\phi_{\rho}\left(b\right)-\phi_{\rho}\left(b\right)\right|<\epsilon\) for all \(g\in V_{\epsilon}\) and \(b\in\bm{B}\).

As \(\phi_{\rho}\) is nonnegative and nonzero (we assumed \(\alpha>0\)), find \(\epsilon>0\) so that \(B_{\epsilon}:=\left\{\phi_{\rho}>\epsilon\right\}\subset\bm{B}\) satisfies \(\bm{\nu}\left(B_{\epsilon}\right)>0\). For every \(g\in V_{\epsilon/2}\), if \(b\in B_{\epsilon}\) then
\[g\phi_{\rho}\left(b\right)\geq\phi_{\rho}\left(b\right)-\epsilon/2>\epsilon/2,\]
hence \(B_{\epsilon}\subset g.B_{\epsilon/2}\). Since by the defining property of the Poisson boundary \(P_{\theta}\) is an isometry,
\[\left\Vert P_{\theta}\mathbf{1}_{B_{\epsilon}}\right\Vert_{L^{\infty}\left(G\right)}=\left\Vert\mathbf{1}_{B_{\epsilon}}\right\Vert_{L^{\infty}\left(\bm{\nu}\right)}=1.\]
Put \(u:=P_{\theta}\mathbf{1}_{B_{\epsilon}}\in\mathcal{H}^{\infty}\left(G,\theta\right)\). We claim that for every \(k\in\mathbb{N}\) the set
\[A_{k}:=\left\{g\in G:u\left(g\right)>1-1/k\right\}\]
is not relatively compact. Indeed, if \(A_{k}\) were relatively compact for some \(k\), then since \(\left\Vert u\right\Vert_{L^{\infty}\left(G\right)}=1\), there is a sequence \(\left(g_{n}\right)\subset A_{k}\) with \(u\left(g_{n}\right)\to1\). Passing to a convergent subsequence \(g_{n}\to g_{o}\in\overline{A_{k}}\), by continuity \(u\left(g_{o}\right)=1\). Since \(0\leq u\leq1\) and \(u\) is \(\theta\)-harmonic, for every \(n\geq 1\) we get
\[1=u\left(g_{o}\right)=\int_{G}u\left(g_{o}h\right)d\theta^{\ast n}\left(h\right),\]
hence \(u\left(g_{o}h\right)=1\) for \(\theta^{\ast n}\)-a.e. \(h\in G\). By continuity of \(u\) and admissibility of \(\theta\), it follows that \(u\equiv 1\), but this contradicts the assumption that \(A_{k}\) is not relatively compact.

Fix an arbitrary \(k\in\mathbb{N}\). Since \(A_{k}\) is not relatively compact, there is \(S\subset A_{k}\) with cardinality \(\#S=k\) such that \(sV_{\epsilon/2}\), \(s\in S\), are pairwise disjoint. Since \(u\left(s\right)=\bm{\nu}\left(s^{-1}.B_{\epsilon}\right)>1-1/k\) for every \(s\in S\), we have
\[\bm{\nu}\big(\bigcap\nolimits_{s\in S}s^{-1}.B_{\epsilon}\big)>0.\]
Let now \(b\in\bigcap_{s\in S}s^{-1}.B_{\epsilon}\). For every \(s\in S\) and every \(g\in sV_{\epsilon/2}\), writing \(g=sv\) with \(v\in V_{\epsilon/2}\), we have \(s^{-1}.b\in B_{\epsilon}\subset v.B_{\epsilon/2}\), hence \(g^{-1}.b=v^{-1}s^{-1}.b\in B_{\epsilon/2}\). Therefore \(g\phi_{\rho}\left(b\right)=\phi_{\rho}\left(g^{-1}.b\right)>\epsilon/2\), and thus
\[\Phi_{\rho}\left(b\right)=\int_{G}g\phi_{\rho}\left(b\right)dm_{G}\left(g\right)\geq\sum\nolimits_{s\in S}\int_{sV_{\epsilon/2}}g\phi_{\rho}\left(b\right)dm_{G}\left(g\right)\geq\#S\cdot\tfrac{\epsilon}{2}\cdot m_{G}\left(V_{\epsilon/2}\right)=k\cdot\tfrac{\epsilon}{2}\cdot m_{G}\left(V_{\epsilon/2}\right).\]
Since \(k\in\mathbb{N}\) is arbitrary while \(\epsilon\) is fixed, it follows that \(\Phi_{\rho}\) is unbounded, hence \(\alpha\equiv\Phi_{\rho}\equiv+\infty\).
\end{proof}

\begin{proof}[Proof of Theorem \ref{mthm:conservative}]
Let \(\left(X,\mu\right)\) be a stationary \(\left(G,\theta\right)\)-space, and assume \(W\) is a Borel set such that
\(m_{G}\left(R_{W}\left(x\right)\right)<+\infty\) for every \(x\in X\). For \(r>0\) set
\[X_{r}:=\left\{x\in X:m_{G}\left(R_{W}\left(x\right)\right)\leq r\right\}\text{ and }W_{r}:=W\cap X_{r}.\]
Note that \(X_{r}\) is \(G\)-invariant (by the left invariance of \(m_{G}\) with the general property \(R_{W}\left(g.x\right)=gR_{W}\left(x\right)\)). Since \(W\) is transient, \(X_{r}\nearrow X\) modulo \(\mu\) as \(r\nearrow+\infty\), hence \(W_{r}\nearrow W\) modulo \(\mu\) as \(r\nearrow+\infty\). Then to show that \(\mu\left(W\right)=0\) we will show that \(\mu\left(W_{r}\right)=0\) for every \(r>0\). Fix \(r>0\), and we first claim that \(m_{G}\left(R_{W_{r}}\left(x\right)\right)\leq r\) for every \(x\in X\); indeed, if \(x\in X_{r}\) then trivially \(R_{W_{r}}\left(x\right)\subseteq R_{W}\left(x\right)\), hence \(m_{G}\left(R_{W_{r}}\left(x\right)\right)\leq m_{G}\left(R_{W}\left(x\right)\right)\leq r\), and if \(x\notin X_{r}\) then \(R_{W_{r}}\left(x\right)=\emptyset\) since \(X_{r}\) is \(G\)-invariant, hence  \(m_{G}\left(R_{W_{r}}\left(x\right)\right)=0<r\). Consider the function
\[u:=P_{\theta}^{\mu}1_{W_{r}}\in\mathcal{H}^{\infty}\left(G,\theta\right).\]
Using the defining property of the Poisson boundary, let \(\phi\in L^{\infty}\left(\bm{\nu}\right)\) be such that \(u=P_{\theta}\phi\), and define
\[\Phi\left(b\right):=\int_{G}g\phi\left(b\right)dm_{G}\left(g\right).\]
Since \(P_{\theta}\) is bi-positivity preserving, \(\phi\) and \(\Phi\) are nonnegative, and \(\Phi\) is \(G\)-invariant by the left invariance of \(m_{G}\). By Fubini's theorem we have
\[\int_{\bm{B}}\Phi\left(b\right)d\bm{\nu}\left(b\right)=\int_{G}P_{\theta}\phi\left(g^{-1}\right)dm_{G}\left(g\right)=\iint\nolimits_{G\times X}g1_{W_{r}}\left(x\right)dm_{G}\otimes\mu\left(g,x\right)=\int_{X}m_{G}\left(R_{W_{r}}\left(x\right)\right)d\mu\left(x\right)\leq r,\]
hence \(\Phi\in L^{1}\left(\bm{\nu}\right)\). Since the Poisson boundary \(\left(\bm{B},\bm{\nu}\right)\) is ergodic, \(\Phi\) is a finite constant \(\bm{\nu}\)-a.e. From Lemma~\ref{lem:infs} it follows that \(\Phi\equiv0\), hence \(\phi=0\) \(\bm{\nu}\)-a.e. Therefore \(u=P_{\theta}\phi\equiv0\) (everywhere, as \(u\) is continuous), and evaluating at the identity gives \(0=u\left(e_{G}\right)=\mu\left(W_{r}\right)\).
\end{proof}

\section{Absolutely continuous invariant measure}\label{sct:acim}

Let \(G\) be an lcsc group and \(\left(X,\mu\right)\) a nonsingular \(G\)-space. A Borel \(\sigma\)-finite measure \(m\) on \(X\) is called {\sc a.c.i.m.} of \(\left(X,\mu\right)\) if it is absolutely continuous with respect to \(\mu\) and \(G\)-invariant. The question whether a nonsingular \(G\)-space admits an {\sc a.c.i.m.} is fundamental in ergodic theory and was addressed by many authors. See e.g. \cite{danilenko2023ergodic} and the many references therein. In the common terminology, an ergodic nonsingular \(G\)-space is called {\bf type \(\mathrm{II}_{1}\)} if it admits a finite {\sc a.c.i.m.}; {\bf type \(\mathrm{II}_{\infty}\)} if it admits an infinite {\sc a.c.i.m.}; and {\bf type \(\mathrm{III}\)} if it admits no {\sc a.c.i.m.}

\smallskip

For stationary \(\left(G,\theta\right)\)-spaces more is known. First, it was shown by Bader--Shalom~\cite[Prop.~2.6]{bader2006factor} that every two ergodic \(\theta\)-stationary probability measures on a Borel \(G\)-space \(X\) are either mutually singular or equal. In particular, an ergodic stationary \(\left(G,\theta\right)\)-space of type \(\mathrm{II}_{1}\) must be itself probability preserving. This still leaves the question whether a stationary \(\left(G,\theta\right)\)-space can be of type \(\mathrm{II}_{\infty}\) unanswered. In the following we give a general answer to this, generalizing Theorem~\ref{mthm:typeIII}:

\begin{thm}\label{thm:typeIII}
Let \(\left(G,\theta\right)\) be a measured group. Then every ergodic stationary \(\left(G,\theta\right)\)-space is either itself probability preserving, or otherwise type \(\mathrm{III}\). More generally, every stationary \(\left(G,\theta\right)\)-space \(\left(X,\mu\right)\) admits a unique \(G\)-invariant decomposition \(X=X_{1}\sqcup X_{\theta}\), such that \(\mu\left(\cdot\cap X_{1}\right)\) is \(G\)-invariant and \(\mu\left(\cdot\cap X_{\theta}\right)\) admits no {\sc a.c.i.m.}
\end{thm}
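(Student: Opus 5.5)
The core step is to show that if a stationary \(\left(G,\theta\right)\)-space \(\left(X,\mu\right)\) admits an {\sc a.c.i.m.} \(m\), then \(\mu\) restricted to the support of \(m\) is already \(G\)-invariant.

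Write \(m=F\,d\mu\) with \(F\geq0\) measurable. By invariance of \(m\) and quasi-invariance of \(\mu\), \(F\) transforms under the Radon--Nikodym cocycle:
\[F\left(g^{-1}.x\right)\cdot\frac{dg^{-1}\mu}{d\mu}\left(x\right)=F\left(x\right)\quad\mu\text{-a.e., for every }g.\]
Thus the set \(Y:=\{F>0\}\) is \(G\)-invariant mod \(\mu\). On \(Y\), the function \(\psi:=1/F\) satisfies \(\tfrac{dg\mu}{d\mu}=\psi/(g\psi)\), up to the orientation convention, which I would fix carefully.

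Plug this into the identity \(\int_{G}\tfrac{dg\mu}{d\mu}\left(x\right)d\theta\left(g\right)=1\) from the Fact following the definition of stationarity. This shows that \(\psi\) (or \(F\), depending on orientation) is \(\theta\)-harmonic along orbits in the dual sense:
\[\int_{G}F\left(g^{-1}.x\right)^{\pm1}\,d\theta\left(g\right)=F\left(x\right)^{\pm1}\quad\text{a.e. on }Y.\]

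Now use a Jensen/strict convexity argument. I would take \(h:=\log F\), or better a bounded concave truncation such as \(\arctan F\) or \(\min(F,c)\). I want a \(\mu\)-integrable function \(f\) with \(\int f\,d\mu=\int\!\int f(g.x)\,d\theta\,d\mu\) by stationarity, where the harmonicity identity plus strict concavity forces \(f(g.x)=f(x)\) for \(\theta\)-a.e. \(g\).

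Concretely, with \(\Psi=\psi\) harmonic in the sense \(\int \psi(g.x)\,\tfrac{\cdots}{\cdots}\) (orientation permitting), consider \(\int \min(\psi,c)\,d\mu\). Stationarity gives
\[\int\!\int\min(\psi,c)(g.x)\,d\theta\,d\mu=\int\min(\psi,c)\,d\mu.\]
Pointwise, the harmonicity and concavity of \(t\mapsto\min(t,c)\) give an inequality in one direction. Equality of the integrals then forces equality a.e., so \(\min(\psi(g.x),c)\) is constant in \(g\) for \(\theta\)-a.e. \(g\) whenever the level exceeds \(c\). Letting \(c\) vary, \(\psi(g.x)=\psi(x)\) for \(\theta\otimes\mu\)-a.e. \((g,x)\).

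Iterating with \(\theta^{\ast n}\), which also satisfies stationarity, and using admissibility together with the continuity of \(g\mapsto\) (Poisson transforms of truncations of \(\psi\)) from Fact~\ref{fct:unifcont}(2), I get that \(\psi\) is \(G\)-invariant mod \(\mu\). Hence \(\tfrac{dg\mu}{d\mu}=1\) on \(Y\), i.e. \(\mu|_{Y}\) is invariant.

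The main obstacle is getting the direction of the concavity inequality right. One needs the identity to express \(\psi(x)\) as a \(\theta\)-average of \(\psi(g.x)\) with respect to the probability weights \(d\theta(g)\), without extra Radon--Nikodym weights. If instead the weights appear, I would first change measure to \(m\) on \(Y\) and use \(\int\tfrac{dg\mu}{d\mu}d\theta=1\) as a probability kernel. The fallback is to run the argument with \(\log\) on the sets \(\{a<F<b\}\).

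Given the core step, I define \(X_{1}\) as the union (essential supremum) of all \(G\)-invariant sets on which \(\mu\) is invariant, and set \(X_{\theta}:=X\setminus X_{1}\). The restriction \(\mu|_{X_{1}}\) is invariant, since a countable union attains the essential supremum. If \(\mu|_{X_{\theta}}\) had an {\sc a.c.i.m.} with nonnull support \(Y\), the step above would make \(\mu|_{Y}\) invariant, contradicting maximality. Uniqueness follows from the same maximality.

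In the ergodic case, one of the two pieces is null. If \(X=X_{1}\), the action is probability preserving. Otherwise \(X=X_{\theta}\), which admits no {\sc a.c.i.m.}, so the action is type \(\mathrm{III}\).
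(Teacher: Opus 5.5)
Your route is genuinely different from the paper's, and it works once the core step is oriented correctly. The paper handles the ergodic case through the forward transformation $\tau(\omega,x)=(\sigma\omega,\omega_0.x)$. There $\mathbb{P}\otimes\mu$ and $\mathbb{P}\otimes m$ are both $\tau$-invariant, and $\tau$ is ergodic by Proposition~\ref{prop:ergodic}, so $d\mu/dm$ is constant. The general case is then deduced from the Greschonig--Schmidt ergodic decomposition together with Lemma~\ref{lem:ergdecompacim}, with $X_1$ the set of points whose ergodic component is invariant.

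You instead show directly, with no ergodicity assumption, that the density $F=dm/d\mu$ of any a.c.i.m.\ is $G$-invariant on $Y=\{F>0\}$, and you then build $X_1$ by exhaustion. This avoids both the forward transformation and the ergodic decomposition of nonsingular actions. It also shows that every a.c.i.m.\ has the form $F\mu$ with $F$ invariant, and it gives uniqueness of the decomposition (mod $\mu$) by maximality, which the paper's proof leaves implicit. For the contradiction step, note that an a.c.i.m.\ of $\mu|_{X_\theta}$ is already an a.c.i.m.\ of $\mu$, so you can apply the core step to $(X,\mu)$ itself. The paper's route is shorter given the cited machinery, and it identifies $X_1$ as the union of the invariant ergodic components.

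Two details in your core step need fixing.

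First, $gm=m$ gives $F(g^{-1}.x)\,\frac{dg\mu}{d\mu}(x)=F(x)$, not the formula with $dg^{-1}\mu/d\mu$. The identity $\int_G\frac{dg\mu}{d\mu}\,d\theta(g)=1$ therefore yields $\int_G\psi(g^{-1}.x)\,d\theta(g)=\psi(x)$ on $Y$, for $\psi=1/F$. This is an average over $g^{-1}.x$, whereas $\theta$-stationarity of $\mu$ controls averages over $g.x$. So the computation with $\int\min(\psi,c)\,d\mu$, as you wrote it, does not close.

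Your fallback of passing to $m$ is the right fix. The operator $h\mapsto\int_G h(g^{-1}.\,\cdot)\,d\theta(g)$ preserves $m$-integrals because $m$ is $G$-invariant. Moreover $\psi\in L^1(m)$, since $\int\psi\,dm=\mu(Y)\le 1$. Hence $\min(\psi,c)\in L^1(m)$, and equality of the finite integrals forces pointwise equality. Separating the cases $\psi(x)\le c$ and $\psi(x)>c$ over rational $c$ then gives $\psi(g^{-1}.x)=\psi(x)$ for $\theta$-a.e.\ $g$. Alternatively, stay with $\mu$ and use the probability kernel $\frac{dg\mu}{d\mu}(x)\,d\theta(g)$ acting by $g^{-1}$: $F$ itself is harmonic for it, and it preserves $\mu$-integrals.

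Second, to pass from $\theta$-a.e.\ $g$ to all $g$, Poisson transforms of truncations of $\psi$ do not suffice, since they only see the law of $\psi$ under $g\mu$. Instead, use that $Y$ is $G$-invariant mod $\mu$ and that $g(\mu|_Y)=\mu|_Y$ for $\theta$-a.e.\ $g$. The stabilizer $\{g: g(\mu|_Y)=\mu|_Y\}$ is a subgroup, and it is closed by Fact~\ref{fct:unifcont}(2) applied to the functions $f\mathbf{1}_Y$. Being closed and $\theta$-conull, it contains $\mathrm{supp}(\theta)$, hence equals $G$ by admissibility.
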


\subsection{Ergodicity of nonsingular actions}

We recall some basics about ergodicity in the general context of nonsingular actions. Let \(\left(X,\mu\right)\) be a nonsingular \(G\)-space. A measurable function \(f:X\to\mathbb{R}\) is considered to be \(G\)-invariant, if \(gf=f\) \(\mu\)-a.e. for every \(g\in G\). Then \(\left(X,\mu\right)\) is {\bf ergodic} if every \(G\)-invariant function is necessarily constant \(\mu\)-a.e. To verify ergodicity, it suffices to check that every \(G\)-invariant \(L^{\infty}\left(\mu\right)\)-function is constant \(\mu\)-a.e. or also that the indicator function of every \(G\)-invariant measurable set is constant \(\mu\)-a.e.\footnote{This is because every \(G\)-invariant function \(f:X\to\mathbb{R}\) produces the \(G\)-invariant sets \(\left\{f\geq c\right\}\) for \(c>0\).}

\smallskip

We will need the ergodic decomposition theorem by Greschonig--Schmidt \cite{greschonig2000ergodic}. For a nonsingular \(G\)-space \(\left(X,\mu\right)\) with a strict version \(\varpi\) of the Radon--Nikodym cocycle, let \(\mathcal{E}^{G}_{\varpi}\left(X\right)\) be the space of ergodic quasi-invariant probability measures on \(X\), such that \(\varpi\) is a version of their Radon--Nikodym cocycle. By \cite[Thm.~5.2]{greschonig2000ergodic} there is a \(G\)-invariant map
\[\upsilon:X\to\mathcal{E}^{G}_{\varpi}\left(X\right),\quad x\mapsto\upsilon_{x},\]
such that
\[\mu\left(\cdot\right)=\int_{X}\upsilon_{x}\left(\cdot\right)d\mu\left(x\right),\]
and, more generally, for every nonnegative Borel function \(f:X\to\mathbb{R}_{\geq 0}\),
\[\upsilon_{x}\left(f\right)=\mathbb{E}_{\mu}\left[f\mid\mathcal{I}\left(G\right)\right]\left(x\right)\quad\text{for \(\mu\)-a.e. }x\in X,\]
where \(\mathcal{I}\left(G\right)\) denotes the \(\sigma\)-algebra of \(G\)-invariant Borel subsets of \(X\). The following lemma shows that the ergodic decomposition for nonsingular actions commutes with passing to an {\sc a.c.i.m.}

\begin{lem}\label{lem:ergdecompacim}
Let \(\left(X,\mu\right)\) be a nonsingular \(G\)-space with ergodic decomposition \(\upsilon:X\to\mathcal{E}_{\varpi}^{G}\left(X\right)\). Then every {\sc a.c.i.m.} \(m\) of \(\left(X,\mu\right)\) admits an ergodic decomposition
\[m\left(\cdot\right)=\int_{X}m_{x}\left(\cdot\right)d\mu\left(x\right),\]
such that \(m_{x}\) is an ergodic {\sc a.c.i.m.} for \(\left(X,\upsilon_{x}\right)\) for \(m\)-a.e. \(x\in X\).
\end{lem}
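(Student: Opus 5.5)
Since \(m\ll\mu\) and \(m\) is \(\sigma\)-finite, I would write \(m=f\mu\) with \(f:=dm/d\mu\), a measurable function \(X\to[0,+\infty)\). The first step is to convert \(G\)-invariance of \(m\) into a pointwise relation for \(f\). For every \(g\in G\), \(gm=m\) gives
\[
f(g^{-1}.x)\cdot\frac{dg\mu}{d\mu}(x)=f(x)\quad\mu\text{-a.e.}
\]
Hence, after modifying on a null set, \(f(g.x)\,\varpi_{g}(x)=f(x)\) holds \(\mu\)-a.e. for each \(g\), where \(\varpi\) is the strict version used to define \(\mathcal{E}^{G}_{\varpi}(X)\). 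My natural candidate is \(m_{x}:=f\,\upsilon_{x}\), which does not depend on the choice of \(f\) for \(\mu\)-a.e. \(x\) because \(\upsilon_{x}\ll\mu\) on average. The desired disintegration then follows from the ergodic decomposition theorem applied to the nonnegative Borel functions \(f\mathbf{1}_{A}\):
\[
\int_{X}m_{x}(A)\,d\mu(x)=\int_{X}\upsilon_{x}(f\mathbf{1}_{A})\,d\mu(x)=\int_{X}\mathbb{E}_{\mu}[f\mathbf{1}_{A}\mid\mathcal{I}(G)]\,d\mu=\int_{A}f\,d\mu=m(A).
\]

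The second step is to show that \(m_{x}\) is an a.c.i.m. for \(\upsilon_{x}\) for a.e. \(x\). Absolute continuity \(m_{x}\ll\upsilon_{x}\) holds by construction. For invariance, I use that \(\varpi\) is a version of the Radon--Nikodym cocycle of \(\upsilon_{x}\). This gives, for a fixed \(g\) and a Borel set \(A\),
\[
m_{x}(g^{-1}.A)=\int\mathbf{1}_{A}(g.y)f(y)\,d\upsilon_{x}(y)=\int\mathbf{1}_{A}(y)\,f(g^{-1}.y)\,\varpi_{g^{-1}}(y)\,d\upsilon_{x}(y).
\]
So it suffices that \(f(g^{-1}.y)\varpi_{g^{-1}}(y)=f(y)\) holds \(\upsilon_{x}\)-a.e. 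We know this identity off a \(\mu\)-null set \(N_{g}\), and \(\int\upsilon_{x}(N_{g})\,d\mu=\mu(N_{g})=0\). Hence \(\upsilon_{x}(N_{g})=0\) for a.e. \(x\), with the exceptional set depending on \(g\).

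The main obstacle is the uncountability of \(G\). To handle it, I would take a countable dense subset \(D\subset G\) and get invariance \(gm_{x}=m_{x}\) for all \(g\in D\) off a single null set of \(x\). I would then extend to all of \(G\) by applying Fubini to the measurable set \(\{(g,y):f(g.y)\varpi_{g}(y)\neq f(y)\}\), which is \(m_{G}\otimes\mu\)-null. This yields invariance under \(m_{G}\)-a.e. \(g\) for a.e. \(x\). The set of \(g\) with \(gm_{x}=m_{x}\) is a conull subgroup, and a conull measurable subgroup of an lcsc group is all of \(G\), since \(S\cdot S=G\) for conull \(S\).

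For \(\sigma\)-finiteness of \(m_{x}\), I would use \(f<\infty\) everywhere: the sets \(\{f\le n\}\) exhaust \(X\) and have finite \(m_{x}\)-measure. Ergodicity of \(m_{x}\) is inherited, because \(m_{x}\ll\upsilon_{x}\) and \(m_{x}\) is equivalent to \(\upsilon_{x}\) restricted to \(\{f>0\}\). By invariance of \(m_{x}\) and quasi-invariance of \(\upsilon_{x}\), the set \(\{f>0\}\) is \(\upsilon_{x}\)-a.e. \(G\)-invariant, so ergodicity of \(\upsilon_{x}\) transfers. Finally, statements holding for \(\mu\)-a.e. \(x\) hold in particular for \(m\)-a.e. \(x\), which gives the lemma.
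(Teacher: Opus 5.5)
Your argument is correct, and its skeleton matches the paper's. You use the same candidate \(m_{x}=(dm/d\mu)\,\upsilon_{x}\), which is the paper's \(m_{\upsilon_{x}}=\kappa\,\upsilon_{x}\). You use the same disintegration via \(\upsilon_{x}(\cdot)=\mathbb{E}_{\mu}[\cdot\mid\mathcal{I}(G)](x)\), and you finish the same way, with Fubini and the fact that an \(m_{G}\)-conull subgroup is all of \(G\).

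The genuine difference is in the fixed-\(g\) invariance step. The paper never uses that \(\varpi\) is a Radon--Nikodym cocycle of each \(\upsilon_{x}\). It pairs \(gm_{\nu}\) and \(m_{\nu}\) against \(\Phi(\nu)\), for \(\Phi\in L^{\infty}(\widehat{\mu})\), and against a countable generating family \(\mathcal{F}\subset L^{1}(m)\). This needs only the \(G\)-invariance of \(m\) and of \(\Phi\circ\upsilon\). It then applies Fubini to \(\{(g,\nu):gm_{\nu}=m_{\nu}\}\), whose measurability needs a separate check.

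You instead use the defining property of \(\mathcal{E}^{G}_{\varpi}(X)\) together with the pointwise relation \(f(g.y)\varpi_{g}(y)=f(y)\). You transfer this relation from \(\mu\) to \(\upsilon_{x}\) via \(\int_{X}\upsilon_{x}(N)\,d\mu(x)=\mu(N)\). Running Fubini on the manifestly Borel set \(\{(g,y):f(g.y)\varpi_{g}(y)\neq f(y)\}\) makes measurability transparent, and it also makes your countable dense set \(D\) superfluous. You also skip the reduction to \(m\sim\mu\) and verify \(\sigma\)-finiteness and ergodicity of \(m_{x}\) explicitly; the paper leaves both implicit, since ergodicity there is immediate from \(\kappa>0\).

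One detail remains to add. Your argument only gives \(\upsilon_{x}(\{f>0\})\in\{0,1\}\), and when this is \(0\) you get \(m_{x}=0\), which is not an ergodic a.c.i.m. in any useful sense. This happens only on the \(G\)-invariant set \(Z=\{x:m_{x}(X)=0\}\). Since \(Z\in\mathcal{I}(G)\),
\[
m(Z)=\int_{Z}\upsilon_{x}(f)\,d\mu(x)=\int_{Z}m_{x}(X)\,d\mu(x)=0.
\]
This is exactly why the conclusion is stated for \(m\)-a.e.\ rather than \(\mu\)-a.e.\ \(x\). Your closing sentence, which derives everything from \(\mu\)-a.e.\ statements, should be adjusted accordingly.
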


\begin{proof}[Proof of Lemma~\ref{lem:ergdecompacim}]
It is a basic fact that \(m\) is supported on a \(G\)-invariant set on which it is mutually absolutely continuous with \(\mu\) (see e.g. \cite[Lem.~17.7]{avraham2024hopf}). Replacing \(X\) by this \(G\)-invariant set, we may assume that \(m\sim\mu\). Fix a positive Borel function \(\kappa:X\to\mathbb{R}_{>0}\) with \(\kappa:=dm/d\mu\) \(\mu\)- hence \(m\)-a.e. For \(\nu\in\mathcal{E}_{\varpi}^{G}\left(X\right)\), let \(m_{\nu}\) be the \(\sigma\)-finite measure on \(X\) given by
\[dm_{\nu}=\kappa d\nu.\]
Define the probability measure
\[\widehat{\mu}:=\upsilon_{\ast}\mu\text{ on }\mathcal{E}_{\varpi}^{G}\left(X\right).\]
Then for every nonnegative measurable function
\(\Psi:\mathcal{E}_{\varpi}^{G}\left(X\right)\times X\to\mathbb{R}_{\geq 0}\),
\begin{equation}\label{eq:hatiden}
\int_{X}\Psi\left(\upsilon_{x},x\right)dm\left(x\right)=\int_{\mathcal{E}_{\varpi}^{G}\left(X\right)}\int_{X}\Psi\left(\nu,x\right)dm_{\nu}\left(x\right)d\widehat{\mu}\left(\nu\right).
\end{equation}
Indeed, since \(dm_{\nu}=\kappa d\nu\) and \(dm=\kappa d\mu\), this is just the ergodic decomposition of \(\mu\) applied to the nonnegative function \(\mathcal{E}_{\varpi}^{G}\left(X\right)\times X\to\mathbb{R}_{\geq 0}\), \(\left(\nu,x\right)\mapsto\Psi\left(\nu,x\right)\kappa\left(x\right)\). Define now
\[m_{x}:=m_{\upsilon_{x}}\quad\text{for }x\in X.\]
Then by \eqref{eq:hatiden}, for every Borel set \(A\subseteq X\),
\[m\left(A\right)=\int_{\mathcal{E}_{\varpi}^{G}\left(X\right)}m_{\nu}\left(A\right)d\widehat{\mu}\left(\nu\right)=\int_{X}m_{x}\left(A\right)d\mu\left(x\right).\]
It is clear that \(m_{x}\ll\upsilon_{x}\) for every \(x\), and it remains to show that \(m_{x}\) is \(G\)-invariant for \(m\)-a.e. \(x\in X\). Let us first show that for every fixed \(g\in G\), it holds that \(gm_{x}=m_{x}\) for \(m\)-a.e. \(x\in X\). Fix \(g\in G\). Given an arbitrary \(\Phi\in L^{\infty}\left(\widehat{\mu}\right)\), set \(\phi:=\Phi\circ\upsilon\in L^{\infty}\left(\mu\right)\), and since \(\upsilon\) is \(G\)-invariant, \(\phi\) is \(G\)-invariant. Let \(\mathcal{F}\subset L^{1}\left(m\right)\) be a countable collection of functions that together generate the \(\sigma\)-algebra of \(X\). By the \(G\)-invariance of \(m\) and \(\phi\), together with \eqref{eq:hatiden} applied to the integrable functions \(\left(\nu,x\right)\mapsto \Phi\left(\nu\right)f\left(g.x\right)\) and \(\left(\nu,x\right)\mapsto \Phi\left(\nu\right)f\left(x\right)\), for every \(f\in\mathcal{F}\) we get
\begin{align*}
\int_{\mathcal{E}_{\varpi}^{G}\left(X\right)}\Phi\left(\nu\right)gm_{\nu}\left(f\right)d\widehat{\mu}\left(\nu\right)
&=m\left(\phi\cdot gf\right)=m\left(g\phi\cdot gf\right)=m\left(\phi\cdot f\right)=\int_{\mathcal{E}_{\varpi}^{G}\left(X\right)}\Phi\left(\nu\right)m_{\nu}\left(f\right)d\widehat{\mu}\left(\nu\right).
\end{align*}
Since \(\Phi\) and \(f\) are arbitrary, it follows that \(gm_{\nu}=m_{\nu}\) for \(\widehat{\mu}\)-a.e. \(\nu\in\mathcal{E}_{\varpi}^{G}\left(X\right)\). Setting \(g\) free, define
\[E:=\{\left(g,\nu\right)\in G\times\mathcal{E}_{\varpi}^{G}\left(X\right):gm_{\nu}=m_{\nu}\},\]
and put \(E_{g}:=\{\nu\in\mathcal{E}_{\varpi}^{G}\left(X\right):\left(g,\nu\right)\in E\}\) for \(g\in G\) and \(E^{\nu}:=\{g\in G:\left(g,\nu\right)\in E\}\) for \(\nu\in\mathcal{E}_{\varpi}^{G}\left(X\right)\). For every \(g\in G\) we showed that \(\widehat{\mu}\left(E_{g}\right)=1\), so by Fubini's theorem there exists a \(\widehat{\mu}\)-conull set \(\mathcal{E}\subseteq\mathcal{E}_{\varpi}^{G}\left(X\right)\) such that \(m_{G}\left(G\setminus E^{\nu}\right)=0\) for every \(\nu\in\mathcal{E}\). However, note that \(E^{\nu}\) is a subgroup of \(G\) for every \(\nu\in\mathcal{E}_{\varpi}^{G}\left(X\right)\), and the only \(m_{G}\)-conull subgroup of \(G\) is \(G\) itself, hence \(E^{\nu}=G\) for all \(\nu\in\mathcal{E}\). Thus \(m_{\nu}\) is \(G\)-invariant for \(\widehat{\mu}\)-a.e. \(\nu\in\mathcal{E}_{\varpi}^{G}\left(X\right)\), or equivalently \(m_{x}\) is \(G\)-invariant for \(\mu\)- hence \(m\)-a.e. \(x\in X\).
\end{proof}

\subsection{Ergodicity of stationary actions}

We now specialize to ergodicity in stationary actions. Let \(\left(G,\theta\right)\) be a measured group and \(\left(X,\mu\right)\) a stationary \(\left(G,\theta\right)\)-space. It is a basic fact that ergodicity of \(\left(X,\mu\right)\) is equivalent to that \(\mu\) is an extreme point in the convex set of \(\theta\)-stationary probability measures on \(X\); this follows from \cite[Prop.~2.8]{benoist2016random} together with the following Fact~\ref{fct:invariance} (see also \cite[Cor.~2.7]{bader2006factor}).

\begin{fct}\label{fct:invariance}
For a stationary \(\left(G,\theta\right)\)-space \(\left(X,\mu\right)\), consider the Markov operator
\[T_{\theta}:L^{\infty}\left(\mu\right)\to L^{\infty}\left(\mu\right),\quad T_{\theta}f\left(x\right)=\int_{G}f\left(g.x\right)d\theta\left(g\right).\]
Then for every function \(f\in L^{\infty}\left(\mu\right)\) one has \(T_{\theta}f=f\) if and only if \(f\) is \(G\)-invariant.
\end{fct}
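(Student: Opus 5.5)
The direction ``\(G\)-invariant implies \(T_{\theta}f=f\)'' is immediate. If \(gf=f\) \(\mu\)-a.e. for every \(g\in G\), then by Fubini's theorem \(f(g.x)=f(x)\) holds for \(\theta\otimes\mu\)-a.e. \((g,x)\), since \(\theta\) is absolutely continuous with respect to Haar measure and the set of such pairs is measurable. Integrating in \(g\) then gives \(T_{\theta}f=f\) \(\mu\)-a.e.

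For the converse, suppose \(T_{\theta}f=f\). I will use a standard \(L^{2}\) energy identity that exploits stationarity. The key step is to compute
\[\int_{X}\int_{G}\left|f(g.x)-f(x)\right|^{2}d\theta(g)\,d\mu(x)=\int_{X}T_{\theta}(f^{2})\,d\mu-2\int_{X}f\cdot T_{\theta}f\,d\mu+\int_{X}f^{2}\,d\mu .\]
By stationarity, \(\int T_{\theta}(f^{2})\,d\mu=\int f^{2}\,d(\theta\ast\mu)=\int f^{2}\,d\mu\). By the hypothesis \(T_{\theta}f=f\), the middle term equals \(-2\int f^{2}\,d\mu\). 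The total is therefore zero. Hence \(f(g.x)=f(x)\) for \(\theta\otimes\mu\)-a.e. \((g,x)\), which says that \(gf=f\) \(\mu\)-a.e. for \(\theta\)-a.e. \(g\).

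It remains to upgrade invariance under \(\theta\)-a.e. \(g\) to invariance under all of \(G\). First, applying the same argument with \(\theta^{\ast n}\) in place of \(\theta\) is legitimate: \(\mu\) is \(\theta^{\ast n}\)-stationary and \(T_{\theta^{\ast n}}f=T_{\theta}^{n}f=f\). This gives \(gf=f\) for \(\theta^{\ast n}\)-a.e. \(g\), for every \(n\geq 1\).

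Next, the stabilizer \(S:=\{g\in G:gf=f\ \mu\text{-a.e.}\}\) is a subgroup. I claim it is also closed. To see this, I will use Fact~\ref{fct:unifcont}. Writing \(f\) as a difference of bounded functions, the maps \(g\mapsto\int\left|gf-f\right|\,d\mu\) can be controlled via the strongly continuous unitary representation \(\pi\) on \(L^{2}(\mu)\). Indeed, \(\left\Vert gf-f\right\Vert_{L^{1}(\mu)}\) is continuous in \(g\): bound \(\left\Vert g f-g' f\right\Vert_{L^1(\mu)}\) by \(\left\Vert f\right\Vert_\infty\) times an \(L^{1}\) norm of a difference of Radon--Nikodym derivatives, together with the continuity of \(g\mapsto gf\) in measure. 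The simplest way to organize this is to view \(gf\) as \(\pi(g)(f\cdot \mathbf 1)\) divided by \(\sqrt{\varrho}\). Alternatively, approximate \(f\) by functions for which continuity is clear.

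With \(S\) closed, the conclusion follows. Since \(S\) has full \(\theta^{\ast n}\)-measure for each \(n\), it contains \(\mathrm{supp}(\theta^{\ast n})\) for each \(n\). Admissibility then gives \(S\supseteq\overline{\bigcup_{n}\mathrm{supp}(\theta^{\ast n})}=G\).

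The main obstacle is this closedness, i.e.\ the continuity of \(g\mapsto gf\) in \(L^{1}(\mu)\). I would prove it via Fact~\ref{fct:unifcont}(1) applied to \(f\in L^\infty\subset L^2\). The identity \(\pi(g)f=\sqrt{\varrho_{g^{-1}}}\,gf\) shows that the continuity of \(g\mapsto \pi(g)f\) and of \(g\mapsto\pi(g)\mathbf 1\) in \(L^{2}\) yields the continuity of \(g\mapsto gf\) in \(\mu\)-measure. Since \(f\) is bounded, this in turn gives continuity in \(L^{1}(\mu)\).
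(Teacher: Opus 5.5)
Your proof is correct and follows essentially the same route as the paper. Both use the $L^{2}$ energy identity against $\theta^{\ast n}$ (via $\theta^{\ast n}$-stationarity and $T_{\theta^{\ast n}}f=f$), continuity of $g\mapsto gf$ coming from the representation $\pi$ of Fact~\ref{fct:unifcont}(1), and admissibility to pass from $\bigcup_{n}\mathrm{supp}(\theta^{\ast n})$ to all of $G$. The differences are only in packaging. You phrase the last step as closedness of the stabilizer subgroup, and obtain continuity through convergence in measure of $\pi(g)f/\pi(g)\mathbf{1}$. The paper instead shows directly that $\Phi_{f}(g)=\Vert gf-f\Vert_{L^{2}(\mu)}$ is continuous, using the bound $\Vert gf-\pi(g)f\Vert_{L^{2}(\mu)}\leq\Vert f\Vert_{L^{\infty}(\mu)}\Vert\mathbf{1}-\pi(g)\mathbf{1}\Vert_{L^{2}(\mu)}$.
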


\begin{proof}
It is clear that if \(f\in L^{\infty}\left(\mu\right)\) is \(G\)-invariant then \(T_{\theta}f=f\). For the converse, let \(f\in L^{\infty}\left(\mu\right)\) be such that \(T_{\theta}f=f\). Define \(\Phi_{f}\in L^{\infty}\left(\theta\right)\) by
\[\Phi_{f}\left(g\right)=\left\Vert gf-f\right\Vert_{L^{2}\left(\mu\right)},\]
and we claim that \(\Phi_{f}\) is continuous. Let \(\pi\) be the unitary representation of Fact~\ref{fct:unifcont}(1), and write
\[\Phi_{f}\left(g\right)=\left\Vert\sqrt{\varpi_{g}}^{-1}\cdot\pi\left(g\right)f-f\right\Vert_{L^{2}\left(\mu\right)}\leq\left\Vert\sqrt{\varpi_{g}}^{-1}\cdot\pi\left(g\right)f-\pi\left(g\right)f\right\Vert_{L^{2}\left(\mu\right)}+\left\Vert\pi\left(g\right)f-f\right\Vert_{L^{2}\left(\mu\right)}.\]
The second term is continuous in \(g\) by Fact~\ref{fct:unifcont}(1), and the first term is bounded by
\[\left\Vert\sqrt{\varpi_{g}}^{-1}\cdot\sqrt{\varpi_{g}}-\sqrt{\varpi_{g}}\right\Vert_{L^{2}\left(\mu\right)}\cdot\left\Vert f\right\Vert_{L^{\infty}\left(\mu\right)}=\left\Vert\mathbf{1}-\pi_{g}\mathbf{1}\right\Vert_{L^{2}\left(\mu\right)}\cdot\left\Vert f\right\Vert_{L^{\infty}\left(\mu\right)},\]
which is continuous by Fact~\ref{fct:unifcont}(1). This shows that \(\Phi_{f}\) is continuous at \(e_{G}\). Now for all \(g,h\in G\) one has
\[\left|\Phi_{f}\left(hg\right)-\Phi_{f}\left(g\right)\right|=\big|\left\Vert hgf-f\right\Vert_{2}-\left\Vert gf-f\right\Vert_{L^{2}\left(\mu\right)}\big|\leq\left\Vert hgf-gf\right\Vert_{L^{2}\left(\mu\right)}=\Phi_{gf}\left(h\right),\]
and since \(\Phi_{gf}\) is continuous at \(e_{G}\) by the above argument, we get that \(\Phi_{f}\left(hg\right)\to\Phi_{f}\left(g\right)\) as \(h\to e_{G}\). This justifies the continuity of \(\Phi_{f}\). Using \(\theta\)-stationarity and Fubini's theorem, for every \(n\geq 1\) we have
\begin{align*}\int_{G}\Phi_{f}\left(g\right)^{2}d\theta^{\ast n}\left(g\right)
&=2\int_{X}f\left(x\right)^{2}d\mu\left(x\right)-2\int_{X}\int_{G}f\left(g.x\right)\cdot f\left(x\right)d\theta^{\ast n}\left(g\right)\\
&=2\int_{X}f\left(x\right)^{2}d\mu\left(x\right)-2\int_{X}T_{\theta^{\ast n}}f\left(x\right)\cdot f\left(x\right)d\mu\left(x\right)=0,
\end{align*}
hence \(\Phi_{f}\equiv 0\in L^{\infty}\left(\theta^{\ast n}\right)\). Since this holds for every \(n\geq 1\), by the admissibility of \(\theta\) it follows that \(\Phi_{f}=0\) on a dense subset of \(G\), and by continuity we conclude that \(\Phi_{f}\equiv0\).
\end{proof}

For a measured group \(\left(G,\theta\right)\), recall the probability space 
\[\left(\Omega,\mathbb{P}\right):=\left(G^{\mathbb{N}},\theta^{\otimes\mathbb{N}}\right),\]
and the shift transformation
\[\sigma:\Omega\to\Omega,\quad\sigma:\left(\omega_{0},\omega_{1},\omega_{2}\right)\mapsto\left(\omega_{1},\omega_{2},\dotsc\right).\]
For a stationary \(\left(G,\theta\right)\)-space \(\left(X,\mu\right)\), there is associated with the {\bf forward transformation}, \(\tau\), given by
\[\left(\Omega_{X},\mathbb{P}_{\mu}\right):=\left(\Omega\times X,\mathbb{P}\otimes\mu\right),\quad\tau\left(\omega,x\right)=\left(\sigma\omega,\omega_{0}.x\right).\]
In fact, the \(\theta\)-stationarity of \(\mu\) is fully encoded by the \(\mathbb{P}_{\mu}\)-invariance of \(\tau\). For details see \cite[\S 2.4]{benoist2016random}. Then by \cite[Prop.~2.14]{benoist2016random} for the Markov operator \(T_{\theta}\), combined with Fact~\ref{fct:invariance}, we have:

\begin{prop}\label{prop:ergodic}
A stationary \(\left(G,\theta\right)\)-space is ergodic (as a \(G\)-space) if and only if its associated forward transformation is ergodic (as a transformation).
\end{prop}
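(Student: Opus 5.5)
The statement is Proposition~\ref{prop:ergodic}: a stationary \(\left(G,\theta\right)\)-space \(\left(X,\mu\right)\) is ergodic as a \(G\)-space if and only if its forward transformation \(\tau\) is ergodic. The plan is to pass through the Markov operator \(T_{\theta}\) on \(L^{\infty}\left(\mu\right)\) and show that both ergodicity notions are equivalent to the triviality of its fixed space:
\[\{f\in L^{\infty}\left(\mu\right):T_{\theta}f=f\}=\mathbb{R}\mathbf{1}.\]
By Fact~\ref{fct:invariance}, the fixed points of \(T_{\theta}\) in \(L^{\infty}\left(\mu\right)\) are exactly the \(G\)-invariant bounded functions. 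Hence \(G\)-ergodicity of \(\left(X,\mu\right)\) is equivalent to triviality of this fixed space, since it suffices to test ergodicity on bounded invariant functions. It therefore remains to show that ergodicity of \(\tau\) is equivalent to \(T_{\theta}\) having only constant bounded fixed points. This is the content of \cite[Prop.~2.14]{benoist2016random}, and I would reprove it directly.

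For the easy direction, suppose \(\tau\) is ergodic and \(T_{\theta}f=f\). Then \(f\) is \(G\)-invariant by Fact~\ref{fct:invariance}, so \(F\left(\omega,x\right):=f\left(x\right)\) satisfies
\[F\circ\tau\left(\omega,x\right)=f\left(\omega_{0}.x\right)=f\left(x\right)\quad\text{for }\mathbb{P}_{\mu}\text{-a.e. }\left(\omega,x\right),\]
using Fubini over \(\omega_{0}\). Ergodicity of \(\tau\) makes \(F\) constant, hence \(f\) is constant.

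For the converse, assume every bounded \(T_{\theta}\)-fixed function is constant, and let \(F\in L^{\infty}\left(\mathbb{P}_{\mu}\right)\) satisfy \(F\circ\tau=F\). Iterating gives \(F\left(\omega,x\right)=F\left(\sigma^{n}\omega,z_{n}\left(\omega\right)^{-1}\ldots\right)\); more precisely \(\tau^{n}\left(\omega,x\right)=\left(\sigma^{n}\omega,\omega_{n-1}\dotsm\omega_{0}.x\right)\). Define \(f\left(x\right):=\int_{\Omega}F\left(\omega,x\right)d\mathbb{P}\left(\omega\right)\). Invariance under \(\tau\) together with independence of \(\omega_{0}\) from \(\sigma\omega\) gives \(T_{\theta}f=f\), so \(f\equiv c\) is constant. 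To upgrade this to \(F\equiv c\), apply the martingale convergence theorem. Let \(\mathcal{G}_{n}\) be the \(\sigma\)-algebra generated by \(x\) and \(\omega_{0},\dotsc,\omega_{n-1}\). Then
\[\mathbb{E}\left[F\mid\mathcal{G}_{n}\right]\left(\omega,x\right)=\int F\left(\sigma^{n}\omega',\omega_{n-1}\dotsm\omega_{0}.x\right)d\mathbb{P}\left(\omega'\right)=f\left(\omega_{n-1}\dotsm\omega_{0}.x\right)=c.\]
Since \(\bigvee_{n}\mathcal{G}_{n}\) is the full \(\sigma\)-algebra, these conditional expectations converge to \(F\) almost surely, so \(F=c\) a.e.

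The main obstacle is this last step: the conditional expectation identity must use invariance of \(F\) under \(\tau^{n}\) together with the product structure \(\mathbb{P}\otimes\mu\). In particular one needs that \(\left(\omega_{0},\dotsc,\omega_{n-1},x\right)\) and \(\sigma^{n}\omega\) are independent, and that the \(\mu\)-stationarity makes the pushed point have law \(\mu\). That second fact is what makes \(f\) well defined almost everywhere along the orbit, since \(f\) is only an \(L^{\infty}\left(\mu\right)\)-class and must be evaluated at \(\omega_{n-1}\dotsm\omega_{0}.x\). Once this is checked, the proposition follows by combining the two equivalences.
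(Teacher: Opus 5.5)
Your proposal is correct and follows the paper's route. Fact~\ref{fct:invariance} identifies the bounded $G$-invariant functions with the bounded $T_{\theta}$-fixed functions, and the equivalence between ergodicity of $\tau$ and triviality of that fixed space is exactly \cite[Prop.~2.14]{benoist2016random}, which the paper cites. You reprove it correctly by averaging $F$ over $\omega$ and applying L\'evy's upward theorem to $\mathbb{E}[F\mid\mathcal{G}_{n}]=f(\omega_{n-1}\dotsm\omega_{0}.x)$. Only delete the stray expression $z_{n}(\omega)^{-1}\ldots$, since $z_{n}(\omega)^{-1}\neq\omega_{n-1}\dotsm\omega_{0}$; the formula for $\tau^{n}$ you give immediately afterwards is the right one.
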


\subsection{Final proof of Theorem~\ref{thm:typeIII}}

The first part of Theorem~\ref{thm:typeIII} turns out to be a generalization of the fact that an ergodic probability preserving transformation admit no infinite {\sc a.c.i.m.}:

\begin{prop}\label{prop:ergodicacim}
Every ergodic stationary \(\left(G,\theta\right)\)-space admitting an {\sc a.c.i.m.} is necessarily a probability preserving \(G\)-space.
\end{prop}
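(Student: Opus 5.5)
Let \((X,\mu)\) be an ergodic stationary \((G,\theta)\)-space with an a.c.i.m. \(m\). As in the proof of Lemma~\ref{lem:ergdecompacim}, \(m\) is supported on a \(G\)-invariant set on which \(m\sim\mu\). By ergodicity this set is \(\mu\)-conull, so we may assume \(m\sim\mu\). Put \(\kappa:=dm/d\mu>0\). Invariance of \(m\) gives \(\frac{dg\mu}{d\mu}=\kappa/g\kappa\) \(\mu\)-a.e. for each \(g\). The goal is to show that \(\kappa\) is \(\mu\)-a.e. constant, which makes \(\mu\) invariant. The plan is to transfer the problem to the forward transformation \(\tau\) on \((\Omega_{X},\mathbb{P}_{\mu})\), which is measure preserving and, by Proposition~\ref{prop:ergodic}, ergodic.

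Define \(F(\omega,x):=\kappa(\omega_{0}.x)/\kappa(x)\); this ratio is the obstruction to invariance. Its key property is the integrability identity
\[\int F\,d\mathbb{P}_{\mu}=\int_{X}\int_{G}\frac{\kappa(g.x)}{\kappa(x)}d\theta(g)\,d\mu(x)=\int_{X}\int_{G}\frac{dg^{-1}\mu}{d\mu}(x)\,d\theta(g)\,d\mu(x).\]
For fixed \(g\) the inner integral over \(x\) equals \(1\), so \(\int F\,d\mathbb{P}_{\mu}=1\). Set \(K(\omega,x):=\kappa(x)\). Then \(F=K\circ\tau/K\), so \(\log F=\log K\circ\tau-\log K\) is a measurable coboundary over an ergodic probability preserving transformation.

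To finish, I would use the following standard fact. If \(T\) is ergodic and measure preserving on a probability space, \(K>0\) is measurable, and \(\int K\circ T/K\le 1\), then \(K\) is constant. One route is Jensen's inequality: \(\int\log(K\circ T/K)\le\log\int K\circ T/K=0\), with equality if and only if \(K\circ T/K\) is constant, equal to \(1\). The catch is that \(\log K\) need not be integrable, so the coboundary need not integrate to zero. To handle this, I would truncate, replacing \(K\) by \(\min(\max(K,a),b)\) and using \(\log\)-boundedness there. Alternatively I would prove it directly: for the sets \(\{K>c\}\), invariance of \(\mathbb{P}_{\mu}\) under \(\tau\) combined with the coboundary form yields \(\int_{\{K\circ\tau\le c<K\}}\) versus \(\int_{\{K\le c<K\circ\tau\}}\) comparisons. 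Another option is to apply Jensen along the ergodic averages \(\frac1n\sum F\circ\tau^{k}\). Once \(K\circ\tau=K\) a.e. is established, ergodicity of \(\tau\) gives that \(K\), hence \(\kappa\), is constant, and \(\mu\) is \(G\)-invariant.

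The main obstacle is the rigorous version of the Jensen step without integrability of \(\log\kappa\). I would first try the telescoping-free argument: by Jensen and stationarity of \(\mathbb{P}_{\mu}\) applied to the bounded function \(\psi_{a,b}=\log\min(\max(K,a),b)\), one gets \(\int(\psi_{a,b}\circ\tau-\psi_{a,b})=0\). Since the map \(t\mapsto\log\min(\max(t,a),b)\) is monotone and \(1\)-Lipschitz in \(\log t\), a pointwise inequality then gives \(\psi_{a,b}\circ\tau-\psi_{a,b}\le\log F\) wherever \(F\ge1\), and the reverse where \(F\le1\). Combining these with strict concavity should force \(F=1\) a.e. If this becomes delicate, the fallback is to use Fact~\ref{fct:invariance}: show \(T_{\theta}\) applied to bounded increasing functions of \(\kappa\) gives equality, which makes them \(G\)-invariant.
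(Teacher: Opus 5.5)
Your plan is the paper's: reduce to \(m\sim\mu\), pass to the forward transformation \(\tau\), show the density is \(\tau\)-invariant, and conclude from the ergodicity of \(\tau\). But the key identity has the Radon--Nikodym derivative upside down. With \(\kappa=dm/d\mu\), your own formula \(\frac{dg\mu}{d\mu}=\kappa/g\kappa\) gives \(\frac{dg^{-1}\mu}{d\mu}\left(x\right)=\kappa\left(x\right)/\kappa\left(g.x\right)\). This is the \emph{reciprocal} of the integrand defining \(F\). Consequently
\[\int F\,d\mathbb{P}_{\mu}=\int_{G}\int_{X}\Big(\frac{dg^{-1}\mu}{d\mu}\Big)^{-1}d\mu\,d\theta\left(g\right)=\int_{G}\Big\Vert\frac{dg\mu}{d\mu}\Big\Vert_{L^{2}\left(\mu\right)}^{2}d\theta\left(g\right)\geq 1,\]
and this may be \(+\infty\). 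So the hypothesis \(\int K\circ T/K\leq 1\) of your ``standard fact'' is not verified. This orientation cannot be rescued. The argument you sketch shows that \(\int K\circ T/K\geq 1\) for \emph{every} positive \(K\) once \(T\) preserves a probability measure, so the inequality you actually obtain carries no information. The fix is to invert: take \(K\left(\omega,x\right):=\kappa\left(x\right)^{-1}\), i.e.\ the density \(d\mu/dm\), which is what the paper uses. Then
\[K\circ\tau/K\left(\omega,x\right)=\kappa\left(x\right)/\kappa\left(\omega_{0}.x\right)=\frac{d\omega_{0}^{-1}\mu}{d\mu}\left(x\right),\]
whose \(\mathbb{P}_{\mu}\)-integral is exactly \(1\).

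With this correction the rest goes through, and your truncation route makes the Jensen step rigorous. Put \(F=K\circ\tau/K\), \(\psi=\log\min\left(\max\left(K,a\right),b\right)\) and \(D=\psi\circ\tau-\psi\). The following hold:
\begin{itemize}
\item \(\int D\,d\mathbb{P}_{\mu}=0\) by \(\tau\)-invariance, since \(\psi\) is bounded;
\item \(D^{\pm}\leq\left(\log F\right)^{\pm}\);
\item \(D\to\log F\) pointwise as \(a\to0\) and \(b\to+\infty\);
\item \(\left(\log F\right)^{+}\leq F\in L^{1}\).
\end{itemize}
Dominated convergence for \(D^{+}\) and Fatou for \(D^{-}\) give \(\log F\in L^{1}\) with \(\int\log F\geq 0\). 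Jensen gives \(\int\log F\leq\log\int F=0\), and equality in Jensen forces \(F\equiv 1\), i.e.\ \(K\circ\tau=K\). In the paper, \(\mathbb{P}_{m}=\mathbb{P}\otimes m\) is observed to be a \(\tau\)-invariant \(\sigma\)-finite measure equivalent to \(\mathbb{P}_{\mu}\), with \(d\mathbb{P}_{\mu}/d\mathbb{P}_{m}=K\). The \(\tau\)-invariance of \(K\) is then asserted directly from the invariance of both measures. Your corrected identity \(\int K\circ\tau/K\,d\mathbb{P}_{\mu}=1\) is precisely the \(\tau\)-invariance of \(\mathbb{P}_{m}\) tested against \(K\). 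Your Jensen argument then gives a self-contained proof of that step, which is not purely formal because \(\tau\) is non-invertible and \(\mathbb{P}_{m}\) is infinite. So the corrected proposal is the paper's proof with that step made explicit. As written, however, the inversion breaks it at the one step that carries the content.
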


\begin{proof}[Proof of Proposition~\ref{prop:ergodicacim}]
Let \(\left(X,\mu\right)\) be an ergodic stationary \(\left(G,\theta\right)\)-space and let \(m\) be an {\sc a.c.i.m.} Since the set \(\{dm/d\mu>0\}\) is \(G\)-invariant modulo \(\mu\), by ergodicity it is a \(\mu\)-conull set, and therefore \(m\sim\mu\). Let then \(\kappa:X\to\mathbb{R}_{>0}\) be a positive Borel function satisfying \(\kappa=d\mu/dm\) \(m\)- hence \(\mu\)-a.e. For the forward transformation \(\tau\) defined on \(\left(\Omega_{X},\mathbb{P}_{\mu}\right)=\left(\Omega\times X,\mathbb{P}\otimes\mu\right)\), consider the \(\sigma\)-finite measure \(\mathbb{P}_{m}:=\mathbb{P}\otimes m\). Note that \(\mathbb{P}_{m}\) is an {\sc a.c.i.m.} of \(\tau\) and, moreover, it is mutually absolutely continuous with \(\mathbb{P}_{\mu}\) and
\[\frac{d\mathbb{P}_{\mu}}{d\mathbb{P}_{m}}=\kappa\circ\pi_{X}\quad\mathbb{P}_{m}\text{- hence }\mathbb{P}_{\mu}\text{-a.e.,}\]
where \(\pi_{X}:\Omega_{X}\to X\) is the projection; indeed, the \(G\)-invariance of \(\mathbb{P}_{m}\) can be routinely verified on rectangular sets using the \(G\)-invariance of \(m\), so it follows by the monotone class theorem. Since both \(\mathbb{P}_{m}\) and \(\mathbb{P}_{\mu}\) are \(\tau\)-invariant, it follows that \(\kappa\circ\pi_{X}\) is a \(\tau\)-invariant function. By Proposition~\ref{prop:ergodic}, since \(\left(X,\mu\right)\) is \(G\)-ergodic, the forward transformation \(\tau\) is ergodic, and it follows that there exists \(c>0\) such that \(\kappa\circ\pi_{X}\equiv c\,\) \(\mathbb{P}_{m}\)- hence \(\mathbb{P}_{\mu}\)-a.e. By Fubini's theorem we deduce that \(\kappa\equiv c\,\) \(m\)- hence \(\mu\)-a.e. and therefore \(\mu\) is \(G\)-invariant, thus \(\left(X,\mu\right)\) is a probability preserving \(G\)-space.
\end{proof}

To deduce the second part of Theorem~\ref{thm:typeIII} we will use Lemma~\ref{lem:ergdecompacim}:

\begin{proof}[Final proof of Theorem~\ref{thm:typeIII}]
Let \(\left(X,\mu\right)\) be a stationary \(\left(G,\theta\right)\)-space with ergodic decomposition map \(\upsilon:X\to\mathcal{E}_{\varpi}^{G}\left(X\right)\).
Since \(\upsilon\) is \(G\)-invariant, we obtain the \(G\)-invariant measurable sets
\[X_{1}:=\{x\in X:\upsilon_{x}\text{ is }G\text{-invariant}\}\quad\text{and}\quad X_{\theta}:=X\backslash X_{1}.\]
Since \(X_{1}\in\mathcal{I}\left(G\right)\) while \(\upsilon_{x}\left(\cdot\right)=\mathbb{E}_{\mu}\left[\cdot\mid\mathcal{I}\left(G\right)\right]\left(x\right)\) for \(\mu\)-a.e. \(x\in X\), we obtain that
\[\mu\left(\cdot\cap X_{1}\right)=\int_{X_{1}}\upsilon_{x}\left(\cdot\right)d\mu\left(x\right)\text{ as measures on }X.\]
Therefore, by the defining property of \(X_{1}\), \(\mu\left(\cdot\cap X_{1}\right)\) is a \(G\)-invariant measure. Let the measurable set
\[E:=\big\{x\in X:\int_{G}\varpi_{g^{-1}}\left(x\right)d\theta\left(g\right)=1\big\}.\]
Then \(E\) is \(\mu\)-conull by the \(\theta\)-stationarity of \(\mu\), hence it is \(\upsilon_{x}\)-conull for \(\mu\)-a.e. \(x\in X\), so \(\upsilon_{x}\) is \(\theta\)-stationary for \(\mu\)-a.e. \(x\in X\). We finally show that \(\mu\left(\cdot\cap X_{\theta}\right)\) admits no {\sc a.c.i.m.} By the same reasoning as above,
\[\mu\left(\cdot\cap X_{\theta}\right)=\int_{X_{\theta}}\upsilon_{x}\left(\cdot\right)\text{ as measures on }X,\]
hence \(\mu\left(\cdot\cap X_{\theta}\right)\) is a \(\theta\)-stationary measure. Now if \(\mu\left(\cdot\cap X_{\theta}\right)\) has an {\sc a.c.i.m.} then by Lemma~\ref{lem:ergdecompacim} a positive measure of its ergodic components has an {\sc a.c.i.m.} However, these ergodic components are ergodic \(\theta\)-stationary measures which are not \(G\)-invariant, so by Proposition~\ref{prop:ergodicacim} they admit no {\sc a.c.i.m.}
\end{proof}

\section{Stationary actions of arbitrary type}\label{sct:type}

Recall that an ergodic nonsingular \(G\)-space \(\left(X,\mu\right)\) has its {\bf Krieger associated flow} defined as follows. Let \(\alpha=-\log\varpi:G\times X\to\mathbb{R}\) be the additive Radon--Nikodym cocycle of \(\left(X,\mu\right)\). Consider the Maharam extension of \(\left(X,\mu\right)\), which is the infinite measure preserving \(G\)-space
\[\left(X\times\mathbb{R},\mu\otimes\eta\right),\]
where \(\eta\) is given by \(d\eta\left(t\right)=e^{-t}dt\), with the action
\[g.\left(x,t\right)=\left(g.x,t+\alpha_{g}\left(x\right)\right).\]
This action of \(G\) commutes with the \(\mathbb{R}\)-action
\[s.\left(x,t\right)=\left(x,t+s\right),\]
and therefore \(\mathbb{R}\) acts on the space of ergodic components of the Maharam extension. This \(\mathbb{R}\)-action is called the {\bf Krieger associated flow}. When \(\left(X,\mu\right)\) is of type \(\mathrm{II}\) (either type \(\mathrm{II}_{1}\) or type \(\mathrm{II}_{\infty}\)), the cocycle \(\alpha\) is a coboundary and therefore the Maharam extension is isomorphic to the free transitive flow on the second coordinate. Then the significance of the Maharam extension is in type \(\mathrm{III}\) actions, where the Krieger associated flow enables one to further classify type \(\mathrm{III}\) actions into type \(\mathrm{III}_{\lambda}\) actions, \(0\leq\lambda\leq 1\) as follows. Type \(\mathrm{III}_{1}\) corresponds to the trivial flow on one point, in which case the Maharam extension is ergodic; Type \(\mathrm{III}_{\lambda}\) for \(0<\lambda<1\) corresponds to the periodic flow on \(\left[0,-\log\lambda\right)\); Type \(\mathrm{III}_{0}\) corresponds to all other properly ergodic (non-transitive) flow. In what follows, we adopt a construction introduced by Katznelson--Weiss~\cite[\S5]{katznelson1991classification} for \(G=\mathbb{Z}\) and extended by Vaes--Verjans \cite[\S3]{vaes2023orbit} for general \(G\), to show how other types can arise in stationary actions.

\smallskip

Let \(\left(G,\theta\right)\) be a measured group. Pick any probability preserving \(\mathbb{R}\)-space \(\left(Y,\nu\right)\). We will cook up a stationary \(\left(G,\theta\right)\)-space whose Krieger associated flow is \(\left(Y,\nu\right)\). Start with some stationary \(\left(G,\theta\right)\)-space \(\left(X,\mu\right)\) with additive Radon--Nikodym cocycle by \(\alpha=-\log\varpi:G\times X\to\mathbb{R}\). Consider the stationary \(\left(G,\theta\right)\)-space \(\left(X\times Y,\mu\otimes\nu\right)\) with the action
\[g.\left(x,y\right)=\left(g.x,\alpha_{g}\left(x\right).y\right).\]
Its Maharam extension is the measure preserving \(G\)-space \(\left(X\times Y\times\mathbb{R},\mu\otimes\nu\otimes\eta\right)\), with the action
\[g.\left(x,y,t\right)=\left(g.x,\alpha_{g}\left(x\right).y,t+\alpha_{g}\left(x\right)\right).\]
Denoting by \(L^{\infty}\left(\bullet\right)^{G}\) the space of \(G\)-invariant \(L^{\infty}\left(\bullet\right)\)-functions, define an injective map
\begin{equation}\label{eq:mapiso}
L^{\infty}\left(\nu\right)\to L^{\infty}\left(\mu\otimes\nu\otimes\eta\right)^{G},\quad f\mapsto\widetilde{f},\quad\widetilde{f}\left(x,y,t\right):=f\left(-t.y\right).
\end{equation}
(note \(\widetilde{f}\) is indeed \(G\)-invariant). The following is a special case of \cite[Proposition 3.4]{vaes2023orbit}.

\begin{prop}\label{prop:alltype}
Suppose \(\left(X,\mu\right)\) has type \(\mathrm{III}_{1}\) and \(\left(Y,\nu\right)\) is ergodic.
\begin{enumerate}
    \item The map \eqref{eq:mapiso} forms an \(\mathbb{R}\)-isomorphism 
    \(L^{\infty}\left(\nu\right)\cong L^{\infty}\left(\mu\otimes\nu\otimes\eta\right)^{G}\), where the latter space is equipped with the \(\mathbb{R}\)-action on the third coordinate.
    \item \(\left(X\times Y,\mu\otimes\nu\right)\) is an ergodic stationary \(\left(G,\theta\right)\)-space, whose Krieger associated flow is \(\left(Y,\nu\right)\).
\end{enumerate}
\end{prop}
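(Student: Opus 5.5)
The plan is to reduce everything to the ergodicity of the Maharam extension of \(\left(X,\mu\right)\), which is exactly the type \(\mathrm{III}_{1}\) hypothesis, together with the commuting \(\mathbb{R}\)-action on \(Y\).

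First, the routine verifications. The formula \(g.\left(x,y\right)=\left(g.x,\alpha_{g}\left(x\right).y\right)\) defines an action because \(\alpha\) is a strict additive cocycle. The measure \(\mu\otimes\nu\) is \(\theta\)-stationary: \(\nu\) is \(\mathbb{R}\)-invariant, so integrating over \(y\) first reduces stationarity to that of \(\mu\). The Radon--Nikodym cocycle of \(\mu\otimes\nu\) is \(\varpi_{g}\left(x\right)\), again by \(\mathbb{R}\)-invariance of \(\nu\). Hence the additive cocycle is \(\alpha_{g}\left(x\right)\), and the Maharam extension is as displayed. The map \eqref{eq:mapiso} lands in \(G\)-invariant functions, since \(-\left(t+\alpha_{g}\left(x\right)\right).\left(\alpha_{g}\left(x\right).y\right)=-t.y\). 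It intertwines the \(\mathbb{R}\)-actions: translating \(t\) by \(s\) corresponds to \(f\mapsto f\left(-s.\,\cdot\right)\), a genuine \(\mathbb{R}\)-action on \(L^{\infty}\left(\nu\right)\). It is injective and isometric because for fixed \(t\) the map \(y\mapsto -t.y\) preserves \(\nu\).

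The main step is surjectivity in (1). Consider the change of variables
\[\Psi\left(x,y,t\right)=\left(x,t,-t.y\right).\]
It is a measure-class isomorphism of \(X\times Y\times\mathbb{R}\) that conjugates the \(G\)-action to \(g.\left(x,t,z\right)=\left(g.x,t+\alpha_{g}\left(x\right),z\right)\). This is the Maharam extension of \(\left(X,\mu\right)\) times the identity on \(\left(Y,\nu\right)\). Take a \(G\)-invariant \(F\in L^{\infty}\). For \(\nu\)-a.e. \(z\), Fubini shows that \(F\circ\Psi^{-1}\left(\cdot,\cdot,z\right)\) is a \(G\)-invariant function on the Maharam extension of \(\left(X,\mu\right)\). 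Making this rigorous requires that invariance for each \(g\) holds a.e., after which one passes to a.e. \(z\) for a.e. \(g\). The set of \(g\) for which invariance holds is then a conull subgroup, hence all of \(G\), exactly as in the proof of Lemma~\ref{lem:ergdecompacim}. Since \(\left(X,\mu\right)\) is of type \(\mathrm{III}_{1}\), its Maharam extension is ergodic, so this slice equals a constant \(f\left(z\right)\). Thus \(F=\widetilde{f}\), which proves (1). I expect this measurability and Fubini bookkeeping to be the only delicate point.

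For (2), take a \(G\)-invariant \(h\in L^{\infty}\left(\mu\otimes\nu\right)\). Its lift \(h\left(x,y\right)\) to the Maharam extension is \(G\)-invariant and also \(\mathbb{R}\)-invariant. By (1) it equals \(\widetilde{f}\) for some \(f\in L^{\infty}\left(\nu\right)\), and \(\mathbb{R}\)-invariance of \(\widetilde{f}\) means \(f\) is \(\mathbb{R}\)-invariant. Ergodicity of \(\nu\) then makes \(f\) constant, so \(h\) is constant, giving ergodicity. The Krieger associated flow is the \(\mathbb{R}\)-action on \(L^{\infty}\) of the Maharam extension restricted to \(G\)-invariants. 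By (1) this is isomorphic, as an \(\mathbb{R}\)-von Neumann algebra, to \(L^{\infty}\left(\nu\right)\) with the flow on \(Y\) (up to the sign convention \(s\mapsto -s\), which gives an isomorphic flow). Hence the associated flow is \(\left(Y,\nu\right)\).
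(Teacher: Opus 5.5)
Your proposal is correct and follows essentially the same route as the paper. Your change of variables satisfies $F\circ\Psi^{-1}(x,t,z)=F(x,t.z,t)$, which is exactly the paper's slice $F_{z}$ from its proof of (1); you supply the Fubini and conull-subgroup bookkeeping that the paper avoids by taking $F$ strictly invariant, and your argument for (2) is the paper's.

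One correction to your closing remark: a flow need not be isomorphic to its time reversal, so it is not true in general that the sign change $s\mapsto -s$ gives an isomorphic flow. With the paper's definition of the associated flow, your computation actually yields the reversed flow on $Y$. The clean fix is to run the construction with the reversed flow, i.e.\ use $(-\alpha_{g}(x)).y$ in the skew action; then the invariant coordinate is $t.y$ and the associated flow is exactly $(Y,\nu)$. This is harmless for realizing the types $\mathrm{III}_{\lambda}$, since those are invariant under time reversal.
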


\begin{proof}
For (1), the map \eqref{eq:mapiso} is evidently injective, and it is \(\mathbb{R}\)-equivariant since
\[s.\widetilde{f}\left(x,y,t\right)=\widetilde{f}\left(x,y,t-s\right)=f\left(\left(s-t\right).y\right)=s.f\left(-t.y\right)=\widetilde{s.f}\left(x,y,t\right).\]
For the surjectivity, let \(F\in L^{\infty}\left(\mu\otimes\nu\otimes\eta\right)^{G}\) be strictly \(G\)-invariant. For \(y\in Y\) let \(F_{y}\left(x,t\right):=F\left(x,t.y,t\right)\). Then \(F_{y}\) is \(G\)-invariant for the Maharam extension of \(\left(X,\mu\right)\), which is ergodic as \(\left(X,\mu\right)\) is of type \(\mathrm{III}_{1}\), hence \(F_{y}\left(x,t\right)\) is \(\mu\otimes\eta\)-a.e. a constant. Letting \(f\left(y\right)\) be this constant, then evidently \(F=\widetilde{f}\).

For (2), using part (1) and the ergodicity of \(\left(Y,\nu\right)\), there is an \(\mathbb{R}\)-isomorphisms
\[\mathbb{R}\cong L^{\infty}\left(\nu\right)^{\mathbb{R}}\cong\big(L^{\infty}\left(\mu\otimes\nu\otimes\eta\right)^{G}\big)^{\mathbb{R}}.\]
However, there is a \(G\)-equivariant embedding \(L^{\infty}\left(\mu\otimes\nu\right)^{G}\hookrightarrow\big(L^{\infty}\left(\mu\otimes\nu\otimes\eta\right)^{G}\big)^{\mathbb{R}}\) (via forgetting the third coordinate), so \(L^{\infty}\left(\mu\otimes\nu\right)^{G}\cong\mathbb{R}\) and \(\left(X\times Y,\mu\otimes\nu\right)\) is ergodic. Finally, by part (1), the space of \(G\)-ergodic components of the Maharam extension \(\left(X\times Y\times\mathbb{R},\mu\otimes\nu\otimes\eta\right)\) is \(\left(Y,\nu\right)\).
\end{proof}

There are many examples of stationary actions of type \(\mathrm{III}_{1}\) \cite{izumi2008ratio,bowen2014type,gekhtman2013stable}, so Proposition~\ref{prop:alltype} is not vacuous. However, in the above setting, unless \(\left(Y,\nu\right)\) is trivial, the stationary \(\left(G,\theta\right)\)-space \(\left(X\times Y,\mu\otimes\nu\right)\) is not a boundary and not even SAT (see \cite{furstenberg2010stationary}). This leads to the following famous problem, whose significance follows from the amenability of Poisson boundaries, where Krieger type plays a canonical role. Other than a few geometric setups \cite{izumi2008ratio,bowen2014type,gekhtman2013stable} it seems to be widely open (cf. \cite[\S4.4]{bowen2013pointwise}):

\begin{prob}
For what \(0\leq\lambda\leq 1\) type \(\mathrm{III}_{\lambda}\) occurs in the class of Poisson boundaries? In particular, does type \(\mathrm{III}_{0}\) ever occur in Poisson boundaries? What about other boundary actions?
\end{prob}

\section{Harnack's inequality in locally compact groups}\label{app:Harn}

\subsection{The harmonic majorant}

Let \(\left(G,\theta\right)\) be a measured group. In our study of Harnack's inequality, the central object is the normalized slice of the cone of positive \(\theta\)-harmonic functions on \(G\):
\[\mathbb{H}\left(G,\theta\right):=\{u:G\to\mathbb{R}_{>0}:u\ast\theta=u,\,u\left(e_{G}\right)=1\}.\]
Define the {\bf harmonic majorant} of \(\left(G,\theta\right)\) to be the function
\begin{equation}\label{eq:mjorant}
\mathfrak{m}_{\theta}:G\to\left[1,+\infty\right],\quad\mathfrak{m}_{\theta}\left(g\right):=\sup\big\{u\left(g\right):u\in\mathbb{H}\left(G,\theta\right)\big\}.
\end{equation}

We say that \(\left(G,\theta\right)\) satisfies the {\bf Harnack's inequality} if its harmonic majorant \(\mathfrak{m}_{\theta}\) is finite-valued. We note some basic properties of the harmonic majorant:

\begin{prop}\label{prop:mtheta}
For every measured group \(\left(G,\theta\right)\), the following properties hold:
\begin{enumerate}
    \item \(\mathfrak{m}_{\theta}\) is submultiplicative.
    \item If \(\mathfrak{m}_{\theta}\) is finite-valued, then it is locally bounded (i.e. bounded on compact sets).
    \item For every \(u\in\mathbb{H}\left(G,\theta\right)\) and every \(g,h\in G\),
    \[\mathfrak{m}_{\theta}\left(g^{-1}\right)^{-1}\leq\tfrac{u\left(hg\right)}{u\left(h\right)}\leq\mathfrak{m}_{\theta}\left(g\right).\]
    \item For every stationary \(\left(G,\theta\right)\)-space \(\left(X,\mu\right)\) and every \(g\in G\),
    \[\mathfrak{m}_{\theta}\left(g^{-1}\right)^{-1}\leq\tfrac{dg\mu}{d\mu}\left(x\right)\leq\mathfrak{m}_{\theta}\left(g\right)\text{ for }\mu\text{-a.e. }x\in X.\]
\end{enumerate}
\end{prop}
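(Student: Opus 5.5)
The plan is to prove item (3) first, since items (1) and (4) follow from it directly and item (2) builds on (1).

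\emph{Item (3).} The key observation is that left translates of $\theta$-harmonic functions are harmonic. Indeed, $u\ast\theta\left(g\right)=\int_{G}u\left(gk\right)d\theta\left(k\right)$, so for $u\in\mathbb{H}\left(G,\theta\right)$ and fixed $h\in G$ the function $v\left(g\right):=u\left(hg\right)/u\left(h\right)$ is positive, $\theta$-harmonic, and satisfies $v\left(e_{G}\right)=1$. Hence $v\in\mathbb{H}\left(G,\theta\right)$, and the definition \eqref{eq:mjorant} gives $u\left(hg\right)/u\left(h\right)\leq\mathfrak{m}_{\theta}\left(g\right)$. For the lower bound, apply the same inequality at the point $hg$ with the element $g^{-1}$:
\[u\left(h\right)/u\left(hg\right)=u\left(hg\cdot g^{-1}\right)/u\left(hg\right)\leq\mathfrak{m}_{\theta}\left(g^{-1}\right).\]

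\emph{Item (1).} For $u\in\mathbb{H}\left(G,\theta\right)$ write $u\left(gh\right)=\frac{u\left(gh\right)}{u\left(g\right)}\cdot u\left(g\right)\leq\mathfrak{m}_{\theta}\left(h\right)\mathfrak{m}_{\theta}\left(g\right)$ using item (3). Taking the supremum over $u$ gives submultiplicativity.

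\emph{Item (4).} Fix a bounded Borel $f\geq0$ on $X$ and $\epsilon>0$. The Poisson transform $u:=P_{\theta}^{\mu}\left(f+\epsilon\right)$ is a positive bounded $\theta$-harmonic function, so $u/u\left(e_{G}\right)\in\mathbb{H}\left(G,\theta\right)$. Item (3) with $h=e_{G}$ then gives
\[\mathfrak{m}_{\theta}\left(g^{-1}\right)^{-1}\int\left(f+\epsilon\right)d\mu\leq\int\left(f+\epsilon\right)\,dg\mu\leq\mathfrak{m}_{\theta}\left(g\right)\int\left(f+\epsilon\right)d\mu.\]
Letting $\epsilon\to0$ and taking $f=\mathbf{1}_{A}$ over all Borel sets $A$ yields the two-sided bound on $dg\mu/d\mu$ $\mu$-a.e. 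If $\mathfrak{m}_{\theta}\left(g\right)=+\infty$ the upper bound is trivial, and likewise the lower bound if $\mathfrak{m}_{\theta}\left(g^{-1}\right)=+\infty$.

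\emph{Item (2).} This is the main obstacle. The plan is a Baire/Steinhaus argument combined with item (1). Consider the symmetric sets
\[F_{n}:=\{g:\mathfrak{m}_{\theta}\left(g\right)\leq n,\ \mathfrak{m}_{\theta}\left(g^{-1}\right)\leq n\}.\]
By finiteness they cover $G$, so some $F_{n}$ has positive Haar measure. Steinhaus' theorem then gives an identity neighborhood $V\subseteq F_{n}F_{n}$, and by item (1) $\mathfrak{m}_{\theta}\leq n^{2}$ on $V$. A compact $K$ is covered by finitely many translates $g_{i}V$, and $\mathfrak{m}_{\theta}\left(g_{i}v\right)\leq\mathfrak{m}_{\theta}\left(g_{i}\right)n^{2}$, so $\mathfrak{m}_{\theta}$ is bounded on $K$.

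The delicate point is measurability of $\mathfrak{m}_{\theta}$, which is a supremum over an uncountable family of merely measurable functions. I would first try to show that each $\{\mathfrak{m}_{\theta}\leq n\}$ is universally (or Haar) measurable, which suffices for Steinhaus. One route is to express it through the representation
\[u\left(g\right)=\int u\left(k\right)\,g\varphi\left(k\right)dm_{G}\left(k\right),\]
viewing $\mathbb{H}\left(G,\theta\right)$ as a standard Borel (or analytic) subset of a space of measures, so that the sets $\{\mathfrak{m}_{\theta}>n\}$ become projections of Borel sets and hence analytic. If that fails, the fallback is a Baire-category version: replace Haar measure by second-category sets and use Pettis' theorem, again needing the Baire property, which analytic sets have.
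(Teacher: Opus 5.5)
Your arguments for (1), (3) and (4), and the Steinhaus--submultiplicativity argument for (2), are essentially the paper's. The paper also proves (3) first via the normalized translates $g\mapsto u(hg)/u(h)$, derives (1) from it, and obtains (2) from Lemma~\ref{lem:submultip}. The proof of that lemma is your $F_{n}$/Steinhaus argument, except that it symmetrizes via $\max\{f(g),f(g^{-1})\}$ rather than with symmetric sets. In (4) the paper uses $u_{A}(g)=g\mu(A)/\mu(A)$, whose positivity relies on quasi-invariance of $\mu$. Your shift $P^{\mu}_{\theta}(f+\epsilon)$ makes positivity automatic instead. Both routes are fine.

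On the point you flag: the paper does not address it either. Lemma~\ref{lem:submultip} assumes measurability and is applied to $\mathfrak{m}_{\theta}$ without comment, so you are right that an argument is needed. Your analytic-set route is not clearly workable, though. Parametrizing harmonic functions by the measures $u\,dm_{G}$ encodes the harmonic equation in a Borel way. However, the requirement that $u$ be finite at \emph{every} point is a universally quantified (co-analytic) condition. So the sets $\{\mathfrak{m}_{\theta}>n\}$ come out as projections of co-analytic sets, not of Borel sets, and you do not get analyticity.

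A simpler fix is available: every $u\in\mathbb{H}(G,\theta)$ is lower semicontinuous.
\begin{itemize}
\item Since $u=u\ast\theta^{\ast2}$ pointwise, $u(g)=\int_{G}u(h)\psi(g^{-1}h)\,dm_{G}(h)$, where $\psi(x)=\int_{G}\varphi(k)\varphi(k^{-1}x)\,dm_{G}(k)$ is the pointwise-defined density of $\theta^{\ast2}$.
\item Replacing $\varphi$ by $\min\{\varphi,N\}\mathbf{1}_{C_{N}}$, for a compact exhaustion $(C_{N})$, yields continuous functions increasing to $\psi$. Hence $\psi$ is lower semicontinuous.
\item Fatou's lemma then gives $u(g)\leq\liminf_{n}u(g_{n})$ whenever $g_{n}\to g$.
\end{itemize}
Consequently $\mathfrak{m}_{\theta}$, a supremum of lower semicontinuous functions, is lower semicontinuous. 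Your sets $F_{n}$ are therefore closed, and the Steinhaus (or Baire) argument goes through as you wrote it.
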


Before proving Proposition~\ref{prop:mtheta}, we note that local boundedness is automatic from submultiplicativity:

\begin{lem}\label{lem:submultip}
Every submultiplicative measurable nonnegative function on an lcsc group is locally bounded.
\end{lem}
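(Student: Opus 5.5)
The argument is the classical Baire/Steinhaus one for submultiplicative functions. Let \(f:G\to[0,+\infty)\) be measurable with \(f(gh)\leq f(g)f(h)\) for all \(g,h\in G\). The function in Proposition~\ref{prop:mtheta} is allowed to take the value \(+\infty\), but in item (2) it is finite-valued, so a finite-valued \(f\) is enough. Fix a compact set \(K\subset G\); the goal is to show \(\sup_{K}f<+\infty\).

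\emph{Step 1: a sublevel set of positive measure.} Set \(A_{n}:=\{g\in G:f(g)\leq n\}\). These are Borel sets increasing to \(G\). Hence \(m_{G}(A_{n})>0\) for some \(n\), and by inner regularity we can pick a compact \(C\subseteq A_{n}\) with \(m_{G}(C)>0\).

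\emph{Step 2: cover \(K\) by finitely many products.} The main tool is the Steinhaus-type fact that \(g\mapsto m_{G}(gC\cap C)\) is continuous. This follows from \(L^{1}\)-continuity of translation applied to \(\mathbf{1}_{C}\), as used in Section~\ref{sct:fund}. The function equals \(m_{G}(C)>0\) at \(e_{G}\). Therefore there is an open identity neighborhood \(V\) such that \(gC\cap C\neq\emptyset\) for every \(g\in V\). For such \(g\) we can write \(gc_{1}=c_{2}\) with \(c_{1},c_{2}\in C\), so \(g=c_{2}c_{1}^{-1}\) and \(V\subseteq CC^{-1}\).

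Compactness now gives \(K\subseteq\bigcup_{i=1}^{r}k_{i}V\subseteq\bigcup_{i}k_{i}CC^{-1}\) for finitely many \(k_{i}\in K\).

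\emph{Step 3: bound \(f\) on \(CC^{-1}\) and conclude.} Submultiplicativity gives \(f(c_{2}c_{1}^{-1})\leq f(c_{2})f(c_{1}^{-1})\leq n\,f(c_{1}^{-1})\). So the only remaining issue is to bound \(f\) on \(C^{-1}\).

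\emph{Main obstacle: control of \(f\) on \(C^{-1}\).} The way around it is to choose \(C\) inside a set where both \(f\) and \(f\circ\mathrm{inv}\) are bounded. Take \(B_{n}:=\{g:f(g)\leq n,\ f(g^{-1})\leq n\}\). Inversion is a homeomorphism that preserves Haar-null sets; it maps \(m_{G}\) to the right Haar measure, which is equivalent to \(m_{G}\). The sets \(B_{n}\) are Borel and increase to \(G\), so some \(B_{n}\) has positive measure. Choose the compact \(C\subseteq B_{n}\) from this set instead of \(A_{n}\) in Step 1.

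Then \(f\leq n^{2}\) on \(CC^{-1}\), and for any \(g=k_{i}v\in K\) with \(v\in V\subseteq CC^{-1}\),
\[f(g)\leq f(k_{i})\,f(v)\leq n^{2}\max_{i}f(k_{i}).\]
This bound is finite, since \(f\) is finite-valued and there are only finitely many \(k_{i}\). Hence \(\sup_{K}f<+\infty\), and \(f\) is locally bounded.
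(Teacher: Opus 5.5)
Your proof is correct and follows the paper's argument. Your sets \(B_{n}\) are exactly the sublevel sets of the paper's symmetrization \(\widetilde{f}(g)=\max\{f(g),f(g^{-1})\}\), and your continuity argument for \(g\mapsto m_{G}(gC\cap C)\) is an in-line proof of the Steinhaus--Weil theorem that the paper cites to find an identity neighborhood inside \(EE^{-1}\); the finite covering by translates \(k_{i}V\) and the resulting bound \(n^{2}\max_{i}f(k_{i})\) match the paper's conclusion.
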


\begin{proof}[Proof of Lemma~\ref{lem:submultip}]
Let \(f:G\to\left[0,+\infty\right)\) be submultiplicative measurable. Since the function  \(\widetilde{f}\left(g\right):=\max\left\{f\left(g\right),f\left(g^{-1}\right)\right\}\) is symmetric, while still submultiplicative measurable, and since \(f\left(g\right)\leq\widetilde{f}\left(g\right)\) for all \(g\in G\), by replacing \(f\) with \(\widetilde{f}\) we may assume that \(f\) is symmetric. Fix a compact identity neighborhood \(K\) in \(G\), and find \(c>0\) such that \(E:=K\cap\{f\leq c\}\) has positive Haar measure. By the Steinhaus--Weil theorem (see \cite[Thm.~2]{grosse1989extension}), the set \(EE^{-1}\) contains an identity neighborhood \(V\). We claim that \(f\) is bounded on \(V\); indeed, for \(g\in V\) write \(g=xy^{-1}\) with \(x,y\in E\), and by submultiplicativity
\[f\left(g\right)=f\left(xy^{-1}\right)\leq f\left(x\right)f\left(y^{-1}\right)=f\left(x\right)f\left(y\right)\leq c^{2}.\]
Given any compact set \(C\) in \(G\), there are \(g_{1},\dotsc,g_{M}\in G\) with \(C\subseteq\bigcup_{m=1}^{M}g_{m}V\). Then for every \(g\in C\), write \(g=g_{m^{\prime}}x\) with \(1\leq m^{\prime}\leq M\) and \(x\in V\), and by submultiplicativity
\[f\left(g\right)\leq f\left(g_{m^{\prime}}\right)f\left(x\right)\leq\left(\max\nolimits_{1\leq m\leq M}f\left(g_{m}\right)\right)\cdot c^{2}.\qedhere\]
\end{proof}

\begin{proof}[Proof of Proposition~\ref{prop:mtheta}]
We start by proving (3). Note that for every \(u\in\mathbb{H}\left(G,\theta\right)\) and \(h\in G\), the function \(g\mapsto u\left(hg\right)/u\left(h\right)\) belongs to \(\mathbb{H}\left(G,\theta\right)\), and therefore \(u\left(hg\right)/u\left(h\right)\leq\mathfrak{m}_{\theta}\left(g\right)\). Applying the same inequality with \(hg\) in place of \(h\) and \(g^{-1}\) in place of \(g\), we obtain that also \(\mathfrak{m}_{\theta}\left(g^{-1}\right)^{-1}\leq u\left(hg\right)/u\left(h\right)\). We now deduce (1). Fix \(h,g\in G\), and by part (3) we have just proved, for every \(u\in\mathbb{H}\left(G,\theta\right)\) we have
\[u\left(hg\right)=u\left(h\right)\cdot\tfrac{u\left(hg\right)}{u\left(h\right)}\leq\mathfrak{m}_{\theta}\left(h\right)\cdot\mathfrak{m}_{\theta}\left(g\right).\]
Taking the supremum over \(u\in\mathbb{H}\left(G,\theta\right)\) gives \(\mathfrak{m}_{\theta}\left(hg\right)\leq\mathfrak{m}_{\theta}\left(h\right)\cdot\mathfrak{m}_{\theta}\left(g\right)\). Now (2) follows by Lemma~\ref{lem:submultip}. We finally prove (4). For a measurable set \(A\subseteq X\) with \(\mu\left(A\right)>0\), let \(u_{A}\left(g\right):=\tfrac{1}{\mu\left(A\right)}\cdot g\mu\left(A\right)\in\mathbb{H}\left(G,\theta\right)\). Then from \(u_{A}\left(g\right)\leq\mathfrak{m}_{\theta}\left(g\right)\) we have
\[g\mu\left(A\right)=\mu\left(g^{-1}.A\right)=u_{A}\left(g\right)\cdot\mu\left(A\right)\leq\mathfrak{m}_{\theta}\left(g\right)\cdot\mu\left(A\right).\]
Since \(A\) is arbitrary, \(\tfrac{dg\mu}{d\mu}\left(x\right)\leq\mathfrak{m}_{\theta}\left(g\right)\) for \(\mu\)-a.e. \(x\in X\). Applying the same bound with \(g^{-1}\) in place of \(g\) and \(g.A\) in place of \(A\), we obtain
\[\mu\left(A\right)=g^{-1}\mu\left(g.A\right)\leq\mathfrak{m}_{\theta}\left(g^{-1}\right)\cdot\mu\left(g.A\right).\]
Then in the same way we obtain that \(\tfrac{dg\mu}{d\mu}\left(x\right)\geq\mathfrak{m}_{\theta}\left(g^{-1}\right)^{-1}\) for \(\mu\)-a.e. \(x\in X\).
\end{proof}

\subsection{A version of Harnack's inequality}

In the following we give a general sufficient condition for Harnack's inequality, and deduce the first part of Theorem~\ref{mthm:Harnack}. The key property is the following:

\begin{defn}
An element \(g\in G\) is called {\bf \(\theta\)-suitable}, if there are integers \(R\geq 1\) and \(k,k_{1},\dotsc,k_{R}\geq 0\), as well as constants \(\alpha_{1},\dotsc,\alpha_{R}>0\), such that
\[g\theta^{\ast k}\leq\sum\nolimits_{r=1}^{R}\alpha_{r}\cdot\theta^{\ast k_{r}},\]
as pointwise inequality of finite measures. Here, \(\theta^{\ast 0}=\delta_{\{e_{G}\}}\) denotes the Dirac measures on \(e_{G}\).
\end{defn}

\begin{lem}\label{lem:suitableHarnack}
For every \(\theta\)-suitable element \(g\in G\) one has \(\mathfrak{m}_{\theta}\left(g\right)<+\infty\). More explicitly, if \(g\in G\) is \(\theta\)-suitable with constants \(\alpha_{1},\dotsc,\alpha_{R}\), then
\[\mathfrak{m}_{\theta}\left(g\right)\leq\sum\nolimits_{r=1}^{R}\alpha_{r}.\]
\end{lem}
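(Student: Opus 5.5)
Fix \(u\in\mathbb{H}\left(G,\theta\right)\). The plan is to iterate harmonicity and then integrate \(u\) against both sides of the suitability inequality. Since \(u\ast\theta=u\) and \(u>0\), induction gives \(u\ast\theta^{\ast n}=u\) for every \(n\geq 0\). The case \(n=0\) is trivial with \(\theta^{\ast 0}=\delta_{\{e_{G}\}}\). For the inductive step we use Tonelli, which is legitimate because \(u\) is positive, so all integrals make sense in \([0,+\infty]\):
\[u\ast\theta^{\ast(n+1)}(g)=\int_{G}\int_{G}u(ghk)\,d\theta(k)\,d\theta^{\ast n}(h)=\int_{G}u(gh)\,d\theta^{\ast n}(h)=u(g).\]
Here we use \(\theta^{\ast(n+1)}=\theta^{\ast n}\ast\theta\) in the sense of the paper's inductive definition, i.e.\ the pushforward of \(\theta^{\ast n}\otimes\theta\) under multiplication. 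In particular \(\int_{G}u(h)\,d\theta^{\ast n}(h)=u(e_{G})=1\) for every \(n\geq 0\).

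Now suppose \(g\theta^{\ast k}\leq\sum_{r=1}^{R}\alpha_{r}\theta^{\ast k_{r}}\). Since \(u\geq 0\) is measurable, an inequality of measures integrates against \(u\). This is immediate for simple functions and extends by monotone convergence. Hence
\[\int_{G}u\,d\big(g\theta^{\ast k}\big)\leq\sum_{r=1}^{R}\alpha_{r}\int_{G}u\,d\theta^{\ast k_{r}}=\sum_{r=1}^{R}\alpha_{r}.\]

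The left-hand side is \(\int_{G}u(gh)\,d\theta^{\ast k}(h)=u\ast\theta^{\ast k}(g)=u(g)\), by the first step. Therefore \(u(g)\leq\sum_{r}\alpha_{r}\). Taking the supremum over \(u\in\mathbb{H}\left(G,\theta\right)\) gives \(\mathfrak{m}_{\theta}(g)\leq\sum_{r}\alpha_{r}<+\infty\).

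The only delicate point is that harmonic functions in \(\mathbb{H}\left(G,\theta\right)\) need not be bounded or continuous, and the identity \(u\ast\theta^{\ast n}=u\) must hold pointwise at \(g\), not just almost everywhere. This is handled by the fact that harmonicity \(u\ast\theta=u\) is assumed to hold at every point. Tonelli then applies pointwise at every point for nonnegative functions, so no continuity or integrability assumptions are needed.
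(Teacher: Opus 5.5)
Your proof is correct and follows the paper's argument: integrate \(u\in\mathbb{H}\left(G,\theta\right)\) against both sides of \(g\theta^{\ast k}\leq\sum_{r}\alpha_{r}\theta^{\ast k_{r}}\), use \(\int_{G}u(gh)\,d\theta^{\ast n}(h)=u(g)\) for all \(n\geq 0\), and take the supremum over \(u\). The only addition is that you justify the pointwise iteration \(u\ast\theta^{\ast n}=u\) via Tonelli, which the paper uses without comment.
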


\begin{proof}[Proof of Lemma~\ref{lem:suitableHarnack}]
Let \(g\in G\) be \(\theta\)-suitable with \(k,k_{1},\dotsc,k_{R},\alpha_{1},\dotsc,\alpha_{R}\). Then for all \(u\in\mathbb{H}\left(G,\theta\right)\),
\[u\left(g\right)=\int_{G}u\left(gh\right)d\theta^{\ast k}\left(h\right)\leq\sum\nolimits_{r=1}^{R}\alpha_{r}\cdot\Big(\int_{G}u\left(h\right)d\theta^{\ast k_{r}}\left(h\right)\Big)=\sum\nolimits_{r=1}^{R}\alpha_{r}\cdot u\left(e_{G}\right)=\sum\nolimits_{r=1}^{R}\alpha_{r}.\qedhere\]
\end{proof}

The simplest case where all elements of \(G\) are \(\theta\)-suitable is in discrete countable groups (cf.~\cite[\S12.4]{lalley2023}):

\begin{exm}
If \(\left(G,\theta\right)\) is a discrete measured group, then for every \(g\in G\) there is \(n_{g}\geq 0\) with \(\theta^{\ast n_{g}}\left(g\right)>0\) by admissibility. Therefore, \(g\) becomes \(\theta\)-suitable by taking \(k=0\), \(R=1\), \(k_{1}=n_{g}\), \(\alpha_{1}=1/\theta^{\ast n_{g}}\left(g\right)\).
\end{exm}

Convolution semigroups on Lie groups is well-studied with delicate (elliptic, parabolic) Harnack's inequalities \cite{Varopoulos1992}. Still it is useful to look how the \(\theta\)-suitability condition arises naturally in such situations:

\begin{exm}
Let \(G\) be an lcsc group and let \(\left(\theta_{t}\right)_{t\geq0}\) be a convolution semigroup of probability measures,
\[\theta_{s}\ast\theta_{t}=\theta_{s+t},\quad 0<s<t,\]
with densities \(\varphi_{t}=d\theta_{t}/dm_{G}\). Assume that for every \(g\in G\) and \(s<t\) there is \(\alpha=\alpha\left(g,s,t\right)>0\) with
\[g\varphi_{s}\leq\alpha\cdot\varphi_{t}.\]
Then for every \(s>0\), every \(g\in G\) is \(\theta_{s}\)-suitable: taking \(t=2s\) gives \(g\varphi_{s}\leq\alpha\varphi_{2s}=\alpha\cdot\varphi_{s}^{\ast2}\), which implies \(g\theta_{s}\leq\alpha\cdot\theta_{s}^{\ast2}\).
Then the \(\theta_{s}\)-suitability condition holds with \(k=1\), \(R=1\), \(\alpha_{1}=\alpha\left(g,s,2s\right)\), \(k_{1}=2\).

A basic instance is \(G=\mathbb{R}^{d}\) with drifted Gaussian convolution semigroup \(\left(\theta_{t}\right)_{t\geq 0}\) with densities
\[\gamma_{t}\left(v\right):=\tfrac{1}{\left(4\pi t\right)^{d/2}}\cdot\exp\big(-\tfrac{\left\|v-tb\right\|^{2}}{4t}\big),\quad t>0,\]
for some \(0\neq b\in\mathbb{R}^{d}\). For every \(g\in\mathbb{R}^{d}\) and \(0<s<t\) one has
\[\tfrac{\gamma_{s}\left(v-g\right)}{\gamma_{t}\left(v\right)}=\left(t/s\right)^{d/2}\cdot\exp\left(-\tfrac{\left\|v-g-sb\right\|^{2}}{4s}+\tfrac{\left\|v-tb\right\|^{2}}{4t}\right),\]
and the exponent is bounded from above by \(\left\|g-\left(t-s\right)b\right\|^{2}/4\left(t-s\right)\). Therefore,
\[g\gamma_{s}\left(v\right)=\gamma_{s}\left(v-g\right)\leq \alpha\left(g,s,t\right)\cdot\gamma_{t}\left(v\right),\]
where
\[\alpha\left(g,s,t\right)=\left(t/s\right)^{d/2}\cdot\exp\left(\tfrac{\left\|g-\left(t-s\right)b\right\|^{2}}{4\left(t-s\right)}\right).\]
\end{exm}

\begin{proof}[Proof of Theorem~\ref{mthm:Harnack} (first part)]
Using Lemma~\ref{lem:suitableHarnack}, it suffices to show that every element of \(G\) is \(\theta\)-suitable. By the assumption, there is \(1<p\leq+\infty\) so that \(\varphi:=d\theta/dm_{G}\in L_{\mathrm{c}}^{p}\left(G\right)\). If \(p=+\infty\), then \(\varphi\in L^{2}_{\mathrm{c}}\left(G\right)\) and hence \(\varphi^{\ast 2}\in C_{\mathrm{c}}\left(G\right)\). If \(1<p<\infty\), then by iterating Young's convolution inequality  \cite[Lemma~2.1]{klein1978sharp} (cf. \cite[Cor.~4.4]{benoist2016convolution}) there is \(n_{p}\geq 1\) such that \(\varphi^{\ast n_{p}}\in L^{2}\left(G\right)\). Since supports remain compact under convolution, we have \(\varphi^{\ast n_{p}}\in L^{2}_{\mathrm{c}}\left(G\right)\), and hence \(\varphi^{\ast\left(2n_{p}\right)}=\varphi^{\ast n_{p}}\ast\varphi^{\ast n_{p}}\in C_{\mathrm{c}}\left(G\right)\). Set \(N_{p}:=2\) if \(p=\infty\) and \(N_{p}:=2n_{p}\) if \(1<p<\infty\), we obtain \(\varphi^{\ast N_{p}}\in C_{\mathrm{c}}\left(G\right)\). Let then the open set \(O:=\left\{\varphi^{\ast N_{p}}>0\right\}\) and the compact set \(K:=\mathrm{supp}\left(\theta^{\ast N_{p}}\right)=\overline{O}\). We now claim that for every \(x\in G\) there is \(n_{x}\geq 1\) and a neighborhood \(V_{x}\ni x\), such that
\begin{equation}\label{eq:posit}
c_{x}:=\inf\varphi^{\ast\left(n_{x}+N_{p}\right)}\mid_{V_{x}}>0.
\end{equation}
Indeed, since \(xO\) is open, by the admissibility of \(\theta\) there is \(n_{x}\geq 1\) such that \(\theta^{\ast n_{x}}\left(xO\right)>0\). Using that \(\varphi^{\ast N_{p}}\left(x^{-1}h\right)>0\) for \(h\in xO\), we get
\[\varphi^{\ast\left(n_{x}+N_{p}\right)}\left(x\right)=\int_{G}\varphi^{\ast N_{p}}\left(x^{-1}h\right)d\theta^{\ast n_{x}}\left(h\right)>0.\]
Since \(\varphi^{\ast\left(n_{x}+N_{p}\right)}=\varphi^{\ast n_{x}}\ast\varphi^{\ast N_{p}}\) is continuous, \eqref{eq:posit} follows. Fix \(g\in G\). By the compactness of \(gK\), there are \(x_{1},\dotsc,x_{R}\in gK\) so that the corresponding \(V_{x_{1}},\dotsc,V_{x_{R}}\) cover \(gK\). Then for every \(y\in G\), either \(y\notin gK\) and then
\[g\varphi^{\ast N_{p}}\left(y\right)=\varphi^{\ast N_{p}}\left(g^{-1}y\right)=0,\]
or otherwise \(y\in gK\), say \(y\in V_{x_{r}}\) for some \(1\leq r\leq R\), and then
\[g\varphi^{\ast N_{p}}\left(y\right)\leq \left\Vert\varphi^{\ast N_{p}}\right\Vert_{L^{\infty}\left(G\right)}
\leq\left\Vert\varphi^{\ast N_{p}}\right\Vert_{L^{\infty}\left(G\right)}\cdot c_{x_{r}}^{-1}\cdot\varphi^{\ast\left(n_{x_{r}}+N_{p}\right)}\left(y\right),\]
with \(c_{x_{r}}\) defined by \eqref{eq:posit}. Letting \(\alpha_{r}:=\left\Vert\varphi^{\ast N_{p}}\right\Vert_{L^{\infty}\left(G\right)}\cdot c_{x_{r}}^{-1}\) for \(1\leq r\leq R\), we found that for every \(y\in G\),
\[g\varphi^{\ast N_{p}}\left(y\right)\leq \sum\nolimits_{r=1}^{R}\alpha_{r}\cdot \varphi^{\ast\left(n_{x_{r}}+N_{p}\right)}\left(y\right),\]
which implies the inequality of measures
\[g\theta^{\ast N_{p}}\leq \sum\nolimits_{r=1}^{R}\alpha_{r}\cdot \theta^{\ast\left(n_{x_{r}}+N_{p}\right)}.\]
Thus, \(g\) is \(\theta\)-suitable with \(k=N_{p}\), \(k_{r}=n_{x_{r}}+N_{p}\) for \(1\leq r\leq R\), and the above \(\alpha_{1},\dotsc,\alpha_{R}\).
\end{proof}

\subsection{The failure of Harnack's inequality: A non-SAT\(^{\ast}\) Poisson boundary}

The classical (planar) Harnack inequality \cite[\S19]{harnack1887grundlagen} was formulated for local operators, which in the setting of lcsc groups corresponds to compactly supported measures. For nonlocal operators, Harnack's classical inequality holds only under extra assumptions, and its general failure was demonstrated by Kassmann \cite{Kassmann2007,Kassmann2011}.

\smallskip

In Kassmann’s counterexample, the obstruction arises from nonlocal influence of the complement of the domain on which the Harnack's inequality is studied, and the contribution of the negative part of the harmonic function outside the domain. We use a fundamentally different mechanism for the failure of Harnack's inequality for measured groups, and we thus can show that it fails already for everywhere positive harmonic functions.

\smallskip

Recall the {\bf SAT} property (Strongly Approximately Transitive) defined by Jaworski \cite{jaworski1995strong} for a nonsingular \(G\)-space \(\left(X,\mu\right)\), by which for every Borel set \(A\) in \(X\) with \(\mu\left(A\right)>0\), one has \(\sup_{g\in G}g\mu\left(A\right)=1\). Every Poisson boundary, and in fact every boundary action (so called \emph{proximal}), has the SAT property; this was proved by Jaworski for discrete countable groups, and by Furstenberg--Glasner for lcsc groups \cite[\S8]{furstenberg2010stationary}. A slightly stronger version of the SAT property was introduced by Kaimanovich \cite{Kaimanovich2002SAT} under the name {\bf SAT\(^{\ast}\)} property, by which one requires also that \(\big\Vert\frac{dg\mu}{d\mu}\big\Vert_{L^{\infty}\left(\mu\right)}<+\infty\) for all \(g\in G\).

\smallskip

In the following we prove the second part of Theorem~\ref{mthm:Harnack}, by constructing a random walk on the real affine group, whose Poisson boundary fails to be SAT\(^{\ast}\). This will be done by choosing a density of the increments distribution that oscillates locally so it is not uniformly comparable to its translates, which results in a Radon--Nikodym cocycle unbounded near the origin.

\smallskip

Fix the group of affine transformations of \(\mathbb{R}\) with the operation of composition:
\[G:=\mathrm{Aff}\left(\mathbb{R}\right)
=\left\{x\mapsto ax+b:a\in\mathbb{R}_{>0},\ b\in\mathbb{R}\right\}.\]
The standard left Haar measure of \(G\) is given by
\[dm_{G}\left(a,b\right)=a^{-2}dadb,\]
where the pair \(\left(a,b\right)\) stands for \(x\mapsto ax+b\), and \(da\) and \(db\) stand for the usual Lebesgue measure.

\begin{thm}\label{thm:cntexm}
There exists an admissible probability measure \(\theta\) on \(\mathrm{Aff}\left(\mathbb{R}\right)\) with Poisson boundary \(\left(\mathbb{R},\mu\right)\), such that for every element \(g_{t}:=\left(x\mapsto x+t\right)\in\mathrm{Aff}\left(\mathbb{R}\right)\) with \(t\neq0\), one has
\[\big\Vert\tfrac{dg_{t}\mu}{d\mu}\big\Vert_{L^{\infty}\left(\mathbb{R}\right)}=+\infty.\]
Therefore, this Poisson boundary is SAT but not SAT\(^{\ast}\). In particular, \(\left(\mathrm{Aff}\left(\mathbb{R}\right),\theta\right)\) does not satisfy Harnack's inequality since \(\mathfrak{m}_{\theta}\left(g_{t}\right)=+\infty\) for every \(t\neq0\).
\end{thm}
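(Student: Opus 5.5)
We take \(\theta\) to be a product measure \(\theta=\lambda\otimes\beta\) in the coordinates \((a,b)\). The scale part \(\lambda\) is a fixed smooth compactly supported distribution of \(a\) with \(\mathbb{E}_{\lambda}[\log a]<0\) (contracting), for instance uniform on \([1/4,2]\). The translation part \(\beta\) has a density \(\psi\) on \(\mathbb{R}\) with \(\int|b|d\beta<\infty\), chosen to oscillate wildly: \(\psi\) vanishes on a dense union of very short intervals and is large on nearby intervals. The idea is that \(\psi(\cdot-t)/\psi\) is essentially unbounded for every \(t\neq0\).

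Admissibility is immediate, since \(\theta\) is absolutely continuous and its support contains an open set generating \(G\). By the standard theory of random affine recursions (Furstenberg, Kaimanovich, Brofferio), a contracting affine random walk with a first moment has a unique stationary measure \(\mu\) on \(\mathbb{R}\). The random product \(z_{n}.0\) converges a.s. to \(\xi=\sum_{n\ge0}a_{0}\cdots a_{n-1}b_{n}\), with law \(\mu\), and \((\mathbb{R},\mu)\) is the Poisson boundary. We take this identification as known and cite it; the SAT property then follows from the Furstenberg--Glasner result quoted above.

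The key structural step is stationarity in the form \(\xi=a_{0}\xi'+b_{0}\), where \(\xi'\sim\mu\) is independent of \((a_{0},b_{0})\). Writing \(\eta=a_{0}\xi'\), whose law \(\kappa\) is a smooth-ish density independent of \(\beta\), we get \(\mu=\kappa\ast\beta\) and \(g_{t}\mu=\kappa\ast(\beta(\cdot-t))\). So we must arrange that for every \(t\neq0\) and every \(M\), some interval \(I\) satisfies
\[\int_{I}\kappa\ast\psi(\cdot-t)\,dx > M\int_{I}\kappa\ast\psi\,dx.\]
Because \(\kappa\) smooths \(\psi\), the oscillation of \(\psi\) alone does not suffice. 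We need \(\mu\)-mass near some point to be tiny compared with its \(t\)-shift. This is the main obstacle.

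To handle it, we would make \(\psi\) concentrate: \(\psi=\sum_{k}p_{k}\rho_{k}\), with \(\rho_{k}\) bumps of rapidly shrinking width around points \(c_{k}\), and masses \(p_{k}\) decaying superexponentially, placed far apart. We also choose the scale law so that \(a\) has a density near \(0\) allowing tails that decay superexponentially as well. Then \(\mu\) near a point \(c_{k}+t\) is governed mostly by the term \(p_{k}\), whereas near \(c_{k}\) a term from the \(t\)-shift dominates. More precisely, we arrange that the points \(\{c_{k}\}\) together with their relevant differences realize every rational \(t\), and extend to all \(t\neq0\) by a diagonal enumeration with tolerances. Estimating \(\mu(I)\) for intervals \(I\) far from the bulk reduces to bounding the contribution of \(\kappa\)'s tail, which we control because \(\eta=a_{0}\xi'\) has explicit tail bounds. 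Plausibly, a cleaner route is to choose \(\lambda\) with compact support in \((0,1)\) so that \(|\eta|\) is bounded by some \(C\). Then \(\mu\) on a window far away equals exactly \(\kappa\ast\beta\) restricted to nearby atoms, and with \(c_{k}\) and \(c_{k}+t_{k}\) spaced more than \(2C\) apart from all other bumps, the ratio \(\mu(I+t)/\mu(I)\approx p_{k}'/p_{k}\) can be made arbitrarily large. Here \(p_{k}'\) denotes the mass of an auxiliary bump at \(c_{k}+t\). For fixed \(t\), infinitely many \(k\) with \(t_{k}\to t\) and ratio \(\to\infty\) give an unbounded Radon--Nikodym derivative, since \(\mu(I+t_{k})\) is close to \(\mu(I+t)\) for windows of width \(\gg|t-t_{k}|\), by continuity. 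This gives the claim \(\Vert dg_{t}\mu/d\mu\Vert_{L^{\infty}}=+\infty\).

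Finally, the failure of SAT\(^{\ast}\) is then the definition. Moreover \(\mathfrak{m}_{\theta}(g_{t})=+\infty\) follows from Proposition~\ref{prop:mtheta}(4): finiteness would bound \(dg_{t}\mu/d\mu\) by \(\mathfrak{m}_{\theta}(g_{t})\). The first moment condition \(\sum_{k}p_{k}|c_{k}|<\infty\) must be checked against the rapid growth of the positions \(c_{k}\), which is fine because the \(p_{k}\) may decay as fast as we like.
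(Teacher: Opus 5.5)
Your reduction agrees with the paper's. Writing $\xi=a_{0}\xi'+b_{0}$ gives $\mu=\kappa\ast\beta$ with $\kappa=\mathrm{Law}(a_{0}\xi')$, which is the paper's $R_{\infty}=B_{1}+U$. The criterion $g_{t}\mu(I)>M\mu(I)$ is the right one, and your final paragraph (SAT via Furstenberg--Glasner, $\mathfrak{m}_{\theta}(g_{t})=+\infty$ via Proposition~\ref{prop:mtheta}(4)) is fine. However, the construction of $\beta$ has a genuine gap: the smoothing obstacle you identify yourself is never overcome.

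\emph{The ``cleaner route'' is not available.} If $\mathrm{supp}\,\lambda$ is a compact subset of $(0,1)$, every product of elements of $\mathrm{supp}\,\theta$ has $a$-coordinate at most $\max\mathrm{supp}\,\lambda<1$. So $\theta$ is not admissible. Moreover, $|\eta|\leq C$ is impossible in any case. Since $\mathrm{supp}\,\mu=\overline{\mathrm{supp}\,\kappa+\mathrm{supp}\,\beta}$, the support of $\mu$ is unbounded once your bumps $c_{k}$ escape to infinity, and hence so is $\mathrm{supp}\,\kappa=\overline{\mathrm{supp}\,\lambda\cdot\mathrm{supp}\,\mu}$. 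For admissible $\theta$ one even has $\mu\sim\mathrm{Lebesgue}$, because $\mu$ is quasi-invariant under all translations. So no window ever sees only ``nearby atoms''.

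\emph{Even granting isolation, the mechanism fails.} Near a pair of bumps at $c_{k}$ and $c_{k}+t_{k}$ you get $\mu\approx p_{k}\kappa(\cdot-c_{k})+p_{k}'\kappa(\cdot-c_{k}-t_{k})$. Since $\kappa$ has full support, the heavy bump $p_{k}'$ contributes to both $\mu(I)$ and $\mu(I+t_{k})$. The ratio $\mu(I+t_{k})/\mu(I)$ is then controlled by translation ratios of $\kappa$, such as $\kappa(I-c_{k})/\kappa(I-c_{k}-t_{k})$, and not by $p_{k}'/p_{k}$. Concentrating $\beta$ therefore produces an unbounded Radon--Nikodym derivative only if $\kappa$ already has unbounded translation ratios, which is circular.

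\emph{Tails cannot rescue this.} Admissibility forces $\lambda((1,\infty))>0$, and then $\xi=a_{0}\xi'+b_{0}$ gives $\mu$ and $\kappa$ tails that decay at most polynomially. So the cross-talk from the bulk of $\beta$ into a window near $c_{k}$ is not superexponentially small, and it swamps tiny masses $p_{k}$.

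\emph{The passage to all $t$ is also incomplete.} Going from rational $t_{k}$ to arbitrary $t$ needs a multiplicative bound $\mu(I+t)\geq c\,\mu(I+t_{k})$. Continuity does not supply this for rapidly shrinking bumps.

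The missing idea is to make the density of $\mu$ itself blow up at a single point while staying locally bounded elsewhere. The paper does this by putting singularities into both factors of $\rho_{\infty}=f_{B}\ast\rho_{U}$:
\begin{itemize}
\item It takes $f_{A}(a)\asymp a^{\beta-1}$ near $0$, plus mass on a neighbourhood of $1$ for admissibility. This forces $\rho_{U}(y)\geq c\,y^{\beta-1}$ for small $y>0$, while $\rho_{U}(y)\leq C|y|^{-1}$ everywhere.
\item It takes $f_{B}(b)=c_{B}|b|^{-\alpha}e^{-b^{2}}$ with $\beta<\alpha$.
\end{itemize}
Then $\rho_{\infty}(x)\geq c\,x^{\beta-\alpha}\to+\infty$ as $x\searrow0$, whereas $\rho_{\infty}$ stays bounded on $(t,t+|t|/8)$. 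Hence $\frac{dg_{t}\mu}{d\mu}(t+x)=\rho_{\infty}(x)/\rho_{\infty}(t+x)\to+\infty$ for every $t\neq0$ at once, with no enumeration of the $t$'s.
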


We start by recalling some basics of random walks on \(G=\mathrm{Aff}\left(\mathbb{R}\right)\). Let \(A\) and \(B\) be independent random variables with values in \(\mathbb{R}_{>0}\) and \(\mathbb{R}\), respectively. Define the \(G\)-valued random variable
\[\Theta:=\left(x\mapsto Ax+B\right).\]
The random walk whose (independent) increments distributed as \(\Theta\), takes the form
\[S_{N}:=\big(x\mapsto Q_{N}x+R_{N}\big),\quad N=1,2,3,\dotsc,\]
where for independent two i.i.d sequences \(A_{1},A_{2},\dotsc\sim A\) and \(B_{1},B_{2},\dotsc\sim B\), we write
\[Q_{0}\equiv 1,\quad Q_{N}:=\prod\nolimits_{n=1}^{N}A_{n}\quad\text{and}\quad R_{N}:=\sum\nolimits_{n=1}^{N}Q_{n-1}B_{n},\quad N\geq 1.\]

\begin{lem}\label{lem:ABconv}
Consider the following properties of \(A\) and \(B\):
\begin{equation}\label{eq:ABmom}
-\infty<\mathbb{E}\left[\log A\right]<0\quad\text{and}\quad\mathbb{E}\left[\log^{+}\left|B\right|\right]<+\infty.\qquad(\log^{+}\left|b\right|:=\max\{0,\log\left|b\right|\}).
\end{equation}
Then under \eqref{eq:ABmom}, the limit \(R_{\infty}:=\lim\nolimits_{N\to\infty}R_{N}\) exists and is finite almost surely.
\end{lem}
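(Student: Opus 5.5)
The plan is to show that the series \(R_{\infty}=\sum_{n\geq1}Q_{n-1}B_{n}\) converges absolutely almost surely. We do this by comparing its terms with a geometric series: \(Q_{n-1}\) decays exponentially, while \(|B_{n}|\) grows subexponentially.

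First, the factor \(Q_{n-1}\). Write \(\lambda:=\mathbb{E}\left[\log A\right]\in\left(-\infty,0\right)\). Then \(\log Q_{n-1}=\sum_{k=1}^{n-1}\log A_{k}\) is a sum of i.i.d. integrable random variables. By the strong law of large numbers, \(\frac{1}{n}\log Q_{n-1}\to\lambda\) almost surely. Fix \(\epsilon:=\left|\lambda\right|/3\). Almost surely there is a random \(n_{0}\) such that \(Q_{n-1}\leq e^{\left(\lambda+\epsilon\right)n}\) for all \(n\geq n_{0}\).

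Second, the factor \(B_{n}\). The \(B_{n}\) are identically distributed, so for each \(n\),
\[\mathbb{P}\left(\log^{+}\left|B_{n}\right|>\epsilon n\right)=\mathbb{P}\left(\log^{+}\left|B\right|/\epsilon>n\right).\]
Summing over \(n\) gives \(\sum_{n}\mathbb{P}\left(\log^{+}\left|B\right|/\epsilon>n\right)\leq\mathbb{E}\left[\log^{+}\left|B\right|\right]/\epsilon<+\infty\), using \eqref{eq:ABmom}. By the Borel--Cantelli lemma, almost surely \(\left|B_{n}\right|\leq e^{\epsilon n}\) for all sufficiently large \(n\). This holds because \(\left|B_{n}\right|\leq\max\{1,e^{\log^{+}\left|B_{n}\right|}\}\). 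Note that this step does not need independence of the \(B_{n}\) from the \(A_{n}\); the first Borel--Cantelli lemma suffices.

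Combining the two bounds, almost surely \(\left|Q_{n-1}B_{n}\right|\leq e^{\left(\lambda+2\epsilon\right)n}=e^{-\left|\lambda\right|n/3}\) for all large \(n\). So the tail of the series is dominated by a convergent geometric series. Hence \(\sum_{n}Q_{n-1}B_{n}\) converges absolutely almost surely, and \(R_{\infty}=\lim_{N}R_{N}\) exists and is finite almost surely.

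There is no genuine obstacle in this argument. The only point needing care is that the log-moment condition on \(B\) enters exactly through the summability \(\sum_{n}\mathbb{P}(Y>n)\leq\mathbb{E}[Y]\) for \(Y=\log^{+}\left|B\right|/\epsilon\geq0\). The finiteness \(\mathbb{E}[\log A]>-\infty\) is used to apply the strong law of large numbers; even without it the law of large numbers would still hold with limit \(-\infty\), so this condition is harmless.
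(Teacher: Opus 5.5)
Your proposal is correct and follows essentially the same route as the paper: the strong law of large numbers gives exponential decay of \(Q_{n}\), the bound \(\sum_{n}\mathbb{P}\left(Y>n\right)\leq\mathbb{E}\left[Y\right]\) with Borel--Cantelli gives subexponential growth of \(\left|B_{n}\right|\), and the terms \(Q_{n-1}B_{n}\) are then dominated by a geometric series. The only difference is cosmetic: you fix the explicit margin \(\epsilon=\left|\lambda\right|/3\), whereas the paper uses generic rates \(0<\beta<\alpha\).
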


\begin{proof}[Proof of Lemma~\ref{lem:ABconv}]
By the strong law of large numbers, almost surely
\[\quad \tfrac{1}{n}\log Q_{n}=\tfrac{1}{n}\sum\nolimits_{k=1}^{n}\log A_{k}\xrightarrow[n\to\infty]{}\mathbb{E}\left[\log A\right]<0,\]
so there are \(\alpha>0\) and a random integer \(T_{\alpha}\) such that \(Q_{n}\leq e^{-\alpha n}\) for all \(n\geq T_{\alpha}\). Fix \(0<\beta<\alpha\). Then by the tail integral formula we have
\begin{align*}
\sum\nolimits_{n=1}^{\infty}\mathbb{P}\left(\log^{+}\left|B_{n}\right|>\beta n\right)
&\leq\beta^{-1}\sum\nolimits_{n=1}^{\infty}\int_{\beta\left(n-1\right)}^{\beta n}\mathbb{P}\left(\log^{+}\left|B_{n}\right|>t\right)dt\\
&=\beta^{-1}\int_{0}^{\infty}\mathbb{P}\left(\log^{+}\left|B\right|>t\right)dt=\beta^{-1}\mathbb{E}\left[\log^{+}\left|B\right|\right]<+\infty.
\end{align*}
Therefore, by the Borel--Cantelli lemma there is a random integer \(T_{\beta}\) such that \(\left|B_{n}\right|\leq e^{\beta n}\) for all \(n\geq T_{\beta}\). Set \(T=\max\left\{T_{\alpha},T_{\beta}\right\}\). Then for all \(n\geq T\) we have
\[\quad \left|Q_{n-1}B_{n}\right|\leq e^{-\alpha\left(n-1\right)}e^{\beta n}=e^{\alpha-\left(\alpha-\beta\right)n}.\]
Since \(\alpha>\beta\), it follows that \(\sum\nolimits_{n=1}^{\infty}Q_{n-1}B_{n}\) converges (absolutely) almost surely.
\end{proof}

\begin{lem}\label{lem:ABgen}
Consider the following properties of \(A\) and \(B\):
\begin{equation}\label{eq:ABsupp}
\begin{aligned}
& A \text{ has a density positive on }
\left(1-\delta_{-},1+\delta_{+}\right)
\text{ for some }0<\delta_{-},\delta_{+}<1,\text{ and}\\
& B \text{ has a density positive Lebesgue-a.e. on }\mathbb{R}.
\end{aligned}
\end{equation}
Then under \eqref{eq:ABsupp}, \(\left(G,\theta\right):=\left(\mathrm{Aff}\left(\mathbb{R}\right),\mathrm{Law}\left(\Theta\right)\right)\) forms a measured group.
\end{lem}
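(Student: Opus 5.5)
To prove Lemma~\ref{lem:ABgen}, two things must be checked for \(\theta:=\mathrm{Law}\left(\Theta\right)\): that \(\theta\) is absolutely continuous with respect to \(m_{G}\), and that the closed subsemigroup generated by \(\mathrm{supp}\left(\theta\right)\) is all of \(G\).

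\textbf{Absolute continuity.} Let \(f_{A}\) and \(f_{B}\) denote the densities of \(A\) and \(B\). Since \(A\) and \(B\) are independent, the law of \(\left(A,B\right)\) has density \(f_{A}\left(a\right)f_{B}\left(b\right)\) with respect to \(da\,db\) on \(\mathbb{R}_{>0}\times\mathbb{R}\). Since \(dm_{G}=a^{-2}da\,db\), we get
\[\frac{d\theta}{dm_{G}}\left(a,b\right)=a^{2}f_{A}\left(a\right)f_{B}\left(b\right),\]
so \(\theta\ll m_{G}\).

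\textbf{Support.} Because \(f_{A}>0\) on \(I:=\left(1-\delta_{-},1+\delta_{+}\right)\) and \(f_{B}>0\) Lebesgue-a.e. on \(\mathbb{R}\), the support of \(\theta\) contains the closed set \(\overline{I}\times\mathbb{R}\). Indeed, every open box \(U\times V\) with \(U\subseteq I\) has positive \(\theta\)-measure. So it suffices to show that the semigroup generated by \(S:=I\times\mathbb{R}\) is dense in \(G\). In fact it equals \(G\).

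\textbf{Generation.} Composing two elements gives
\[\left(a_{1},b_{1}\right)\circ\left(a_{2},b_{2}\right)=\left(a_{1}a_{2},a_{1}b_{2}+b_{1}\right).\]
Since the translation part ranges over all of \(\mathbb{R}\) freely, the product of \(n\) elements of \(S\) is
\[\left\{\left(a_{1}\dotsm a_{n},b\right):a_{i}\in I,\,b\in\mathbb{R}\right\}=I^{n}\times\mathbb{R},\]
where \(I^{n}=\left(\left(1-\delta_{-}\right)^{n},\left(1+\delta_{+}\right)^{n}\right)\). To see that every \(b\) is attained, fix the \(a_{i}\); the translation part is then an affine function of \(b_{1}\) with coefficient \(1\), so it is surjective onto \(\mathbb{R}\). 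Since \(0<\delta_{-},\delta_{+}<1\), the intervals \(I^{n}\) exhaust \(\mathbb{R}_{>0}\) as \(n\to\infty\). Hence
\[\bigcup\nolimits_{n}\mathrm{supp}\left(\theta^{\ast n}\right)\supseteq\bigcup\nolimits_{n}S^{n}=G.\]

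The inclusion \(\mathrm{supp}\left(\theta^{\ast n}\right)\supseteq\overline{S^{n}}\) used in the last step follows from the general fact that \(\mathrm{supp}\left(\theta_{1}\ast\theta_{2}\right)=\overline{\mathrm{supp}\left(\theta_{1}\right)\mathrm{supp}\left(\theta_{2}\right)}\), which comes from continuity of multiplication. This gives admissibility, so \(\left(G,\theta\right)\) is a measured group.

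The only step needing any care is the support inclusion \(\overline{I}\times\mathbb{R}\subseteq\mathrm{supp}\left(\theta\right)\), since \(f_{B}\) is positive only almost everywhere. This is handled by integrating the density over open boxes rather than evaluating it pointwise.
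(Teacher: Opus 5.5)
Your proof is correct and follows essentially the same route as the paper: absolute continuity via \(d\theta=a^{2}f_{A}\left(a\right)f_{B}\left(b\right)\,dm_{G}\), and admissibility from the fact that \(\mathrm{supp}\left(\theta\right)\) contains \(I\times\mathbb{R}\) together with the fact that the multiplicative semigroup generated by \(I\) is \(\mathbb{R}_{>0}\). The paper phrases the support step through the elements \(\left(a,0\right)\) and \(\left(1,b\right)\) with \(\left(1,b\right)\left(a,0\right)=\left(a,b\right)\); your computation \(S^{n}=I^{n}\times\mathbb{R}\) and your open-box handling of the almost-everywhere positivity of \(f_{B}\) are a slightly more explicit version of the same argument.
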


\begin{proof}[Proof of Lemma~\ref{lem:ABgen}]
Write \(f_{A}\) and \(f_{B}\) for the densities of \(A\) and \(B\). Since \(A\) and \(B\) are independent,
\[d\theta\left(a,b\right)=f_{A}\left(a\right)f_{B}\left(b\right)dadb.\]
Then with the standard left Haar measure of \(G\) we get
\[d\theta\left(a,b\right)=a^2f_{A}\left(a\right)f_{B}\left(b\right)dm_{G}\left(a,b\right),\text{ hence }\theta\ll m_{G}.\]
Moreover, for \(a\in\mathrm{supp}\left(A\right)\supseteq\left(1-\delta_{-},1+\delta_{+}\right)\) we have \(\left(a,0\right)\in\mathrm{supp}\left(\theta\right)\), and for \(b\in\mathrm{supp}\left(B\right)=\mathbb{R}\) we have \(\left(1,b\right)\in\mathrm{supp}\left(\theta\right)\). Since \(\left(1,b\right)\left(a,0\right)=\left(a,b\right)\) and the multiplicative semigroup generated by
\(\left(1-\delta_{-},1+\delta_{+}\right)\) is \(\mathbb{R}_{>0}\), the closed semigroup generated by \(\mathrm{supp}\left(\theta\right)\) is \(G\).
\end{proof}

In the following we appeal to a theorem of \'{E}lie \cite[Th\'eor\`eme p.~38]{Elie1978}, \cite[Prop.~2.2]{Elie1983} (see \cite[\S3.F]{cartwright1994random}), by which in the contracting regime \(\mathbb{E}\left[\log A\right]<0\), the Poisson boundary of \(\left(G,\theta\right)=\left(\mathrm{Aff}\left(\mathbb{R}\right),\mathrm{Law}\left(\Theta\right)\right)\) is the unique \(\theta\)-stationary measure on \(\mathbb{R}\) (considered with the tautological action of \(G\)).

\begin{lem}\label{lem:Rinfstat}
Let \(A\) and \(B\) be such that \eqref{eq:ABmom} and \eqref{eq:ABsupp} hold. Set \(\mu:=\mathrm{Law}\left(R_{\infty}\right)\) with \(R_{\infty}\) as in Lemma~\ref{lem:ABconv}, and set also \(\Theta=\left(x\mapsto Ax+B\right)\) and \(\theta:=\mathrm{Law}\left(\Theta\right)\). Then the following hold.
\begin{enumerate}
    \item \(\left(\mathbb{R},\mu\right)\) is a stationary \(\left(G,\theta\right)\)-space; hence, by \'{E}lie's theorem it is the Poisson boundary of \(\left(G,\theta\right)\).
    \item \(\mu\) admits a Lebesgue-a.e. positive density \(\rho_{\infty}\), and for every \(g=\left(a,b\right)\in G\) one has
    \[\tfrac{dg\mu}{d\mu}\left(x\right)=\tfrac{1}{a}\cdot\tfrac{\rho_{\infty}\left(a^{-1}\left(x-b\right)\right)}{\rho_{\infty}\left(x\right)}\quad\text{for Lebesgue-a.e. }x\in\mathbb{R}.\]
\end{enumerate}
\end{lem}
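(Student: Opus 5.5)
For part (1), I will use the standard "backward iteration" identity. Write \(R_{\infty}=B_{1}+A_{1}R_{\infty}'\), where \(R_{\infty}':=\sum_{n\geq2}\big(\prod_{k=2}^{n-1}A_{k}\big)B_{n}\). This follows directly from \(R_{N}=B_{1}+A_{1}\sum_{n=2}^{N}\prod_{k=2}^{n-1}A_{k}B_{n}\) by letting \(N\to\infty\), using Lemma~\ref{lem:ABconv}. The shifted sequence is i.i.d.\ with the same law, so \(R_{\infty}'\sim\mu\). Moreover, \(R_{\infty}'\) is independent of \(\left(A_{1},B_{1}\right)\), which has law \(\Theta\sim\theta\). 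Hence \(\mu=\mathrm{Law}\left(\Theta.R_{\infty}'\right)=\theta\ast\mu\), so \(\mu\) is \(\theta\)-stationary. Lemma~\ref{lem:ABgen} shows that \(\left(G,\theta\right)\) is a measured group. Under \eqref{eq:ABmom}, \'{E}lie's theorem then identifies \(\left(\mathbb{R},\mu\right)\) as the Poisson boundary.

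For part (2), I first prove absolute continuity from the same decomposition. Conditionally on \(\left(A_{1},R_{\infty}'\right)\), the variable \(R_{\infty}=B_{1}+A_{1}R_{\infty}'\) is a translate of \(B_{1}\), which is independent of them and has density \(f_{B}\). So \(\mu\) has the density
\[\rho_{\infty}\left(x\right)=\mathbb{E}\left[f_{B}\left(x-A_{1}R_{\infty}'\right)\right]=\int\!\!\int f_{B}\left(x-ay\right)f_{A}\left(a\right)da\,d\mu\left(y\right).\]
By \eqref{eq:ABsupp}, \(f_{B}>0\) Lebesgue-a.e. For a fixed \(x\), the set of \(\left(a,y\right)\) with \(x-ay\) in the null set \(\{f_{B}=0\}\) has zero \(f_{A}\left(a\right)da\otimes\mu\)-measure. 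Indeed, for each fixed \(y\neq0\) the map \(a\mapsto x-ay\) is a diffeomorphism, and for \(y=0\) I apply Fubini in the other order (\(x\) varying), which suffices for an a.e.\ statement in \(x\). It is cleanest to argue everything via Fubini in \(x\). Since \(\int\mathbf{1}_{\{f_{B}=0\}}\left(x-ay\right)dx=0\) for every \(\left(a,y\right)\), we get \(\rho_{\infty}>0\) for Lebesgue-a.e.\ \(x\). This positivity is the only mildly delicate point, and the Fubini argument handles it.

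The Radon--Nikodym formula is then a change of variables. For \(g=\left(a,b\right)\) and Borel \(E\subseteq\mathbb{R}\),
\[g\mu\left(E\right)=\mu\left(g^{-1}.E\right)=\int\mathbf{1}_{E}\left(ay+b\right)\rho_{\infty}\left(y\right)dy=\int_{E}\tfrac{1}{a}\rho_{\infty}\left(a^{-1}\left(x-b\right)\right)dx.\]
So \(g\mu\) has density \(\tfrac1a\rho_{\infty}\left(a^{-1}\left(x-b\right)\right)\). Dividing by \(\rho_{\infty}\left(x\right)\), which is positive a.e., gives the stated expression for \(dg\mu/d\mu\) Lebesgue-a.e. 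Since \(\mu\) and Lebesgue measure are equivalent, the identity also holds \(\mu\)-a.e.
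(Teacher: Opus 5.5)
Your proof is correct and follows the paper's argument: the decomposition \(R_{\infty}=B_{1}+A_{1}R_{\infty}'\), with \(R_{\infty}'\) independent of \((A_{1},B_{1})\), gives stationarity, and \(\rho_{\infty}\) is \(f_{B}\) convolved with the law of \(A_{1}R_{\infty}'\) (the paper's \(U\)), followed by the same linear change of variables. Your closing Fubini-in-\(x\) argument is the right justification of the a.e.\ positivity of \(\rho_{\infty}\), which the paper only asserts, so you can drop the preceding muddled remark about fixing \(x\) and handling \(y=0\) separately.
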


\begin{proof}[Proof of Lemma~\ref{lem:Rinfstat}]
Let
\[R_{\infty}^{\prime}:=B_{2}+\sum\nolimits_{n=3}^{\infty}\big(\prod\nolimits_{k=2}^{n-1}A_{k}\big)B_{n}.\]
Then \(R_{\infty}^{\prime}\overset{\mathrm d}=R_{\infty}\), and \(R_{\infty}^{\prime}\) is independent of \(A_{1},B_{1}\). Hence
\[R_{\infty}=A_{1}R_{\infty}^{\prime}+B_{1}\overset{\mathrm d}=AR_{\infty}+B=\Theta\left(R_{\infty}\right),\]
and thus \(\mu\) is \(\theta\)-stationary. Now set \(U:=A_{1}R_{\infty}^{\prime}\). Since \(R_{\infty}^{\prime}\) is independent of \(A_{1},B_{1}\), it follows that \(U\) is independent of \(B_{1}\). Let \(f_{B}\) be the (Lebesgue-a.e. positive) density of \(B\), and let \(\nu:=\mathrm{Law}\left(U\right)\). Since \(R_{\infty}=B_{1}+U\), it follows by Fubini's theorem that \(\mu=\mathrm{Law}\left(R_{\infty}\right)\) is absolutely continuous with density
\[\rho_{\infty}\left(x\right)=\int_{\mathbb{R}}f_{B}\left(x-y\right)d\nu\left(y\right)>0.\]
For every test function \(f\in L^{\infty}\left(\mu\right)\), by a linear change of variable one has
\[\int_{\mathbb{R}}f\left(g.x\right)\rho_{\infty}\left(x\right)dx=\int_{\mathbb{R}}f\left(ax+b\right)\rho_{\infty}\left(x\right)dx=\int_{\mathbb{R}}f\left(x\right)\cdot\tfrac{1}{a}\rho_{\infty}\left(a^{-1}\left(x-b\right)\right)dx,\]
and this shows that
\[\tfrac{dg\mu}{d\mathrm{Lebesgue}}\left(x\right)=\tfrac{1}{a}\rho_{\infty}\left(a^{-1}\left(x-b\right)\right).\]
Dividing by \(\rho_{\infty}=d\mu/d\mathrm{Lebesgue}\) gives the desired formula.
\end{proof}

\begin{proof}[Proof of Theorem~\ref{thm:cntexm}]
Fix parameters \(0<\beta<\alpha<1\) and \(0<\delta<1/4\). Let \(A\) and \(B\) be independent random variables defined as follows. First, let \(A\) admit the density with respect to Lebesgue measure
\[f_{A}\left(a\right)=c_{A}\cdot\left(Ma^{\beta-1}\mathbf{1}_{\left(0,1/2\right)}\left(a\right)+\mathbf{1}_{\left(1-\delta,1+\delta\right)}\left(a\right)\right),\]
where \(c_{A}>0\) is a normalizing constant, and \(M>0\) is chosen large enough so that \(\mathbb{E}\left[\log A\right]<0\). Since
\[\int_{0}^{1/2}a^{\beta-1}\left|\log a\right|\,da<\infty,\]
we also have \(\mathbb{E}\left[\log A\right]>-\infty\). Next, let \(B\) admit the density with respect to Lebesgue measure
\[f_{B}\left(b\right)=c_{B}\cdot\left|b\right|^{-\alpha}e^{-b^{2}},\quad b\neq0,\]
where \(c_{B}>0\) is a normalizing constant (ignore the value at \(b=0\)). Then \(f_{B}>0\) Lebesgue-a.e. and \(\mathbb{E}\left[\log^{+}\left|B\right|\right]<\infty\). Therefore \eqref{eq:ABmom} and \eqref{eq:ABsupp} hold. Let \(\Theta=\left(x\mapsto Ax+B\right)\), and we will show that the conclusion of the theorem holds with \(\theta:=\mathrm{Law}\left(\Theta\right)\). Let \(R_{\infty}\) and \(\rho_{\infty}\) be as in Lemma~\ref{lem:Rinfstat}. Set
\[R_{\infty}^{\prime}:=B_{2}+\sum\nolimits_{n=3}^{\infty}\big(\prod\nolimits_{k=2}^{n-1}A_{k}\big)B_{n},\quad U:=A_{1}R_{\infty}^{\prime}.\]
Then \(R_{\infty}^{\prime}\overset{\mathrm d}=R_{\infty}\), \(R_{\infty}^{\prime}\) is independent of \(A_{1},B_{1}\), and
\[R_{\infty}\overset{\mathrm d}=B_{1}+U.\]
Since \(R_{\infty}^{\prime}\) has density \(\rho_{\infty}\) and it is independent of \(A_{1}\), the usual product density formula gives that
\[\rho_{U}\left(y\right)=\int_{\mathbb{R}}\tfrac{1}{\left|x\right|}f_{A}\left(y/x\right)\rho_{\infty}\left(x\right)dx.\]
Since \(R_{\infty}\overset{\mathrm d}=B_{1}+U\) and \(B_{1}\) is independent of \(U\), the density of \(R_{\infty}\) is given by the convolution
\[\rho_{\infty}\left(x\right)=\int_{\mathbb{R}}f_{B}\left(x-y\right)\rho_{U}\left(y\right)dy.\]
We will now use these formulas for \(\rho_{U}\) and \(\rho_{\infty}\) to estimate \(\rho_{\infty}\) near \(0\) and away from \(0\).
\begin{enumerate}
    \item We will show that \(\rho_{\infty}\left(x\right)\to+\infty\) as \(x\ssearrow0\). Since \(\rho_{\infty}>0\) Lebesgue-a.e. we have
    \[c_{0}:=\int_{1}^{2}x^{-\beta}\rho_{\infty}\left(x\right)dx>0.\]
    For every \(1<x<2\), if \(0<y<1/4\) then \(0<y/x<1/2\), hence
    \[\rho_{U}\left(y\right)\ge c_{A}My^{\beta-1}\cdot\int_{1}^{2}x^{-\beta}\rho_{\infty}\left(x\right)dx=c_{0}c_{A}My^{\beta-1}.\]
    For \(0<x<1/8\) and \(0<y<x/2\), one has \(x/2<x-y<x\), and therefore
    \[f_{B}\left(x-y\right)=c_{B}\left(x-y\right)^{-\alpha}e^{-\left(x-y\right)^{2}}\ge c_{B}x^{-\alpha}e^{-1}.\]
    Combining both bounds we obtain
    \[\rho_{\infty}\left(x\right)\ge \int_{0}^{x/2}f_{B}\left(x-y\right)\rho_{U}\left(y\right)dy\geq c_{B}x^{-\alpha}e^{-1}c_{0}c_{A}M\cdot\int_{0}^{x/2}y^{\beta-1}dy=\frac{c_{0}c_{A}c_{B}M}{e\beta 2^{\beta}}\cdot x^{\beta-\alpha}.\]
    Since \(\beta<\alpha\), the desired limit follows.
    \item We will now show that for every \(t\neq 0\),
    \[C_{t}:=\sup_{0<x<\left|t\right|/8}\rho_{\infty}\left(t+x\right)<+\infty.\]  
    For every \(y\neq0\), from the definition of \(f_A\) we may write
    \[\rho_{U}\left(y\right)\leq c_{A}M\left|y\right|^{\beta-1}\cdot\int_{\{0<y/x<1/2\}}\left|x\right|^{-\beta}\rho_{\infty}\left(x\right)dx+c_{A}\cdot\int_{\{1-\delta<y/x<1+\delta\}}\left|x\right|^{-1}\rho_{\infty}\left(x\right)dx.\]
    On these two regions one has \(\left|x\right|\ge 2\left|y\right|\) and \(\left|x\right|\ge \left|y\right|/\left(1+\delta\right)\), respectively, hence
    \[\rho_{U}\left(y\right)\le c_{A}M2^{-\beta}\left|y\right|^{-1}+c_{A}\left(1+\delta\right)\left|y\right|^{-1}.\]
    Now fix \(t\neq0\) and set \(\eta:=\left|t\right|/8\). For every \(0<x<\eta\), split
    \[\rho_{\infty}\left(t+x\right)=\int_{\left|b\right|<\eta}f_{B}\left(b\right)\rho_{U}\left(t+x-b\right)db+\int_{\left|b\right|\ge\eta}f_{B}\left(b\right)\rho_{U}\left(t+x-b\right)db.\]
    For the first term, when \(\left|b\right|<\eta\) we have
    \[\left|t+x-b\right|\ge \left|t\right|-\left|x\right|-\left|b\right|>\left|t\right|/2,\]
    hence
    \[\rho_{U}\left(t+x-b\right)\le 2c_{A}M2^{-\beta}\left|t\right|^{-1}+2c_{A}\left(1+\delta\right)\left|t\right|^{-1}.\]
    Since \(\int_{\mathbb{R}}f_{B}\left(b\right)db=1\), the first term is bounded by the same constant. For the second term, since \(f_{B}\) is bounded on \(\{\left|b\right|\ge\eta\}\) and \(\int_{\mathbb{R}}\rho_{U}\left(y\right)dy=1\), it is bounded by \(\left\|f_{B}\mathbf{1}_{\{\left|b\right|\ge\eta\}}\right\|_{L^{\infty}\left(\mathbb{R}\right)}\).
\end{enumerate}
Finally, let \(t\neq 0\) be arbitrary. Then for \(g_{t}:=\left(1,t\right)\), using the formula in Lemma~\ref{lem:Rinfstat} we obtain
\[\tfrac{dg_{t}\mu}{d\mu}\left(t+x\right)=\tfrac{\rho_{\infty}\left(x\right)}{\rho_{\infty}\left(t+x\right)}\geq\rho_{\infty}\left(x\right)/C_{t}\to+\infty\text{ as }x\ssearrow0,\text{ and thus }\big\Vert\tfrac{dg_{t}\mu}{d\mu}\big\Vert_{L^{\infty}\left(\mathbb{R}\right)}=+\infty\qedhere.\]
\end{proof}

\section{Compact Radon--Nikodym models}\label{sct:compRN}

Topological models of measurable systems are a fundamental tool used widely across measurable dynamics, and in particular in stationary dynamics; see e.g. \cite[\S8]{furstenberg2010stationary}, \cite[\S3]{furstenberg2013recurrence}, \cite[Thm.~2.1]{bader2006factor}. The goal is to view a given measurable dynamical system as an invariant subset of a compact space with continuous dynamics. It is of particular interest, especially in descriptive set theoretic aspects of Borel dynamics, to find for a given group a universal compact metric space in which all topological models of its measurable actions are realized; see e.g. \cite{becker1993borel,becker1996descriptive,Hjorth1999}. The existence of such a universal model for actions of lcsc groups is a classical result due to Mackey \cite[\S2]{mackey1962point} and Varadarajan \cite[Thm.~5.7]{varadarajan1968geometry} (see \cite[Thm.~2.1.19]{zimmer2013ergodic}).

\begin{thm}[Mackey--Varadarajan's Compact model theorem]\label{thm:Vara}
Every lcsc group \(G\) admits a compact \(G\)-space \(\mathbb{Y}=\mathbb{Y}\left(G\right)\) (i.e. a compact metric space with a continuous action of \(G\)), such that every Borel \(G\)-space \(X\) admits a {\bf compact model} in \(\mathbb{Y}\), that is, a \(G\)-equivariant measurable injective map \(\kappa:X\to\mathbb{Y}\).
\end{thm}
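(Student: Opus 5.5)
The plan is the classical route via a separable, $G$-invariant algebra of bounded functions and Gelfand duality. First we pass to a standard model, then we build a universal space from it.

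\emph{Step one: make the action topological on a Polish space.} By the Becker--Kechris theorem (or directly Varadarajan's argument), every Borel $G$-space $X$ is Borel isomorphic, equivariantly, to a Polish $G$-space, that is, one on which $G$ acts continuously.

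\emph{Step two: find a countable $G$-invariant family of functions.} On that Polish space we fix a countable family $\{f_n\}$ of bounded Borel functions, taking values in $[0,1]$, that separates points. We replace each $f_n$ by functions that are uniformly continuous along orbits. Concretely, for $\rho\in C_{\mathrm{c}}(G)$ we set
\[(\rho\ast f)(x)=\int_G\rho(g)f(g^{-1}.x)\,dm_G(g).\]
Then $g\mapsto \rho\ast f(g.x)$ is uniformly continuous, uniformly in $x$, by $L^1$-continuity of translations, which is the same estimate used in Fact~\ref{fct:unifcont}(2) and Lemma~\ref{lem:infs}. Letting $\rho$ range over an approximate identity, these smoothed functions still separate points. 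The $G$-invariant closed unital C$^*$-algebra $\mathcal{A}$ of bounded Borel functions generated by all $\rho\ast f_n$ and their $G$-translates is separable. Separability holds because $G$ is second countable and $g\mapsto g\cdot a$ is norm-continuous on $\mathcal{A}$.

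\emph{Step three: Gelfand duality gives the compact model.} The Gelfand spectrum $Z$ of $\mathcal{A}$ is compact metrizable, and $G$ acts continuously on it. The evaluation map $X\to Z$ is $G$-equivariant and Borel, and it is injective because $\mathcal{A}$ separates points. This already gives a compact model for a single $X$.

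\emph{Step four: universality.} The compact models from step three embed into the single compact $G$-space
\[\mathbb{Y}=C(G,[0,1]^{\mathbb{N}})_{\text{u.c.}},\]
the space of uniformly continuous maps with a suitable compact (equicontinuous, Arzel\`a--Ascoli) structure, with $G$ acting by translation. An alternative choice is the hyperspace of orbit closures. The embedding sends $x$ to the orbit map $g\mapsto (a_n(g.x))_n$. To get compactness we restrict to a fixed modulus of uniform continuity by renormalizing each $a_n$ by its modulus. This is possible because a countable family always admits some common moduli after scaling by $2^{-n}$.

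The main obstacle is this universality step. A single compact metrizable $\mathbb{Y}$ must receive every $X$, which requires choosing moduli of continuity independent of $X$. I would handle it by shrinking each generator by a factor depending only on a fixed countable dense set of $\rho$'s. That choice makes the equicontinuity class uniform, and Arzel\`a--Ascoli on compact subsets of $G$ then gives compactness of $\mathbb{Y}$ in the topology of uniform convergence on compacta.
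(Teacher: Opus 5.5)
\textbf{Where the paper stands.} The paper does not prove this theorem; it quotes it from Mackey, Varadarajan and Zimmer. The standard argument behind those references is essentially your Steps two and three: a separable $G$-invariant algebra of bounded Borel functions on which $G$ acts norm-continuously, followed by its Gelfand spectrum. That argument produces a compact model that depends on $X$. What the statement adds is \emph{universality}, and this is where your proposal has a genuine gap. The idea that closes it is implicit in your Step two, but you never state it.

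\textbf{The gap in Step four.} You justify a common modulus of continuity by rescaling (``common moduli after scaling by $2^{-n}$'', ``shrinking each generator by a factor''). Multiplying a function by a constant does not change the shape of its modulus: no constant $c$ gives $c\sqrt{t}\leq t$ near $0$. So rescaling cannot make $X$-dependent moduli uniform. Weighting by $2^{-n}$ only helps to combine countably many \emph{already fixed} moduli into one. Moreover, if the $a_{n}$ form a dense sequence in $\mathcal{A}$, their moduli really do depend on $X$, because norm limits are approximated in an $X$-dependent way.

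\textbf{The missing observation.} No rescaling is needed. For $a=\rho\ast f$ with $|f|\leq1$, left invariance of $m_{G}$ gives
\[
a(h.x)=\int_{G}\rho(hk)f(k^{-1}.x)\,dm_{G}(k),\qquad |a(h_{1}.x)-a(h_{2}.x)|\leq\omega_{\rho}(h_{1}h_{2}^{-1}):=\int_{G}|\rho(h_{1}h_{2}^{-1}k)-\rho(k)|\,dm_{G}(k).
\]
This modulus depends only on $\rho$, not on $f$, $x$ or $X$.

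\textbf{The repaired construction.}
\begin{enumerate}
    \item Fix once and for all a sequence $(\rho_{k})\subset C_{\mathrm{c}}(G)$ dense in $L^{1}(G)$.
    \item Use only the generators $a_{n,k}=\rho_{k}\ast f_{n}$.
    \item Let $\mathbb{Y}=\prod_{n,k}\mathbb{Y}_{k}$, where $\mathbb{Y}_{k}$ is the set of $F:G\to[-\Vert\rho_{k}\Vert_{L^{1}},\Vert\rho_{k}\Vert_{L^{1}}]$ with $|F(h_{1})-F(h_{2})|\leq\omega_{\rho_{k}}(h_{1}h_{2}^{-1})$. Give it the topology of uniform convergence on compacta; it is compact metrizable by Arzel\`a--Ascoli.
    \item Let $G$ act by right translation $(g.F)(h)=F(hg)$. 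This preserves each $\mathbb{Y}_{k}$ and is jointly continuous.
    \item The map $x\mapsto(h\mapsto a_{n,k}(h.x))_{n,k}$ is then Borel and equivariant.
\end{enumerate}

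\textbf{Separation in Step two also needs repair.} For merely Borel $f_{n}$, $\rho_{k}\ast f_{n}\to f_{n}$ pointwise can fail along an approximate identity. For example, if $f=\mathbf{1}_{\{x_{0}\}}$ and stabilizers are null, then $\rho\ast f\equiv0$. With $(\rho_{k})$ dense in $L^{1}(G)$, however, suppose all the $\rho_{k}\ast f_{n}$ agree at $x$ and $y$. Then $f_{n}(g^{-1}.x)=f_{n}(g^{-1}.y)$ for $m_{G}$-a.e.\ $g$ and all $n$, hence $x=y$, so the map is injective.

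\textbf{What becomes unnecessary.} Once Step four is done this way, Steps one and three are superfluous. A shorter route avoids moduli altogether. Take $\mathbb{Y}$ to be the countable power of $\{F\in L^{\infty}(G):0\leq F\leq1\}$ with the weak-$\ast$ topology, which is compact metrizable since $L^{1}(G)$ is separable and carries a jointly continuous translation action. Then map $x\mapsto(g\mapsto f_{n}(g^{-1}.x))_{n}$; injectivity follows by the same a.e.\ argument.
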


In the presence of a quasi-invariant measure on \(X\), it is natural to ask for a compact model in which not only the action is continuous, but also the Radon--Nikodym cocycle can be taken continuous. In Poisson boundaries finding such a model is a classical topic going back to the seminal work of Furstenberg on semisimple Lie groups \cite{ furstenberg1963poisson,furstenberg1963noncomm}, and its many extensions; see e.g. \cite[\S2.8]{kaimanovich1996boundaries}, \cite{kaimanovich1995poisson}, \cite[\S4]{benoist2016random}, \cite[\S3.10]{furman2002random}). Here we show that in many common situations this can be done for all stationary actions. Let us give a precise definition of the object we seek for.

\begin{defn}
Let \(\left(G,\theta\right)\) be a measured group and \(\left(X,\mu\right)\) a stationary \(\left(G,\theta\right)\)-space. A {\bf compact Radon--Nikodym model} for \(\left(X,\mu\right)\) is a stationary \(\left(G,\theta\right)\)-space \(\big(\widehat{X},\widehat{\mu}\big)\) and a cocycle \(\eta:G\times\widehat{X}\to\mathbb{R}_{>0}\), such that the following hold:
\begin{enumerate}
    \item \(\widehat{X}\) is a compact \(G\)-space: a compact metric space with a continuous action of \(G\).
    \item \(\eta\) is a continuous version of the Radon--Nikodym cocycle of \(\big(\widehat{X},\widehat{\mu}\big)\) (recall Definition~\ref{dfn:strversion}).
    \item \(\eta\) is \(\theta\)-harmonic: for every \(x\in\widehat{X}\), the function \(G\to\mathbb{R}_{>0}\), \(g\mapsto\eta_{g^{-1}}\left(\widehat{x}\right)\), is \(\theta\)-harmonic.
    \item \(\left(X,\mu\right)\) is measurably isomorphic to \(\big(\widehat{X},\widehat{\mu}\big)\).\footnote{There is a \(\mu\)-conull set \(X_{o}\subseteq X\) and an injective \(G\)-equivariant map \(X_{o}\to\widehat{X}\) that pushes \(\mu\) to \(\widehat{\mu}\).}
\end{enumerate}
\end{defn}

We further seek for a universal compact Radon--Nikodym model:

\begin{defn}
Let \(\left(G,\theta\right)\) be a measured group. A {\bf universal compact Radon--Nikodym model} for \(\left(G,\theta\right)\) is a compact \(G\)-space \(\mathbb{G}\) with a \(\theta\)-harmonic continuous cocycle \(\mathfrak{g}:G\times\mathbb{G}\to\mathbb{R}_{>0}\), such that every stationary \(\left(G,\theta\right)\)-space \(\left(X,\mu\right)\) admits a compact Radon--Nikodym model \(\big(\widehat{X},\widehat{\mu}\big)\) with Radon--Nikodym cocycle \(\eta\), such that \(\widehat{X}\) is a \(G\)-invariant subset of \(\mathbb{G}\) and \(\eta=\mathfrak{g}\mid_{\widehat{X}}\).
\end{defn}

\subsection{Integrable harmonic majorant}

The following is a key step in the proof of Theorem~\ref{mthm:compactRNmodel}.

\begin{prop}\label{prop:Hcompact}
Let \(\left(G,\theta\right)\) be a measured group such that \(\mathbb{E}_{\theta}\left[\mathfrak{m}_{\theta}\right]<+\infty\) (and in particular, Harnack's inequality holds). Then the compact-open topology turns \(\mathbb{H}\left(G,\theta\right)\) into a compact metric space.
\end{prop}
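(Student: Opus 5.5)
Our plan is an Arzelà--Ascoli argument. First we show that \(\mathbb{H}\left(G,\theta\right)\) is a uniformly bounded and equicontinuous family on each compact set. Second we show that it is closed under locally uniform limits. Since \(G\) is lcsc, the compact-open topology on \(C\left(G\right)\) is metrizable, via a compact exhaustion \(C_{1}\subseteq C_{2}\subseteq\dotsm\) and the metric \(\sum_{k}2^{-k}\min\{1,\sup_{C_{k}}|u-v|\}\). Relative compactness together with closedness then gives compactness.

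\emph{Uniform local bounds.} By Proposition~\ref{prop:mtheta}(2) and Lemma~\ref{lem:submultip}, \(\mathfrak{m}_{\theta}\) is finite and locally bounded. Hence \(\sup_{u\in\mathbb{H}}\sup_{g\in C}u\left(g\right)\leq\sup_{C}\mathfrak{m}_{\theta}<\infty\) for every compact \(C\). Moreover every \(u\in\mathbb{H}\) satisfies \(u\left(gh\right)\leq\mathfrak{m}_{\theta}\left(g\right)\mathfrak{m}_{\theta}\left(h\right)\) by Proposition~\ref{prop:mtheta}(3).

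\emph{Equicontinuity.} This is the main step, and it is where integrability enters. For \(u\in\mathbb{H}\) and \(g,g'\in C\), write
\[u\left(g\right)-u\left(g'\right)=\int_{G}u\left(gh\right)d\theta\left(h\right)-\int_{G}u\left(g'h\right)d\theta\left(h\right).\]
The two integrals are not directly comparable, because \(u(gh)\) and \(u(g'h)\) are not close for fixed \(h\). So instead we write the difference in terms of densities,
\[u\left(g\right)-u\left(g'\right)=\int_{G}u\left(k\right)\left(g\varphi-g'\varphi\right)\left(k\right)dm_{G}\left(k\right).\]
Substituting \(k=gh\) gives \(u(k)\le \mathfrak{m}_{\theta}(g)\mathfrak{m}_{\theta}(g^{-1}k)\). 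The difference is therefore bounded by \(M_{C}\int \mathfrak{m}_{\theta}(g^{-1}k)\,|g\varphi-g'\varphi|(k)\,dm_{G}(k)\), where \(M_{C}=\sup_{C}\mathfrak{m}_{\theta}\).

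To control this, we truncate. Fix \(\epsilon>0\) and, using \(\mathbb{E}_{\theta}[\mathfrak{m}_{\theta}]<\infty\), choose a compact \(L\) with \(\int_{G\setminus L}\mathfrak{m}_{\theta}\,d\theta<\epsilon\). Split the integral into the region \(k\in gL\) and its complement. On the tail we use both terms separately. The \(g\varphi\) part is \(\le\epsilon\). The \(g'\varphi\) part, after substituting \(k=g'h\) and using \(\mathfrak{m}_{\theta}(g^{-1}g'h)\le\mathfrak{m}_{\theta}(g^{-1}g')\mathfrak{m}_{\theta}(h)\), is bounded by \(M'\int_{G\setminus g'^{-1}gL}\mathfrak{m}_{\theta}\,d\theta\).

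We choose \(L\) slightly enlarged, say replaced by \(VL\) for a compact identity neighbourhood \(V\). Then for \(g'^{-1}g\in V\) the set \(g'^{-1}gL\) is controlled and the tail is small. On \(gL\) the weight \(\mathfrak{m}_{\theta}\) is bounded by \(\sup_{C^{-1}CVL}\mathfrak{m}_{\theta}\). That contribution is then controlled by \(\|g\varphi-g'\varphi\|_{L^{1}}\), which tends to \(0\) as \(g'\to g\) by \(L^{1}\)-continuity of translations, uniformly for \(g\) in \(C\).

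Getting this tail bookkeeping right, with uniformity in \(g\in C\), is the expected obstacle. If it proves awkward, we would first show equicontinuity at \(e_{G}\) and then propagate it using \(u(hg)/u(h)\in\mathbb{H}\) together with the local bounds.

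\emph{Closedness.} Let \(u_{n}\in\mathbb{H}\) converge uniformly on compacta to \(u\). Clearly \(u(e_{G})=1\) and \(u\ge \mathfrak{m}_{\theta}(g^{-1})^{-1}>0\), using the lower bound in Proposition~\ref{prop:mtheta}(3). For harmonicity we need \(\int u_{n}(gh)d\theta(h)\to\int u(gh)d\theta(h)\). This follows by dominated convergence with dominating function \(\mathfrak{m}_{\theta}(g)\mathfrak{m}_{\theta}(h)\), which is \(\theta\)-integrable by assumption. Hence \(u\in\mathbb{H}\).

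Combining equicontinuity, the local uniform bounds and Arzelà--Ascoli, \(\mathbb{H}\left(G,\theta\right)\) is relatively compact in \(C\left(G\right)\). Being closed, it is a compact metric space.
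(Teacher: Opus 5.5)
Your proposal is correct and follows essentially the same route as the paper: Arzel\`a--Ascoli with local bounds coming from $\mathfrak{m}_\theta$, equicontinuity via the density form of harmonicity, and closedness via the lower bound in Proposition~\ref{prop:mtheta}(3) plus dominated convergence with $\mathfrak{m}_\theta(g)\mathfrak{m}_\theta(h)$. After the substitution $k=gh$, your integral is exactly $\|\mathfrak{m}_\theta\cdot(\varphi-s\varphi)\|_{L^1(G)}$ with $s=g^{-1}g'$, and your tail bookkeeping (enlarging $L$ by a symmetric identity neighbourhood $V$ and absorbing the shift by submultiplicativity) is precisely the paper's Lemma~\ref{lem:deltaK}.
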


\begin{lem}\label{lem:deltaK}
Let \(\mathcal{K}\) be the class of compact identity neighborhoods in \(G\), and for \(K\in\mathcal{K}\) define
\[\delta\left(K\right):=\sup\nolimits_{k\in K}\left\Vert \mathfrak m_{\theta}\cdot\left(\varphi-k\varphi\right)\right\Vert_{L^{1}\left(G\right)},\]
where \(\varphi:=d\theta/dm_{G}\). Then under the assumption \(\mathbb{E}_{\theta}\left[\mathfrak{m}_{\theta}\right]<+\infty\), one has
\[\delta\left(K\right)\xrightarrow[\mathcal K\ni K\searrow\{e_{G}\}]{}0.\]
\end{lem}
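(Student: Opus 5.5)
The plan is to prove the convergence by a density argument in the weighted space \(L^{1}\left(\mathfrak{m}_{\theta}\,dm_{G}\right)\), using submultiplicativity to control translates. Three facts are used throughout. First, the hypothesis reads \(\mathbb{E}_{\theta}\left[\mathfrak{m}_{\theta}\right]=\int_{G}\mathfrak{m}_{\theta}\varphi\,dm_{G}<+\infty\), so \(\mathfrak{m}_{\theta}\varphi\in L^{1}\left(G\right)\). Second, \(\mathfrak{m}_{\theta}\) is finite-valued; the statement of Proposition~\ref{prop:Hcompact} asserts this, and we take it as given. Third, by Proposition~\ref{prop:mtheta}(1),(2), \(\mathfrak{m}_{\theta}\) is submultiplicative and locally bounded; Lemma~\ref{lem:submultip}, used for (2), requires \(\mathfrak{m}_{\theta}\) to be measurable, which we likewise take as given. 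Without loss of generality we only consider \(K\) contained in a fixed compact identity neighborhood \(K_{0}\), and we set \(M_{0}:=\sup_{K_{0}}\mathfrak{m}_{\theta}<+\infty\).

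The key estimate concerns translates. For \(k\in K_{0}\) and any measurable \(f\), left invariance of \(m_{G}\) and submultiplicativity give
\[\left\Vert\mathfrak{m}_{\theta}\cdot kf\right\Vert_{L^{1}\left(G\right)}=\int_{G}\mathfrak{m}_{\theta}\left(kh\right)\left|f\left(h\right)\right|dm_{G}\left(h\right)\leq\mathfrak{m}_{\theta}\left(k\right)\left\Vert\mathfrak{m}_{\theta}f\right\Vert_{L^{1}\left(G\right)}\leq M_{0}\left\Vert\mathfrak{m}_{\theta}f\right\Vert_{L^{1}\left(G\right)}.\]
Hence left translation by elements of \(K_{0}\) is uniformly bounded on the weighted \(L^{1}\) space.

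Next I approximate \(\varphi\). Fix \(\epsilon>0\) and choose \(\psi\in C_{\mathrm{c}}\left(G\right)\) with \(\left\Vert\mathfrak{m}_{\theta}\left(\varphi-\psi\right)\right\Vert_{L^{1}\left(G\right)}<\epsilon\). To construct it, first truncate to \(\varphi\mathbf{1}_{C}\), with \(C\) compact, using dominated convergence since \(\mathfrak{m}_{\theta}\varphi\in L^{1}\). On a fixed compact neighborhood of \(C\), \(\mathfrak{m}_{\theta}\) is bounded, so the weighted norm there is comparable to the plain \(L^{1}\) norm. We can then approximate \(\varphi\mathbf{1}_{C}\) in \(L^{1}\) by a continuous function supported in that neighborhood.

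The conclusion is a three-term triangle inequality:
\[\left\Vert\mathfrak{m}_{\theta}\left(\varphi-k\varphi\right)\right\Vert_{1}\leq\left\Vert\mathfrak{m}_{\theta}\left(\varphi-\psi\right)\right\Vert_{1}+\left\Vert\mathfrak{m}_{\theta}\left(\psi-k\psi\right)\right\Vert_{1}+\left\Vert\mathfrak{m}_{\theta}\cdot k\left(\psi-\varphi\right)\right\Vert_{1}.\]
By the approximation and the translate estimate, the first and third terms are at most \(\epsilon\left(1+M_{0}\right)\) uniformly in \(k\in K_{0}\). For the middle term, \(\psi-k\psi\) is supported in the compact set \(K_{0}\,\mathrm{supp}\left(\psi\right)\cup\mathrm{supp}\left(\psi\right)\), on which \(\mathfrak{m}_{\theta}\) is bounded by some constant \(M_{1}\). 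So that term is at most \(M_{1}\left\Vert\psi-k\psi\right\Vert_{L^{1}\left(G\right)}\), and this tends to \(0\) uniformly as \(k\) ranges over \(K\searrow\{e_{G}\}\) by uniform continuity of \(\psi\), that is, \(L^{1}\)-continuity of translations. Taking \(\limsup\) and then \(\epsilon\to0\) gives \(\delta\left(K\right)\to0\).

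The main point requiring care is that the bounds are uniform in \(k\); submultiplicativity together with local boundedness of \(\mathfrak{m}_{\theta}\) supplies exactly this. The measurability of \(\mathfrak{m}_{\theta}\), a supremum over an uncountable family, is implicitly assumed already in Proposition~\ref{prop:mtheta}(2).
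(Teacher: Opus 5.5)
Your proof is correct. It uses the same ingredients as the paper's proof:
\begin{enumerate}
\item the integrability \(\mathfrak{m}_{\theta}\varphi\in L^{1}(G)\);
\item local boundedness of \(\mathfrak{m}_{\theta}\);
\item the bound \(\mathfrak{m}_{\theta}(kh)\leq\mathfrak{m}_{\theta}(k)\mathfrak{m}_{\theta}(h)\) after the substitution \(g=kh\), which controls translates uniformly for \(k\) in a fixed compact set.
\end{enumerate}

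The difference lies only in the decomposition. The paper splits \(G\) spatially into a compact set \(CL\) and the tail \(G\setminus CL\). On \(CL\) the weight is bounded, so the plain \(L^{1}\)-continuity of translations applies directly to \(\varphi\). The tail is handled by submultiplicativity. You instead split \(\varphi=\psi+(\varphi-\psi)\), with \(\psi\in C_{\mathrm{c}}(G)\) close to \(\varphi\) in the weighted norm, and run an \(\epsilon/3\) argument. This is the textbook proof that translations act continuously on \(L^{1}(\mathfrak{m}_{\theta}\,dm_{G})\). It costs an extra approximation step that the paper avoids by working with \(\varphi\) itself.

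Finally, the measurability of \(\mathfrak{m}_{\theta}\), which you (like the paper) take for granted, is automatic. Since \(g\mapsto g\varphi\) is \(L^{1}\)-continuous, Fatou's lemma applied along a.e.-convergent subsequences shows that every \(u\in\mathbb{H}(G,\theta)\) is lower semicontinuous. Hence \(\mathfrak{m}_{\theta}\), as a supremum of such functions, is lower semicontinuous and therefore Borel.
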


\begin{proof}[Proof of Lemma~\ref{lem:deltaK}]
By the assumption,
\[\mathbb{E}_{\theta}\left[\mathfrak{m}_{\theta}\right]=\int_{G}\mathfrak{m}_{\theta}\left(g\right)\varphi\left(g\right)dm_{G}\left(g\right)<+\infty,\]
and thus \(\mathfrak{m}_{\theta}\cdot\varphi\in L^{1}\left(G\right)\). Fix a compact identity neighborhood \(C\). Since \(\mathfrak{m}_{\theta}\) is locally bounded by Proposition~\ref{prop:mtheta}(2), let
\[M:=\sup\nolimits_{g\in C}\mathfrak m_{\theta}\left(g\right)<+\infty.\]
Given \(\epsilon>0\), choose a compact set \(L\) in \(G\) with \(\int_{G\setminus L}\mathfrak m_{\theta}\left(g\right)\varphi\left(g\right)dm_{G}\left(g\right)<\epsilon\). For every \(k\in C\) we have
\begin{align*}
\left\Vert \mathfrak m_{\theta}\cdot\left(\varphi-k\varphi\right)\right\Vert_{L^{1}\left(G\right)}
&\leq\int_{CL}\mathfrak m_{\theta}\left(g\right)\left|\varphi\left(g\right)-k\varphi\left(g\right)\right|dm_{G}\left(g\right)\\
&\qquad+\int_{G\setminus CL}\mathfrak m_{\theta}\left(g\right)\varphi\left(g\right)dm_{G}\left(g\right)
+\int_{G\setminus CL}\mathfrak m_{\theta}\left(g\right)\cdot k\varphi\left(g\right)dm_{G}\left(g\right).
\end{align*}
The first term is bounded by \(\sup\nolimits_{g\in CL}\mathfrak m_{\theta}\left(g\right)\cdot\left\Vert \varphi-k\varphi\right\Vert_{L^{1}\left(G\right)}\). Since \(CL\) is compact and \(\mathfrak{m}_{\theta}\) is locally bounded, and using the \(L^{1}\)-continuity of translations of \(\varphi\in L^{1}\left(G\right)\), there is an open identity neighborhood \(U\) so that \(\sup\nolimits_{g\in CL}\mathfrak m_{\theta}\left(g\right)\cdot\left\Vert \varphi-k\varphi\right\Vert_{L^{1}\left(G\right)}<\epsilon\) for all \(k\in U\). The second term is bounded by \(\epsilon\) since \(L\subseteq CL\). For the third term, by changing variables \(g=kh\) it is bounded by
\begin{align*}
\int_{G}\mathbf{1}_{G\setminus CL}\left(kh\right)\mathfrak m_{\theta}\left(kh\right)\varphi\left(h\right)dm_{G}\left(h\right)
&\leq M\cdot\int_{G}\mathbf{1}_{G\setminus CL}\left(kh\right)\mathfrak m_{\theta}\left(h\right)\varphi\left(h\right)dm_{G}\left(h\right)\\
&\leq M\cdot\int_{G\setminus L}\mathfrak m_{\theta}\left(h\right)\varphi\left(h\right)dm_{G}\left(h\right)<M\epsilon,
\end{align*}
where the first inequality uses that \(\mathfrak m_{\theta}\) is submultiplicative, and the second uses that \(h\in L\implies kh\in CL\). We conclude that \(\delta\left(K\right)<\left(M+2\right)\epsilon\) for every \(\mathcal{K}\ni K\subseteq U\). As \(\epsilon\) is arbitrary, this completes the proof.
\end{proof}

\begin{proof}[Proof of Proposition~\ref{prop:Hcompact}]
A routine Arzel\`{a}--Ascoli argument for \(\sigma\)-compact spaces shows that a closed set of \(C\left(G,\mathbb{R}_{\geq 0}\right)\) whose elements are uniformly bounded and equicontinuous on compact sets, is compact. We then show that \(\mathbb{H}\left(G,\theta\right)\) is such a set. Fix some compact set \(C\) in \(G\). The uniform boundedness of the elements of \(\mathbb{H}\left(G,\theta\right)\) on \(C\) is a consequence of the local boundedness of \(\mathfrak{m}_{\theta}\) as in Proposition~\ref{prop:mtheta}(2). For the equicontinuity on \(C\), let
\[M:=\sup\nolimits_{g\in C}\mathfrak{m}_{\theta}\left(g\right)<+\infty.\]
Given \(u\in\mathbb{H}\left(G,\theta\right)\) and \(h\in C\), for every \(K\in\mathcal{K}\) and \(k\in K\) we have the bound
\begin{align*}
\left|u\left(h\right)-u\left(hk\right)\right|
&=\Big|\int_{G}u\left(hg\right)\left(\varphi\left(g\right)-k\varphi\left(g\right)\right)dm_{G}\left(g\right)\Big|\\
&\leq\int_{G}u\left(hg\right)\left|\varphi\left(g\right)-k\varphi\left(g\right)\right|dm_{G}\left(g\right)\\
&\leq M\cdot\int_{G}\mathfrak{m}_{\theta}\left(g\right)\left|\varphi\left(g\right)-k\varphi\left(g\right)\right|dm_{G}\left(g\right)\leq M\cdot\delta\left(K\right),
\end{align*}
where the second inequality is by the aforementioned property of \(\mathfrak{m}_{\theta}\) and using that \(h\in C\), so that
\[u\left(hg\right)\leq u\left(h\right)\mathfrak{m}_{\theta}\left(g\right)\leq\mathfrak{m}_{\theta}\left(h\right)\mathfrak{m}_{\theta}\left(g\right)\leq M\cdot\mathfrak{m}_{\theta}\left(g\right)\text{ for all }g\in G.\]
Then the equicontinuity of the elements of \(\mathbb{H}\left(G,\theta\right)\) on \(C\) follows from Lemma~\ref{lem:deltaK}. In particular, since \(C\) was arbitrary, it follows that the elements of \(\mathbb{H}\left(G,\theta\right)\) are continuous, and \(\mathbb{H}\left(G,\theta\right)\subset C\left(G,\mathbb{R}_{\geq 0}\right)\).

We next claim that \(\mathbb{H}\left(G,\theta\right)\) is closed in \(C\left(G,\mathbb{R}_{\geq 0}\right)\) with the compact-open topology. Suppose \(\mathbb{H}\left(G,\theta\right)\ni u_{n}\to u\in C\left(G,\mathbb{R}_{\geq 0}\right)\) uniformly on compact sets. Since \(u_{n}\left(e_{G}\right)=1\) for all \(n\) then \(u\left(e_{G}\right)=1\); as for the positivity, for an arbitrary compact set \(C\), if we let \(M:=\sup_{g\in C}\mathfrak{m}_{\theta}\left(g^{-1}\right)\) then by Proposition~\ref{prop:mtheta}(3) one has \(u_{n}\mid_{C}\geq M^{-1}\) for all \(n\), hence also \(u\mid_{C}\geq M^{-1}\), and since \(C\) is arbitrary it follows that \(u>0\); finally, as for the \(\theta\)-harmonicity, fix \(g\in G\), and as we mentioned above,
\[u_{n}\left(gh\right)\leq\mathfrak{m}_{\theta}\left(h\right)u_{n}\left(g\right)\leq\mathfrak{m}_{\theta}\left(h\right)\mathfrak{m}_{\theta}\left(g\right)\,\,\text{ for all }n\text{ and }h\in G.\]
However, since \(\mathbb{E}_{\theta}\left[\mathfrak{m}_{\theta}\right]<+\infty\), the right-hand side is \(\theta\)-integrable as a function of \(h\), so the \(\theta\)-harmonicity of \(u\) follows by dominated convergence. This concludes that \(\mathbb{H}\left(G,\theta\right)\) is also closed in \(C\left(G,\mathbb{R}_{\geq 0}\right)\).
\end{proof}

\subsection{Compactly generated groups}

We now specialize in compactly generated group, where growth functions control Harnack's inequality and the integrability of the harmonic majorant.

\smallskip

Suppose \(G\) is an lcsc group generated (algebraically) by a compact symmetric identity neighborhood \(K\). The associated word length \(\left|\cdot\right|=\left|\cdot\right|_{K}\) is the subadditive function
\[\left|\cdot\right|:G\to\left[0,+\infty\right),\quad\left|g\right|:=\inf\{n\geq 0:g\in K^{n}\}.\]
For \(R>0\), denote by \(B_{R}\) the ball of radius \(R\) around \(e_{G}\) with respect to \(\left|\cdot\right|\), which is a compact set, and define the {\bf Harnack exponent} of \(\left(G,\theta\right)\) to be the (potentially infinite) constant
\[\gamma_{\theta}:=\limsup_{r\to+\infty}\tfrac{1}{r}\log\sup\left\{ \mathfrak{m}_{\theta}\left(g\right):g\in B_{r}\right\}\in\left[0,+\infty\right].\]
As word lengths are pairwise quasi-isometric,\footnote{If \(K^{\prime}\) is another compact symmetric identity neighborhoods generating \(G\) with word length \(\left|\cdot\right|^{\prime}\), there are constants \(\alpha\geq 1\) and \(\beta\geq0\) such that \(\alpha^{-1}\cdot\left|g\right|-\beta\leq\left|g\right|^{\prime}\leq\alpha\cdot\left|g\right|+\beta\) for all \(g\in G\).} the exponent \(\gamma_{\theta}\) is independent on the choice of \(K\).

\begin{prop}\label{prop:compgen}
Let \(\left(G,\theta\right)\) be a measured group as above.
\begin{enumerate}
    \item \(\left(G,\theta\right)\) satisfies Harnack's inequality if and only if \(\gamma_{\theta}<+\infty\).
    \item Suppose for every \(r>0\) there is a positive constant \(C\left(r\right)>0\) such that
    \[\varphi\left(g_{1}\right)\leq C\left(r\right)\varphi\left(g_{2}\right)\text{ whenever }\left|\left|g_{1}\right|-\left|g_{2}\right|\right|\leq r.\]
    Then \(\left(G,\theta\right)\) has Harnack's inequality.
    \item If \(\gamma_{\theta}<+\infty\) and \(\mathbb{E}_{\theta}\big[e^{\gamma\left|\cdot\right|}\big]<+\infty\) for some \(\gamma>\gamma_{\theta}\), then the harmonic majorant is integrable:
    \[\mathbb{E}_{\theta}\big[\mathfrak{m}_{\theta}\big]<+\infty.\]
\end{enumerate}
\end{prop}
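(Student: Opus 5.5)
The plan is to prove the three parts separately, each from submultiplicativity and local boundedness of \(\mathfrak{m}_{\theta}\) (Proposition~\ref{prop:mtheta}(1),(2)) together with the structure of word balls. Two elementary facts will be used throughout. First, \(B_{r}=K^{\lfloor r\rfloor}\) is compact. Second, \(\bigcup_{r}B_{r}=G\), because \(K\) generates \(G\).

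For (1), assume first that Harnack's inequality holds. Then \(\mathfrak{m}_{\theta}\) is locally bounded by Proposition~\ref{prop:mtheta}(2), so \(M:=\max\{1,\sup_{k\in K}\mathfrak{m}_{\theta}(k)\}<+\infty\). Any \(g\in B_{r}\) can be written as \(g=k_{1}\dotsm k_{n}\) with \(k_{i}\in K\) and \(n\leq r\). Submultiplicativity then gives \(\mathfrak{m}_{\theta}(g)\leq M^{n}\leq M^{r}\), hence \(\gamma_{\theta}\leq\log M<+\infty\). Conversely, suppose \(\gamma_{\theta}<+\infty\). By definition of the \(\limsup\), \(\sup_{B_{r}}\mathfrak{m}_{\theta}<+\infty\) for all sufficiently large \(r\). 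Since the balls increase and exhaust \(G\), every \(g\in G\) lies in such a ball, so \(\mathfrak{m}_{\theta}(g)<+\infty\).

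For (2), I will show that every \(k\in K\) is \(\theta\)-suitable with a uniform constant. Fix \(k\in K\) and \(h\in G\). By subadditivity and symmetry of \(K\), \(\bigl||k^{-1}h|-|h|\bigr|\leq 1\). The hypothesis with \(r=1\) therefore gives \(k\varphi(h)=\varphi(k^{-1}h)\leq C(1)\varphi(h)\), that is, \(k\theta\leq C(1)\theta\). This is \(\theta\)-suitability with \(R=1\), \(k=k_{1}=1\) and \(\alpha_{1}=C(1)\). Lemma~\ref{lem:suitableHarnack} then yields \(\mathfrak{m}_{\theta}(k)\leq C(1)\) for every \(k\in K\). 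Submultiplicativity extends this to all of \(G\): \(\mathfrak{m}_{\theta}(g)\leq C(1)^{|g|}<+\infty\). As a byproduct, \(\gamma_{\theta}\leq\log C(1)\).

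For (3), pick \(\gamma^{\prime}\) with \(\gamma_{\theta}<\gamma^{\prime}<\gamma\). By definition of \(\gamma_{\theta}\) there is \(r_{0}\) such that \(\sup_{B_{r}}\mathfrak{m}_{\theta}\leq e^{\gamma^{\prime}r}\) for all \(r\geq r_{0}\). By part (1) Harnack's inequality holds, so \(\mathfrak{m}_{\theta}\) is locally bounded, and \(D:=\sup_{B_{r_{0}}}\mathfrak{m}_{\theta}<+\infty\) because \(B_{r_{0}}\) is compact. Since every \(g\) lies in \(B_{|g|}\), we get the pointwise bound
\[\mathfrak{m}_{\theta}(g)\leq\max\{D,e^{\gamma^{\prime}|g|}\}\leq D+e^{\gamma|g|}\quad\text{for all }g\in G.\]
Integrating against \(\theta\) gives \(\mathbb{E}_{\theta}[\mathfrak{m}_{\theta}]\leq D+\mathbb{E}_{\theta}[e^{\gamma|\cdot|}]<+\infty\).

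None of these steps looks like a real obstacle. The only points needing care are measurability, and the fact that \(\limsup\) finiteness controls \(\sup_{B_{r}}\mathfrak{m}_{\theta}\) only for large \(r\); the latter is handled by using local boundedness on the compact ball \(B_{r_{0}}\).
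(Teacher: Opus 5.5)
Your proposal is correct and follows essentially the paper's route. In (1) your word-decomposition bound \(\mathfrak{m}_{\theta}(g)\leq M^{|g|}\) is the paper's subadditivity estimate \(\Theta(r)\leq r\,\Theta(1)\). In (2) you use the same \(\theta\)-suitability with \(R=1\) and exponents \(1\), but only for elements of \(K\) with constant \(C(1)\), then extend by submultiplicativity; the paper instead applies \(C(|g|)\) to each \(g\) directly. Your variant needs the hypothesis only at \(r=1\) and gives the explicit bound \(\gamma_{\theta}\leq\log C(1)\). In (3) you reach the same pointwise bound \(\mathfrak{m}_{\theta}(g)\leq D+e^{\gamma|g|}\), taking the \(\limsup\) directly rather than through the paper's subadditive decomposition.
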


\begin{proof}[Proof of Proposition~\ref{prop:compgen}]
For (1), for \(r>0\) let \(A\left(r\right):=\sup\left\{ \mathfrak{m}_{\theta}\left(g\right):g\in B_{r}\right\}\) and \(\Theta\left(r\right):=\log A\left(r\right)\). Since \(g\mapsto\mathfrak{m}_{\theta}\left(g\right)\) is submultiplicative, then so is \(r\mapsto A\left(r\right)\), hence \(r\mapsto\Theta\left(r\right)\) is subadditive, so \(\gamma_{\theta}\) is well-defined. Moreover,
\[\tfrac{1}{r}\Theta\left(r\right)\leq\Theta\left(1\right)=\log\sup\left\{ \mathfrak{m}_{\theta}\left(g\right):g\in K\right\}\text{ for all }r>0.\]
Therefore, Harnack's inequality implies \(\gamma_{\theta}\leq\Theta\left(1\right)<+\infty\). The fact that \(\gamma_{\theta}=+\infty\) otherwise is clear.

For (2), by Lemma~\ref{lem:suitableHarnack} it suffices to show that all \(G\)'s elements are \(\theta\)-suitable. Fix \(g\in G\). Then for every \(x\in G\), from \(\left|\left|g^{-1}x\right|-\left|x\right|\right|\leq\left|g\right|\) we get that
\[g\varphi\left(x\right)\leq C\left(\left|g\right|\right)\cdot\varphi\left(x\right).\]
This shows that \(g\theta\leq C\left(\left|g\right|\right)\cdot\theta\), and hence \(g\) is \(\theta\)-suitable with \(k=1\), \(R=1\), \(k_{1}=1\), and \(\alpha_{1}=C\left(\left|g\right|\right)\).

For (3), put \(\epsilon=\left(\gamma-\gamma_{\theta}\right)/2\), and pick \(r_{o}\geq 1\) such that \(\Theta\left(r\right)\leq \left(\gamma_{\theta}+\epsilon\right)r\) for all \(r\geq r_{o}\). Fix \(r\geq r_{o}\), and write \(r=qr_{o}+s\) with some integer \(q\geq 1\) and \(0\leq s<r_{o}\). Then
\[\Theta\left(r\right)\leq \Theta\left(qr_{o}\right)+\Theta\left(s\right)\leq \left(\gamma_{\theta}+\epsilon\right)qr_{o}+\Theta\left(s\right)\leq \left(\gamma_{\theta}+\epsilon\right)r+\Theta\left(r_{o}\right),\]
where the first inequality is by subadditivity, the second follows from the choice of \(r_{o}\), and the third from \(qr_{o}\leq r\) together with the monotonicity of \(\Theta\). If \(0\leq r<r_{o}\), then trivially
\[\Theta\left(r\right)\leq \Theta\left(r_{o}\right)\leq \left(\gamma_{\theta}+\epsilon\right)r+\Theta\left(r_{o}\right).\]
We thus found that for every \(r\geq 0\),
\[\Theta\left(r\right)\leq \left(\gamma_{\theta}+\epsilon\right)r+\Theta\left(r_{o}\right),\]
and hence for every \(g\in G\),
\[\mathfrak{m}_{\theta}\left(g\right)\leq A\left(\left|g\right|\right)=\exp\left(\Theta\left(\left|g\right|\right)\right)\leq \exp\left(\left(\gamma_{\theta}+\epsilon\right)\left|g\right|\right)\cdot A\left(r_{o}\right)\leq \exp\left(\gamma\left|g\right|\right)\cdot A\left(r_{o}\right).\]
It follows that
\[\mathbb{E}_{\theta}\left[\mathfrak{m}_{\theta}\right]\leq A\left(r_{o}\right)\cdot\mathbb{E}_{\theta}\big[e^{\gamma\left|\cdot\right|}\big]<+\infty.\qedhere\]
\end{proof}

\subsection{A universal compact model for the Radon--Nikodym factor}

The Radon--Nikodym factor was introduced by Kaimanovich--Vershik in the discrete case \cite[\S3.3]{kaimanovich1983random}, and by Nevo--Zimmer in the general case \cite[\S1.2]{nevo2000rigidity}. By definition, the Radon--Nikodym factor of a stationary \(\left(G,\theta\right)\)-space \(\left(X,\mu\right)\) is the sub-\(\sigma\)-algebra generated by the Radon--Nikodym derivatives \(\{\varrho_{g}\left(\cdot\right):g\in G\}\). By the cocycle property, this sub-\(\sigma\)-algebra is \(G\)-invariant, and therefore it forms a factor of \(\left(X,\mu\right)\) that can be realized measurably as a stationary \(\left(G,\theta\right)\)-space. Towards constructing the universal compact Radon--Nikodym model, we construct a topological realization of the Radon--Nikodym factor, this too in a universal compact way (a similar discussion in discrete countable groups can be found \cite[\S2.1]{sayag2025entropy}).

\smallskip

Suppose \(\left(G,\theta\right)\) satisfies \(\mathbb{E}_{\theta}\left[\mathfrak{m}_{\theta}\right]<+\infty\), and thus by Proposition~\ref{prop:Hcompact} the space \(\mathbb{H}\left(G,\theta\right)\) is a compact metric space with the compact-open topology. Turn \(\mathbb{H}\left(G,\theta\right)\) into a compact \(G\)-space by letting
\[g.u\left(h\right)=u\left(g^{-1}h\right)/u\left(g^{-1}\right).\]
This action is clearly continuous. There is a natural cocycle for this action, given by point evaluation:
\[\mathrm{ev}:G\times\mathbb{H}\to\mathbb{R}_{>0},\quad\mathrm{ev}_{g}\left(u\right)=u\left(g^{-1}\right).\]
It is straightforward to verify that this is a continuous cocycle.

\smallskip

In order to construct the desired factor map, we will need a version of the Radon--Nikodym cocycle whose fibers are elements of \(\mathbb{H}\left(G,\theta\right)\) on some \(\mu\)-conull set:

\begin{defn}\label{dfn:goodRN}
Let \(\left(X,\mu\right)\) be a stationary \(\left(G,\theta\right)\)-space. A measurable map \(\varrho:G\times X\to\mathbb{R}_{>0}\) is said to be a {\bf \(\theta\)-harmonic version} for the associated Radon--Nikodym cocycle, if the following hold:
\begin{enumerate}
    \item \(\varrho\) is a version of the associated Radon--Nikodym cocycle (recall Definition \ref{dfn:version}).
    \item There is a \(\mu\)-conull set \(X^{\ast}\) such that \(u_{x}:g\mapsto\varrho_{g^{-1}}\left(x\right)\) lies in \(\mathbb{H}\left(G,\theta\right)\) for every \(x\in X^{\ast}\).
\end{enumerate}
\end{defn}

\begin{lem}\label{lem:goodRN}
Every Radon--Nikodym cocycle of a stationary \(\left(G,\theta\right)\)-space admits a \(\theta\)-harmonic version.
\end{lem}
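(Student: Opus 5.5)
Start from a strict version \(\varpi\) of the Radon--Nikodym cocycle, which exists by the Mackey cocycle theorem. For \(x\in X\) put \(u_{x}(g):=\varpi_{g^{-1}}(x)\). The strict cocycle identity gives, for all \(g,h\) and all \(x\),
\[u_{x}(gh)=\varpi_{h^{-1}g^{-1}}(x)=\varpi_{h^{-1}}(g^{-1}.x)\,\varpi_{g^{-1}}(x)=u_{g^{-1}.x}(h)\,u_{x}(g),\]
and \(u_{x}(e_{G})=\varpi_{e_{G}}(x)=1\). So normalization and positivity hold everywhere; the only issue is harmonicity. Harmonicity of \(u_{x}\) at \(g\) reads \(\int_{G}u_{x}(gh)\,d\theta(h)=u_{x}(g)\). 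By the displayed identity this is equivalent to
\[\int_{G}\varpi_{h^{-1}}(g^{-1}.x)\,d\theta(h)=1.\]
By the Fact following the definition of stationary spaces, the set \(E:=\{y:\int_{G}\varpi_{h^{-1}}(y)\,d\theta(h)=1\}\) is \(\mu\)-conull. (This is measurable by Fubini, and \(\varpi_{h^{-1}}=dh\mu/d\mu\) a.e. for each \(h\), so Fubini transfers the a.e. identity.) Hence \(u_{x}\) is \(\theta\)-harmonic at \(g\) exactly when \(g^{-1}.x\in E\).

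I therefore take \(X^{\ast}:=\{x:g^{-1}.x\in E\text{ for all }g\in G\}=\bigcap_{g}g.E\), the largest \(G\)-invariant subset of \(E\). The main obstacle is that this is an uncountable intersection, so neither its measurability nor its conullness is automatic.

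My first attempt is to pass to the invariant set defined with a Haar-null exception. Let
\[X_{1}:=\{x:m_{G}(\{g:g^{-1}.x\notin E\})=0\}.\]
This set is measurable by Fubini, \(G\)-invariant, and \(\mu\)-conull. Conullness follows from Fubini on \(G\times X\) together with quasi-invariance: for each \(g\) we have \(\mu(g.E^{\complement})=0\). For \(x\in X_{1}\), harmonicity holds at \(m_{G}\)-a.e. \(g\), and I need to upgrade this to every \(g\). To do so I modify the version itself rather than the set: on \(X_{1}\) replace \(u_{x}\) by \(\tilde u_{x}:=u_{x}\ast\theta\), normalized at the identity, i.e. \(\tilde\varrho_{g^{-1}}(x):=\int u_{x}(gh)\,d\theta(h)\big/\int u_{x}(h)\,d\theta(h)\). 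Since \(u_{x}=u_{x}\ast\theta\) \(m_{G}\)-a.e. and \(\theta\ll m_{G}\), we get \(\tilde u_{x}\ast\theta=u_{x}\ast\theta\ast\theta=u_{x}\ast\theta\). Indeed, \(u_{x}\ast\theta\ast\theta(g)=\int u_{x}\ast\theta(gh)\,d\theta(h)\), and \(u_{x}\ast\theta(gh)=u_{x}(gh)\) for \(\theta\)-a.e. \(h\) because \(gh\) runs over a set absolutely continuous to Haar. So \(\tilde u_{x}\) is exactly harmonic everywhere. The normalizing constant is \(\int u_{x}\,d\theta\), which equals \(1\) whenever \(x\in E\); so in fact I would use \(X^{\ast}:=X_{1}\cap E\), which is conull, and on it no normalization is needed. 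Finiteness of \(u_{x}\ast\theta\) everywhere follows from the same a.e. identity, which gives \(u_{x}\ast\theta(g)=u_{x}\ast\theta\ast\theta(g)\), and this is finite as a \(\theta\)-average of an a.e.-finite harmonic function. This finiteness point is the step I would double check carefully; I expect it to follow from the cocycle relation \(u_{x}(gh)=u_{g^{-1}.x}(h)u_{x}(g)\) together with \(g^{-1}.x\in E\) for a.e. \(g\).

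Finally I check that \(\tilde\varrho\) is still a version, i.e. that \(\tilde\varrho_{g}=\varpi_{g}\) \(\mu\)-a.e. for each fixed \(g\). By Fubini, for each fixed \(g\) and \(\mu\)-a.e. \(x\), \(g^{-1}.x\in E\) (quasi-invariance), so \(\tilde u_{x}(g)=u_{x}(g)\). Measurability of \((g,x)\mapsto\tilde\varrho_{g}(x)\) comes from Fubini applied to the measurable integrand. Outside \(X^{\ast}\) I set \(\tilde\varrho:=\varpi\). This yields the required \(\theta\)-harmonic version, with conull set \(X^{\ast}\).
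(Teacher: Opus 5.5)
Your construction is the paper's own. Your $\tilde u_x=u_x\ast\theta$ is exactly the paper's $u_x(g)=\int_G\psi_h(x)\varphi(g^{-1}h)\,dm_G(h)$, and your set $X_1\cap E$ plays the role of the paper's Fubini set $X^{\ast}$. Your checks of exact harmonicity, normalization on $E$, and the version property are correct. The genuine gap is the step you flagged: finiteness of $\tilde u_x(g)$ at \emph{every} $g\in G$, which is needed because elements of $\mathbb{H}(G,\theta)$ are $\mathbb{R}_{>0}$-valued. Your justification does not work, since a $\theta$-average of an a.e.-finite function can be $+\infty$. The cocycle identity only gives $\tilde u_x(g)=u_x(g)\,F(g^{-1}.x)$ with $F(y):=\int_G\varpi_{h^{-1}}(y)\,d\theta(h)\in(0,+\infty]$, and you know $F(g^{-1}.x)=1$ only for $m_G$-a.e.\ $g$. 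The paper's proof also passes over this point without comment.

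In fact the gap cannot be closed in the stated generality. The value $u\ast\theta(g)=\int_G u(k)\,g\varphi(k)\,dm_G(k)$ depends only on the $m_G$-class of $u$, so an everywhere $\theta$-harmonic function is determined at every point by its a.e.\ class. By Fubini, for any version $\varrho$ and $\mu$-a.e.\ $x$, the function $g\mapsto\varrho_{g^{-1}}(x)$ agrees $m_G$-a.e.\ with the explicit formula of Lemma~\ref{lem:Rinfstat}.

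Now take the example of Theorem~\ref{thm:cntexm} with $2\alpha-\beta\geq 1$, which is compatible with $0<\beta<\alpha<1$, and let $g_x$ be translation by $x$. For $h=(a,b)$ the formula gives $\frac{d(g_xh)\mu}{d\mu}(x)=\frac{\rho_\infty(-b/a)}{a\,\rho_\infty(x)}$. Substituting $b=-az$,
\[
\int_G\frac{d(g_xh)\mu}{d\mu}(x)\,d\theta(h)=\frac{1}{\rho_\infty(x)}\int_0^{\infty}f_A(a)\int_{\mathbb{R}}\rho_\infty(z)\,f_B(-az)\,dz\,da .
\]
For $a\in(1-\delta,1+\delta)$ and $z\in(0,1/8)$ one has $\rho_\infty(z)\geq c\,z^{\beta-\alpha}$ and $f_B(-az)\geq c'z^{-\alpha}$. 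So the inner integral dominates $c\,c'\int_0^{1/8}z^{\beta-2\alpha}\,dz=+\infty$. Hence, for $\mu$-a.e.\ $x$, any everywhere-harmonic function in the required a.e.\ class takes the value $+\infty$ at $g_x$, and no $\theta$-harmonic version exists.

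What is true, and is all that Proposition~\ref{prop:RNfact} needs, is the lemma under $\mathbb{E}_\theta[\mathfrak{m}_\theta]<+\infty$. By Proposition~\ref{prop:mtheta}(4) and Fubini, $u_x\leq\mathfrak{m}_\theta$ holds $m_G$-a.e.\ for $\mu$-a.e.\ $x$. Since $g\theta\ll m_G$, submultiplicativity then gives, for every $g$,
\[
\tilde u_x(g)\leq\int_G\mathfrak{m}_\theta(gh)\,d\theta(h)\leq\mathfrak{m}_\theta(g)\,\mathbb{E}_\theta[\mathfrak{m}_\theta]<+\infty .
\]
Intersecting your $X^{\ast}$ with this conull set completes your argument in that setting.
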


\begin{proof}[Proof of Lemma~\ref{lem:goodRN}]
Using the Mackey cocycle theorem, pick a strict version \(\varpi:G\times X\to\mathbb{R}_{>0}\) of the Radon--Nikodym cocycle of \(\left(X,\mu\right)\). Put \(\psi_{g}\left(x\right)=\varpi_{g^{-1}}\left(x\right)\), and by \(\theta\)-stationarity for every \(g\in G\) we have
\begin{equation}\label{eq:statpi}
\psi_{g}\left(x\right)=\int_{G}\psi_{gh}\left(x\right)d\theta\left(h\right)\text{ for }\mu\text{-a.e. }x\in X.
\end{equation}
Now letting \(\varphi=d\theta/dm_{G}\), define
\[u_{x}\left(g\right):=\int_{G}\psi_{h}\left(x\right)\varphi\left(g^{-1}h\right)dm_{G}\left(h\right).\]
For every fixed \(g\in G\), by \eqref{eq:statpi} we have
\[u_{x}\left(g\right)=\psi_{g}\left(x\right)\text{ for }\mu\text{-a.e. }x\in X.\]
Then by Fubini's theorem, there is a \(\mu\)-conull set \(X^{\ast}\subseteq X\) such that for every \(x\in X^{\ast}\),
\[u_{x}\left(g\right)=\psi_{g}\left(x\right)\text{ for }m_{G}\text{-a.e. }g\in G.\]
After intersecting \(X^{\ast}\) with the another \(\mu\)-conull set, we may assume also that \(u_{x}\left(e_{G}\right)=1\) for every \(x\in X^{\ast}\). Now for every \(x\in X^{\ast}\), since \(u_{x}\left(\cdot\right)=\psi_{\left(\cdot\right)}\left(x\right)\) \(m_{G}\)-a.e. we obtain
\[u_{x}\ast\theta\left(g\right)=\int_{G}u_{x}\left(h\right)\varphi\left(g^{-1}h\right)dm_{G}\left(h\right)=\int_{G}\psi_{h}\left(x\right)\varphi\left(g^{-1}h\right)dm_{G}\left(h\right)=u_{x}\left(g\right).\]
Thus \(u_{x}\in\mathbb{H}\left(G,\theta\right)\) for every \(x\in X^{\ast}\). Finally, the desired \(\theta\)-harmonic version is given by
\[\varrho:G\times X\to\mathbb{R}_{>0},\quad\varrho_{g}\left(x\right):=u_{x}\left(g^{-1}\right).\qedhere\]
\end{proof}

We can now establish the topological realization of the Radon--Nikodym factor.

\begin{prop}\label{prop:RNfact}
Let \(\left(G,\theta\right)\) be a measured group satisfying \(\mathbb{E}\left[\mathfrak{m}_{\theta}\right]<+\infty\). Then for every stationary \(\left(G,\theta\right)\)-space \(\left(X,\mu\right)\), there exists a topological realization of its Radon--Nikodym factor of the form
\[\rho:\left(X,\mu\right)\to\left(\mathbb{H}\left(G,\theta\right),\rho_{\ast}\mu\right).\]
\end{prop}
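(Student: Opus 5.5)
The natural candidate is the map $x\mapsto u_x$ obtained from Lemma~\ref{lem:goodRN}. Fix a $\theta$-harmonic version $\varrho$ of the Radon--Nikodym cocycle with conull set $X^{\ast}$, and define
\[\rho:X^{\ast}\to\mathbb{H}\left(G,\theta\right),\quad\rho\left(x\right):=u_{x}=\varrho_{\left(\cdot\right)^{-1}}\left(x\right),\]
extended arbitrarily (say by $u\equiv 1$) off $X^{\ast}$. Three things then need checking: that $\rho$ is measurable into the compact metric space $\mathbb{H}\left(G,\theta\right)$ of Proposition~\ref{prop:Hcompact}; that it is $G$-equivariant almost surely for the action $g.u\left(h\right)=u\left(g^{-1}h\right)/u\left(g^{-1}\right)$; and that the $\sigma$-algebra $\rho^{-1}\left(\mathrm{Borel}\right)$ coincides modulo $\mu$ with the $\sigma$-algebra generated by $\{\varrho_{g}:g\in G\}$. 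Once these hold, $\rho_{\ast}\mu$ is $\theta$-stationary, since pushforwards of stationary measures under equivariant maps are stationary. Moreover, $\mathrm{ev}$ pulls back to $\varrho$, so $\mathrm{ev}$ is a version of the Radon--Nikodym cocycle of $\rho_{\ast}\mu$, because Radon--Nikodym derivatives of a factor are the conditional expectations of those upstairs, and these are already measurable with respect to the factor.

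For measurability, the compact-open topology on $\mathbb{H}$ is generated by the evaluations $u\mapsto u\left(g\right)$ for $g$ in a countable dense set $D\subset G$. This uses the equicontinuity on compacts established in the proof of Proposition~\ref{prop:Hcompact}, so pointwise convergence on $D$ implies uniform convergence on compacts. Since $x\mapsto u_{x}\left(g\right)=\int\psi_{h}\left(x\right)\varphi\left(g^{-1}h\right)dm_{G}\left(h\right)$ is measurable for each $g$, $\rho$ is Borel. The same remark gives the factor identification. The Borel structure of $\mathbb{H}$ is generated by $\{\mathrm{ev}_{g}:g\in D\}$, so $\rho^{-1}\left(\mathrm{Borel}\right)$ is generated by $\{\varrho_{g}:g\in D\}$. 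Conversely, each $\varrho_{g}$ with $g\in G$ arbitrary is, on $X^{\ast}$, a pointwise limit of $\varrho_{g_{n}}$ with $g_{n}\in D$, by continuity of $u_{x}$. Hence the two $\sigma$-algebras agree modulo null sets.

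The main obstacle is equivariance. We need $u_{g.x}\left(h\right)=u_{x}\left(g^{-1}h\right)/u_{x}\left(g^{-1}\right)$ for all $h$ and for $\mu$-a.e. $x$, for each fixed $g$. Here I would use the strict version $\varpi$ and $\psi_{h}\left(x\right)=\varpi_{h^{-1}}\left(x\right)$. The strict cocycle identity gives, for every $x$ and $h$,
\[\psi_{h}\left(g.x\right)=\varpi_{h^{-1}}\left(g.x\right)=\varpi_{h^{-1}g}\left(x\right)/\varpi_{g}\left(x\right)=\psi_{g^{-1}h}\left(x\right)/\psi_{g^{-1}}\left(x\right).\]
Since $u_{x}=\psi_{\left(\cdot\right)}\left(x\right)$ holds $m_{G}$-a.e. for $x\in X^{\ast}$, and $g.X^{\ast}$ is $\mu$-conull by quasi-invariance, both sides of the desired identity agree $m_{G}$-a.e. in $h$ for $\mu$-a.e. $x$. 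The remaining issue is the normalizing constant $\psi_{g^{-1}}\left(x\right)$ versus $u_{x}\left(g^{-1}\right)$. These agree $\mu$-a.e. for fixed $g$, which is exactly the Fubini statement preceding the definition of $X^{\ast}$. Both sides are continuous in $h$, so a.e. equality upgrades to equality everywhere. This gives equivariance almost surely for each $g$.

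If a strictly equivariant map on an invariant conull set is wanted, I would restrict to the $G$-invariant Borel set of $x$ for which the identity holds for all $g$. Fubini plus the fact that the set of good $g$ is a conull subgroup, as in Lemma~\ref{lem:ergdecompacim}, shows this set is conull.
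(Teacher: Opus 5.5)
Your proof is correct and is essentially the paper's. Both use the same map $x\mapsto u_{x}$, prove equivariance by establishing $u_{g.x}(h)=u_{x}(g^{-1}h)/u_{x}(g^{-1})$ on a large set of $h$ and upgrading by continuity, and identify the factor via evaluations. There are two minor differences. You get the identity for $m_{G}$-a.e.\ $h$ from the strict cocycle $\varpi$, while the paper uses the almost-cocycle identity of $\varrho$ on a countable dense set $D$, so it needs neither $\varpi$ nor the explicit formula for $u_{x}$. Also, your equicontinuity argument actually justifies the ``standard fact'' about the Borel structure of $\mathbb{H}(G,\theta)$, which the paper only quotes.

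Your optional last paragraph does not work as written. The set $\{g:\rho(g.x)=g.\rho(x)\}$ is not a subgroup: closure under products would need equivariance at $g.x$, not at $x$. Moreover, $X^{\ast}$ is not $G$-invariant, since $u_{k.x}(e_{G})=u_{x}(k^{-1})/\psi_{k^{-1}}(x)$. To get strict equivariance you should instead normalize, using $x\mapsto u_{x}/u_{x}(e_{G})$ on $G.X^{\ast}$. None of this is needed for the statement.
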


\begin{proof}[Proof of Proposition~\ref{prop:RNfact}]
Using Lemma~\ref{lem:goodRN}, fix a \(\theta\)-harmonic version \(\varrho\) of the Radon--Nikodym cocycle, with its associated \(\mu\)-conull set \(X^{\ast}\) as in Definition~\ref{dfn:goodRN}, and define the map
\[\rho:X^{\ast}\to\mathbb{H}\left(G,\theta\right),\quad\rho\left(x\right)\left(g\right):=u_{x}\left(g\right)=\varrho_{g^{-1}}\left(x\right).\]
Let us show that \(\rho\) is \(G\)-equivariant as a factor map, meaning that for every \(g\in G\),
\[\rho\left(g.x\right)=g.\rho\left(x\right)\text{ for }\mu\text{-a.e. }x\in X.\]
Fix \(g\in G\). Let \(D\) be some countable dense subset in \(G\). Since \(\varrho\) is a version of the Radon--Nikodym cocycle, for each \(h\in D\) we have the cocycle identity
\[\varrho_{h^{-1}g}\left(x\right)=\varrho_{h^{-1}}\left(g.x\right)\cdot \varrho_{g}\left(x\right)\text{ for }\mu\text{-a.e. }x\in X.\]
Intersecting over \(h\in D\), we obtain a \(\mu\)-conull set \(X_{g}\) with
\[\varrho_{h^{-1}}\left(g.x\right)=\tfrac{\varrho_{h^{-1}g}\left(x\right)}{\varrho_{g}\left(x\right)}\text{ for all }\left(h,x\right)\in D\times X_{g}.\]
Put \(X_{g}^{\prime}:=X^{\ast}\cap X_{g}\cap g^{-1}.X^{\ast}\). Then for every \(\left(h,x\right)\in D\times X_{g}^{\prime}\),
\[u_{g.x}\left(h\right)=\varrho_{h^{-1}}\left(g.x\right)=\tfrac{\varrho_{h^{-1}g}\left(x\right)}{\varrho_{g}\left(x\right)}=\tfrac{u_{x}\left(g^{-1}h\right)}{u_{x}\left(g^{-1}\right)}=g.u_{x}\left(h\right).\]
Both functions \(h\mapsto u_{g.x}\left(h\right)\) and \(h\mapsto g.u_{x}\left(h\right)\) are continuous and coincide on \(D\), hence \(u_{g.x}=g.u_{x}\), namely \(\rho\left(g.x\right)=g.\rho\left(x\right)\)
for every \(x\in X_{g}^{\prime}\). Finally, it is a standard fact that \(\mathbb{H}\left(G,\theta\right)\) with the compact-open topology has the Borel \(\sigma\)-algebra \(\mathcal{B}\left(\mathbb{H}\left(G,\theta\right)\right)\) generated by the evaluation maps \(\left\{\mathrm{ev}_{g}:g\in G\right\}\), hence
\[\rho^{-1}\left(\mathcal{B}\left(\mathbb{H}\left(G,\theta\right)\right)\right)
=\sigma\left(\mathrm{ev}_{g}\circ\rho:g\in G\right)
=\sigma\left(\varrho_{g}:g\in G\right),\]
which is the Radon--Nikodym sub-\(\sigma\)-algebra.
\end{proof}

\subsection{A universal compact Radon--Nikodym model theorem}

We shall now prove Theorem~\ref{mthm:compactRNmodel}.

\begin{proof}[Proof of Theorem~\ref{mthm:compactRNmodel}]
Let us start by defining the model. Recall the compact \(G\)-space \(\mathbb{H}\left(G,\theta\right)\) with its continuous point evaluation cocycle \(\mathrm{ev}\). Using the Mackey--Varadarajan compact model theorem~\ref{thm:Vara}, fix a universal compact \(G\)-space \(\mathbb{Y}\left(G\right)\). Define the compact \(G\)-space
\begin{equation}\label{eq:univmod}
\mathbb{G}\left(G,\theta\right):=\mathbb{H}\left(G,\theta\right)\times\mathbb{Y}\left(G\right),
\end{equation}
with the diagonal action, and with the evaluation cocycle on the first coordinate,
\begin{equation}\label{eq:univcoc}
\mathfrak{g}:G\times\mathbb{G}\left(G,\theta\right)\to\mathbb{R}_{>0},\quad\mathfrak{g}_{g}\left(u,y\right):=\mathrm{ev}_{g}\left(u\right)=u\left(g^{-1}\right).
\end{equation}

For the rest of the proof we will show that this is indeed a universal compact Radon--Nikodym model. Given a stationary \(\left(G,\theta\right)\)-space \(\left(X,\mu\right)\), let \(\rho:X\to\mathbb{H}\left(G,\theta\right)\) be a topological realization of the Radon--Nikodym factor as in Proposition~\ref{prop:RNfact}, and pick a compact model \(\kappa:X\to \mathbb{Y}\left(G\right)\) for \(X\) as a Borel \(G\)-space using Theorem \ref{thm:Vara}. Define the \(G\)-equivariant map
\[\iota:X\to\mathbb{G}\left(G,\theta\right),\quad \iota\left(x\right):=\left(\rho\left(x\right),\kappa\left(x\right)\right).\]
Then \(\iota\) is injective since \(\kappa\) is, and so with the measure \(\widehat{\mu}=\iota_{\ast}\mu\), we obtain that \(\big(\mathbb{G}\left(G,\theta\right),\widehat{\mu}\big)\) is a stationary \(\left(G,\theta\right)\)-space, and \(\iota:\left(X,\mu\right)\hookrightarrow\big(\mathbb{G}\left(G,\theta\right),\widehat{\mu}\big)\) is a \(G\)-equivariant measure preserving embedding. We complete the proof by showing that \(\mathfrak{g}\) is a version of the Radon--Nikodym cocycle of \(\left(\mathbb{G}\left(G,\theta\right),\widehat{\mu}\right)\), that is, for every \(g\in G\) it holds that
\begin{equation}\label{eq:RNid}
\tfrac{dg^{-1}\widehat{\mu}}{d\widehat{\mu}}\left(u,y\right)=u\left(g^{-1}\right)=\mathfrak{g}_{g}\left(u\right)\text{ for }\widehat{\mu}\text{-a.e. }\left(u,y\right)\in\mathbb{G}\left(G,\theta\right).
\end{equation}
Fix \(g\in G\). We start by proving that
\begin{equation}\label{eq:RNid1}
\varrho_{g}\left(\iota^{-1}\left(u,y\right)\right)=\mathfrak{g}_{g}\left(u\right)\,\,\text{ for }\widehat{\mu}\text{-a.e. }\left(u,y\right)\in\mathbb{G}\left(G,\theta\right);
\end{equation}
indeed, for every \(\left(u,y\right)\in\iota\left(X\right)\), which is a \(\widehat{\mu}\)-conull set, there is a unique \(x\in X\) such that
\[\left(u,y\right)=\iota\left(x\right)=\left(\rho\left(x\right),\kappa\left(x\right)\right)=\left(u_{x},\kappa\left(x\right)\right),\]
and therefore
\[\varrho_{g}\big(\iota^{-1}\left(u,y\right)\big)=\varrho_{g}\left(x\right)=u_{x}\left(g^{-1}\right)=u\left(g^{-1}\right)=\mathfrak{g}_{g}\left(u\right),\]
establishing \eqref{eq:RNid1}. It follows that for every \(f\in L^{1}\left(\widehat{\mu}\right)\),
\begin{align*}
\int_{\mathbb{G}\left(G,\theta\right)}f\left(g^{-1}.\left(u,y\right)\right)d\widehat{\mu}\left(u,y\right)
&=\int_{X}f\big(g^{-1}.\iota\left(x\right)\big)d\mu\left(x\right)
=\int_{X}f\big(\iota\left(g^{-1}.x\right)\big)d\mu\left(x\right)\\
&=\int_{X}f\big(\iota\left(x\right)\big)\varrho_{g}\left(x\right)d\mu\left(x\right)
=\int_{\iota\left(X\right)}f\left(u,y\right)\varrho_{g}\big(\iota^{-1}\left(u,y\right)\big)d\widehat{\mu}\left(u,y\right)\\
&=\int_{\iota\left(X\right)}f\left(u,y\right)\mathfrak{g}_{g}\left(u\right)d\widehat{\mu}\left(u,y\right)
=\int_{\mathbb{G}\left(G,\theta\right)}f\left(u,y\right)\mathfrak{g}_{g}\left(u\right)d\widehat{\mu}\left(u,y\right).
\end{align*}
where \(\iota^{-1}\) is well-defined on the \(\widehat{\mu}\)-conull set \(\iota\left(X\right)\). Then \eqref{eq:RNid} readily follows.
\end{proof}

\begin{proof}[Proof of Corollary~\ref{corr:compactRNmodel}]
When \(\left(G,\theta\right)\) is such that \(\varphi:=d\theta/dm_{G}\in L_{\mathrm{c}}^{p}\left(G\right)\) for some \(1<p\leq+\infty\), Harnack's inequality holds by Theorem~\ref{mthm:Harnack}, and thus \(\mathfrak{m}_{\theta}\) is finite valued. By Proposition~\ref{prop:mtheta}(2), once \(\mathfrak{m}_{\theta}\) is finite-valued then it is further locally bounded, and thus we found that \(\mathfrak{m}_{\theta}\) is bounded on the support of \(\theta\). Therefore \(\mathbb{E}_{\theta}\left[\mathfrak{m}_{\theta}\right]<+\infty\), so the proof is concluded by Theorem~\ref{mthm:compactRNmodel}.
\end{proof}

\bibliographystyle{acm}
\bibliography{References}

\end{document}